\documentclass{amsart}

\usepackage[main=english,french]{babel}
\usepackage{amscd,amsmath,amssymb,amsfonts,bbm, calligra, xspace}
\usepackage[T1]{fontenc}
\usepackage{lmodern}
\usepackage{mathtools}
\usepackage{enumerate, enumitem}
\usepackage{multicol}
\usepackage[all]{xy}
\usepackage{mathrsfs}
\usepackage{footmisc}
\usepackage{graphicx}
\usepackage{microtype}
\usepackage{tikz-cd}
\usepackage{quiver}
\usetikzlibrary{babel}
\usepackage{euscript}
\usepackage{stackrel}

\usepackage{xcolor}
\definecolor{darkblue}{rgb}{0,0,0.8}
\definecolor{darkgreen}{rgb}{0,0.4,0}
\usepackage[colorlinks=true,linkcolor=darkblue, citecolor=darkgreen,  urlcolor=darkgreen, unicode,pdfborder={0 0 0},final]{hyperref}

\newtheorem{thm}{Theorem}[section]
\newtheorem{prop}[thm]{Proposition}

\newtheorem{lem}[thm]{Lemma}
\newtheorem{cor}[thm]{Corollary}

\theoremstyle{definition}

\newtheorem{quest}[thm]{Question}
\newtheorem{quests}[thm]{Questions}

\theoremstyle{remark}
\newtheorem{rem}[thm]{Remark}

\newtheorem{rems}[thm]{Remarks}

\newtheorem{Step}{Step}

\numberwithin{equation}{section}

\newcommand{\ci}{\mathcal{C}^{\infty}}

\newcommand{\Zb}{\mathbb{Z}}
\newcommand{\Rb}{\mathbb{R}}

\newcommand{\Cb}{\mathbb{C}}
\newcommand{\Qb}{\mathbb{Q}}
\newcommand{\Gb}{\mathbb{G}}
\newcommand{\Lb}{\mathbb{L}}
\newcommand{\Pb}{\mathbb{P}}
\newcommand{\Abb}{\mathbb{A}}

\newcommand{\Oc}{\mathcal{O}}

\newcommand{\CH}{\mathrm{CH}}
\newcommand{\Hr}{\mathrm{H}}

\newcommand{\Kbf}{\mathbf{K}}
\newcommand{\Ibf}{\mathbf{I}}
\newcommand{\Ir}{\mathrm{I}}

\newcommand{\Wr}{\mathrm{W}}
\newcommand{\et}{\textup{\'et}}

\newcommand{\Wbf}{\mathbf{W}}

\newcommand{\Lc}{\mathcal{L}}

\newcommand{\Hbf}{\mathbf{H}}

\newcommand{\Hsc}{\mathscr{H}}

\newcommand{\Fc}{\mathcal{F}}

\newcommand{\nr}{\mathrm{nr}}

\newcommand{\Ec}{\mathcal{E}}

\newcommand{\ovarepsilon}{\overline{\varepsilon}}

\DeclareMathOperator{\im}{im}

\DeclareMathOperator{\Id}{Id}

\DeclareMathOperator{\Spec}{Spec}

\DeclareMathOperator{\colim}{colim}

\DeclareMathOperator{\ord}{ord}

\DeclareMathOperator{\pt}{pt}
\DeclareMathOperator{\sign}{sign}

\DeclareMathOperator{\Pic}{Pic}

\DeclareMathOperator{\Gal}{Gal}
\DeclareMathOperator{\ev}{ev}
\DeclareMathOperator{\oev}{\overline{ev}}

\DeclareMathOperator{\cl}{cl}

\renewcommand{\Im}{\operatorname{Im}}

\makeatletter 

\newcommand*\Neginternal[3]{\mathpalette\Neg@{{#1}{#2}{#3}}}
\newcommand*\Neg@[2]{\Neg@@{#1}#2}
\newcommand*\Neg@@[4]{%
  \mathrel{\ooalign{%
    $\m@th#1#4$\cr
    \hidewidth$\m@th#3{#1}\mkern\muexpr#2*2$\hidewidth\cr
  }}%
}

\newcommand*\negslash[1]{\m@th#1\not\mathrel{\phantom{=}}}
\newcommand*\snegslash[1]{\rotatebox[origin=c]{60}{$\m@th#1-$}}
\newcommand*\ssnegslash[1]{\rotatebox[origin=c]{60}{$\m@th#1{\dabar@}\mkern-7mu{\dabar@}$}}
\newcommand*\sssnegslash[1]{\rotatebox[origin=c]{60}{$\m@th#1\dabar@$}}
\makeatother  

\makeatletter
\DeclareFontFamily{OMX}{MnSymbolE}{}
\DeclareSymbolFont{MnLargeSymbols}{OMX}{MnSymbolE}{m}{n}
\SetSymbolFont{MnLargeSymbols}{bold}{OMX}{MnSymbolE}{b}{n}
\DeclareFontShape{OMX}{MnSymbolE}{m}{n}{
    <-6>  MnSymbolE5
   <6-7>  MnSymbolE6
   <7-8>  MnSymbolE7
   <8-9>  MnSymbolE8
   <9-10> MnSymbolE9
  <10-12> MnSymbolE10
  <12->   MnSymbolE12
}{}
\DeclareFontShape{OMX}{MnSymbolE}{b}{n}{
    <-6>  MnSymbolE-Bold5
   <6-7>  MnSymbolE-Bold6
   <7-8>  MnSymbolE-Bold7
   <8-9>  MnSymbolE-Bold8
   <9-10> MnSymbolE-Bold9
  <10-12> MnSymbolE-Bold10
  <12->   MnSymbolE-Bold12
}{}

\let\llangle\@undefined
\let\rrangle\@undefined
\DeclareMathDelimiter{\llangle}{\mathopen}%
                     {MnLargeSymbols}{'164}{MnLargeSymbols}{'164}
\DeclareMathDelimiter{\rrangle}{\mathclose}%
                     {MnLargeSymbols}{'171}{MnLargeSymbols}{'171}
\makeatother

\newcommand{\isoto}{\myxrightarrow{\,\sim\,}}
\makeatletter
\def\myrightarrow{{\setbox\z@\hbox{$\rightarrow$}\dimen0\ht\z@\multiply\dimen0 6\divide\dimen0 10\ht\z@\dimen0\box\z@}}
\def\myrightarrowfill@{\arrowfill@\relbar\relbar\myrightarrow}
\newcommand{\myxrightarrow}[2][]{\ext@arrow 0359\myrightarrowfill@{#1}{#2}}
\makeatother

\makeatletter
\newcommand{\extp}{\@ifnextchar^\@extp{\@extp^{\,}}}
\def\@extp^#1{\mathop{\bigwedge\nolimits^{\!#1}}}
\makeatother

\def\loccit{\emph{loc}.\kern3pt \emph{cit}.{}\ }
\def\eg{e.g.\kern.3em}

\DeclareRobustCommand{\resp}{\text{resp.}\kern.2em}

\begin{document}

\author{Olivier Benoist}
\address{DMA, École normale supérieure, Université PSL, CNRS, 75005 Paris, France}
\email{olivier.benoist@ens.fr}

\author{Samuel Lerbet}
\address{DMA, École normale supérieure, Université PSL, CNRS, 75005 Paris, France}
\email{samuel.lerbet@ens.psl.eu}

\begin{abstract}
Let $X$ be a smooth algebraic variety over $\Rb$. We study the separation of the connected components of $X(\Rb)$ by equivariant cohomology, unramified cohomology, and signatures of unramified quadratic forms. Our results highlight how the geometry of $X$ influences these questions. We apply them to cubic hypersurfaces, complete intersections of two quadrics, abelian varieties, and Enriques surfaces. We also obtain new bounds on the exponent of the cokernel of higher signature morphisms in $\Ibf$-cohomology and investigate the optimality of these bounds for smooth projective varieties.
\end{abstract}

\title[Separating components by cohomological invariants]{Separating components of real algebraic varieties by cohomological invariants}


\maketitle

\section{Introduction}

Let $X$ be a smooth algebraic variety of pure dimension $d$ over the field $\Rb$ of real numbers. Its real locus $X(\Rb)$ is a (possibly empty) $\ci$ manifold of dimension~$d$. It may have several connected components, even if $X$ is connected. 

Let $A(X)\subset\Rb(X)^{\times}$ be the multiplicative group of rational functions on $X$ that are regular and nonzero at all real points of $X$. Consider the morphism
\begin{equation}
\label{signmap}
A(X)\to \Hr^0(X(\Rb),\Zb/2)
\end{equation}
associating with $f\in A(X)$ and $x\in X(\Rb)$ the value $0$ if $f(x)>0$ and the value~$1$ if~$f(x)<0$. When $X$ is projective, the Stone--Weierstrass theorem implies that the morphism \eqref{signmap} is surjective (see e.g.\,\cite[Satz a)]{brockerReelleDivisoren1980}). 
In other words, one can prescribe the signs of an element of $A(X)$ on the connected components of~$X(\Rb)$.

In this article, we study similar questions, where $A(X)$ is replaced by equivariant cohomology groups (see \S\ref{parcoho}), unramified cohomology groups (see \S\ref{parunr}), or~$\Ibf$\nobreakdash-cohomology groups such as the unramified Witt group of $X$ (see \S\ref{parWitt}), and~\eqref{signmap} is replaced by evaluation at real points (in the first two cases) or signature morphisms (in the latter case). In other words, can one prescribe the values at real points of cohomology or unramified cohomology classes of~$X$? Or the signatures at real points of unramified quadratic forms over~$X$? 

We are primarily interested in the case where $X$ is projective, our goal being to understand the influence of the geometry of $X$ on these questions.

\subsection{Equivariant cohomology}
\label{parcoho}

Set $G:=\Gal(\Cb/\Rb)$. If $M$ is a $G$-module (or a bounded complex of $G$-modules), let~$\Hr^*_G(-,M)$ denote $G$-equivariant sheaf cohomology with values in $M$. We often take~${M=\Zb/2}$ (with trivial $G$-action) or~$M=\Zb(j)$ (the abelian group~$\Zb$ on which complex conjugation acts by~$(-1)^j$). If~$X$ is an algebraic variety over~$\Rb$, one can endow $X(\Cb)$ with a natural action of $G$. The groups $\Hr^*_G(X(\Cb),M)$ then play the same role, in real algebraic geometry, as the usual Betti cohomology groups in complex algebraic geometry (see e.g.\,\cite{krasnovEquivariantGrothendieckCohomology1994,vanHamelAlgebraicCyclesTopology2000,benoistIntegralHodgeConjecture2020}).

As $\Hr^j_G(\pt,\Zb/2)=\Zb/2$ (for $j\geq 0$) and~$\Hr^j_G(\pt,\Zb(j))=\Zb/2$ (for $j\geq 1$), evaluation at real points gives rise to morphisms
\begin{equation}
\label{evalcoho}
\begin{aligned}
\oev_X^j&:\Hr^j_G(X(\Cb),\Zb/2)\to \Hr^0(X(\Rb),\Zb/2)\quad\text{for }j\geq 0,\\
\text{and}\quad
\ev_X^j&:\Hr^j_G(X(\Cb),\Zb(j))\to \Hr^0(X(\Rb),\Zb/2)\quad\text{for }j\geq 1
\end{aligned}
\end{equation}
(see \S\ref{parrecollections}). 
We wish to understand when the morphisms \eqref{evalcoho} are surjective. Since the reduction mod $2$ morphisms $\Hr^j_G(X(\Cb),\Zb(j))\to \Hr^j_G(X(\Cb),\Zb/2)$ and the Bockstein morphisms $\Hr^j_G(X(\Cb),\Zb/2)\to\Hr^{j+1}_G(X(\Cb),\Zb(j+1))$ are compatible with evaluation maps, the surjectivity of $\ev_X^j$ implies the surjectivity of $\oev_X^j$, which in turn implies the surjectivity of $\ev_X^{j+1}$. We note that $\oev_X^j$ and $\ev_X^j$ are always surjective if~$X(\Rb)$ is connected, as the compositions ${\Hr^j_G(\pt,\Zb/2)\to \Hr^j_G(X(\Cb),\Zb/2)\xrightarrow{\oev_X^j} \Hr^0(X(\Rb),\Zb/2)}$ and $\Hr^j_G(\pt,\Zb(j))\to \Hr^j_G(X(\Cb),\Zb(j))\xrightarrow{\ev_X^j} \Hr^0(X(\Rb),\Zb/2)$
are surjective.

The following result is due to Krasnov \cite[Corollary 3.2]{krasnovEquivariantGrothendieckCohomology1994} in the projective case and to van Hamel \cite[Theorem 2.8]{vanHamelTorsionZeroCycles2000} in general (see also \cite[Remark~1.6\,(ii)]{benoistIntegralHodgeConjecture2020}).

\begin{thm}[Krasnov, van Hamel]
\label{thKvH}
Let $X$ be a smooth variety of dimension $d$ over $\Rb$. Then $\oev^d_X$ is surjective.
\end{thm}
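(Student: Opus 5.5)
The plan is to construct, for each connected component $C$ of $X(\Rb)$, a class $\alpha_C\in\Hr^d_G(X(\Cb),\Zb/2)$ whose evaluation is the indicator function of $C$. Everything rests on the structure of equivariant cohomology near the fixed locus: $X(\Rb)$ is the fixed locus of $G$ acting on $X(\Cb)$, and a neighbourhood of it is modelled on the normal bundle. That normal bundle is $iT_{X(\Rb)}\cong T_{X(\Rb)}$, of real rank $d$, with $G$ acting by $-1$ on the fibres.

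\textbf{Step 1: the equivariant Thom/Gysin class.} Take the equivariant Gysin (pushforward) map
\[
\Hr^0_G(X(\Rb),\Zb/2)\to \Hr^d_G(X(\Cb),\Zb/2)
\]
attached to the closed $G$-stable submanifold $X(\Rb)$ of codimension $d$. With $\Zb/2$ coefficients no orientation is needed. Concretely, the map is the composite of the Thom isomorphism $\Hr^0_G(C,\Zb/2)\cong\Hr^d_G(X(\Cb),X(\Cb)\setminus C,\Zb/2)$ with the forgetful map to $\Hr^d_G(X(\Cb),\Zb/2)$. Since $X(\Rb)$ is closed, no compactness is needed: the support condition is relative to the complement. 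Let $\alpha_C$ be the image of $1_C$.

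\textbf{Step 2: computing the restriction to a real point.} The key computation is that for $x\in C$, restricting $\alpha_C$ to $\Hr^d_G(\{x\},\Zb/2)=\Hr^d(BG,\Zb/2)=\Zb/2$ gives the equivariant Euler class, that is, the top Stiefel--Whitney class, of the normal representation. That representation is $d$ copies of the sign representation, so its equivariant $w_d$ is $u^d$, where $u$ generates $\Hr^1(BG,\Zb/2)$. Since $u^d\neq0$, the evaluation of $\alpha_C$ at $x$ is $1$. For $x$ in another component $C'$, the class $\alpha_C$ restricts to $0$ near $x$, because it is supported on a neighbourhood of $C$ disjoint from $C'$. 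Hence $\oev^d_X(\alpha_C)=1_C$, and sums of the $\alpha_C$ give every element of $\Hr^0(X(\Rb),\Zb/2)$. If there are infinitely many components, which cannot happen for algebraic varieties, one would sum locally finitely, but finiteness holds anyway.

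\textbf{Step 3: compatibility checks.} It remains to check that the evaluation map of \S\ref{parrecollections} agrees with restriction to $\Hr^d_G$ of a point followed by the identification $\Hr^d_G(\pt,\Zb/2)=\Zb/2$. It must also be checked that the self-intersection formula (Gysin followed by restriction equals multiplication by the Euler class) holds equivariantly; this follows from the equivariant tubular neighbourhood theorem for the smooth $G$-manifold $X(\Cb)$. I expect Step 2 to be the main obstacle, specifically the identification of the equivariant Euler class of $iT_{X(\Rb)}$ at a point with $u^d$. It amounts to the standard fact that the mod $2$ Euler class of the sign representation $\Rb_-$ of $G$ is $u$, together with multiplicativity over the $d$ sign summands.
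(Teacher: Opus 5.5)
Your proposal is correct and takes essentially the route the paper relies on. The paper cites Krasnov and van Hamel, but the mechanism it actually uses, in the proof of Theorem~\ref{thm:functorial_Krasnov} via \cite[Remark~1.6\,(ii)]{benoistIntegralHodgeConjecture2020}, is your class $\alpha_C$: the equivariant fundamental class $\overline{\sigma}_C\in\Hr^d_G(X(\Cb),\Zb/2)$ of a connected component $C$. Its restriction to $X(\Rb)$ is $1_C\smile w_X$ in the canonical decomposition, and your Step~2 computation of the coefficient $u^d$ is exactly the statement that its degree-$0$ part is the indicator function of $C$.
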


Our first goal is to investigate the optimality of Theorem \ref{thKvH}.

\begin{thm}[see Propositions \ref{propno2torsion} and \ref{propva} and Corollaries \ref{corocubic} and \ref{corevd}]\,
\label{th1}
\begin{enumerate}[label=(\roman*)] 
\item 
\label{cohoi}
Let $X$ be a smooth variety of dimension $d\geq 1$ over $\Rb$. If $\Hr^{d+1}(X(\Cb),\Zb)[2]$ vanishes, then $\ev^d_X$ is surjective.
\item 
\label{cohoii}
For all $d\geq 1$, there exists a smooth projective variety $X$ of dimension $d$ over $\Rb$ such that $\Hr^{d+1}(X(\Cb),\Zb)[2]=0$ and $\oev^{d-1}_X$ is not surjective.
\item
\label{cohoiii}
For all $d\geq 2$, there exists a smooth projective variety $X$ of dimension $d$ over $\Rb$ such that $\ev^d_X$ is not surjective.
\end{enumerate}
\end{thm}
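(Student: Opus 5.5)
For \ref{cohoi}, I start from Theorem~\ref{thKvH}: given $\alpha\in\Hr^0(X(\Rb),\Zb/2)$, choose $\beta\in\Hr^d_G(X(\Cb),\Zb/2)$ with $\oev^d_X(\beta)=\alpha$. I then try to lift $\beta$ to $\Hr^d_G(X(\Cb),\Zb(d))$ modulo classes with vanishing evaluation. The obstruction is the Bockstein $\delta(\beta)\in\Hr^{d+1}_G(X(\Cb),\Zb(d+1))$, a $2$-torsion class.

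Two features of degree $d+1$ should kill it. First, $\Hr^{d+1}_G(X(\Cb),\Zb(d+1))$ above the real dimension $d$ is controlled by the real locus: for $j>2d$ equivariant cohomology equals $\bigoplus_i\Hr^i(X(\Rb),\Zb/2)$, and in the intermediate range one uses the Hochschild--Serre spectral sequence $\Hr^p(G,\Hr^q(X(\Cb),\Zb)(d+1))$. Second, the hypothesis $\Hr^{d+1}(X(\Cb),\Zb)[2]=0$, combined with the vanishing of $\Hr^q(X(\Cb),\Zb)$ for $q>2d$ (and the fact that $X(\Cb)$ has the homotopy type of a $d$-dimensional CW complex when $X$ is affine), should show that the part of $\delta(\beta)$ invisible on $X(\Rb)$ vanishes. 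The part visible on $X(\Rb)$ is handled by modifying $\beta$ by classes pulled back from $\Hr^d_G(\pt,\Zb/2)$ cupped with cycle classes of real points or curves, which can be arranged locally on each component. Concretely, I would restrict to $X(\Rb)$ via the decomposition $\Hr^{*}_G(X(\Rb),\Zb/2)=\bigoplus\Hr^i(X(\Rb),\Zb/2)$ and check that the Bockstein of the relevant degree-$0$ component is zero there, since $\Hr^0$ classes lift integrally. What remains is to show that the kernel of restriction to $X(\Rb)$ in degree $d+1$ has no contribution from $\delta(\beta)$. This uses the exact sequence of the pair $(X(\Cb),X(\Rb))$ and the identification $\Hr^{d+1}_G(X(\Cb),X(\Rb))\cong\Hr^{d+1}(X(\Cb)/G,X(\Rb))$, where $2$-torsion is governed by $\Hr^{d+1}(X(\Cb),\Zb)[2]$ via the transfer. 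This step is the main obstacle.

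For \ref{cohoii}, I use cubic hypersurfaces (Corollary~\ref{corocubic}). A smooth cubic of dimension $d$ has torsion-free cohomology, so the hypothesis holds. If $X(\Rb)$ has two components, which happens for $d=1$ and in suitable examples in higher dimension, I show that $\oev^{d-1}_X$ is not surjective by computing $\Hr^{d-1}_G$ through the Lefschetz hyperplane theorem: below the middle degree it comes from projective space, whose classes evaluate constantly on $X(\Rb)$. For $d=1$, I would instead use a plane cubic with two real components. There $\oev^0$ has image the constants, so it is not surjective.

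For \ref{cohoiii}, part \ref{cohoi} forces $2$-torsion in $\Hr^{d+1}(X(\Cb),\Zb)$. I take an Enriques surface, which has $\Hr^3(X(\Cb),\Zb)=\Zb/2$ and a disconnected real locus, or products of it with $\Pb^{d-2}$. I then find a pair of components separated by $\oev^d_X$ but not by $\ev^d_X$. The tool is a pairing argument: any integral class evaluates equally on two components exchanged by a suitable étale double cover structure. Equivalently, a Steenrod-square or Bockstein obstruction detects this. Finally, I reduce from $\Pb^{d-2}$-products to the surface case by the projective bundle formula (Corollary~\ref{corevd}).
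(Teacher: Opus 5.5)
The blind proof has genuine gaps in (i) and (iii); (ii) is essentially the paper's argument.

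\textbf{Part (ii).} Equivariant weak Lefschetz (Proposition~\ref{prop:weak_lefschetz}) makes $\oev^{d-1}_X$ factor through the connected $\Pb^{d+1}(\Rb)$. You still need to justify it, e.g.\ by comparing Hochschild--Serre spectral sequences. You also owe the existence of disconnected real cubics in every dimension; taking $X=E^d$ as in Proposition~\ref{propva} sidesteps this.

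\textbf{Part (i).} You rightly sense that the Krasnov--van Hamel lift $\beta$ must be corrected by a class with vanishing evaluation. But you leave exactly this step open (your ``main obstacle''), and the tools you suggest do not supply it.
\begin{itemize}
\item $X$ is not assumed affine.
\item The transfer cannot make $\Hr^{d+1}(X(\Cb),\Zb)[2]$ control $2$-torsion in (relative) equivariant cohomology. Restriction followed by transfer is multiplication by $2$, and $2$-torsion equivariant classes can restrict to zero on $X(\Cb)$.
\item The obstruction to lifting to $\Zb(d)$-coefficients lies in $\Hr^{d+1}_G(X(\Cb),\Zb(d))$, not in the $\Zb(d+1)$-twist.
\end{itemize}
The missing idea is to use both twists at once. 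The exact sequence $0\to\Zb[G]\to\Zb(d)\oplus\Zb(d+1)\to\Zb/2\to0$, together with \eqref{Shapiro}, gives a connecting map $\Hr^d_G(X(\Cb),\Zb/2)\to\Hr^{d+1}(X(\Cb),\Zb)$. This map vanishes under the hypothesis, because its source is $2$-torsion. Hence $\beta$ is the reduction of some $(b,c)\in\Hr^d_G(X(\Cb),\Zb(d))\oplus\Hr^d_G(X(\Cb),\Zb(d+1))$. Since $\Hr^d(G,\Zb(d+1))=0$, the reduction $\bar{c}$ is precisely the correcting class, and $\ev^d_X(b)=\oev^d_X(\beta)$.

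\textbf{Part (iii), $d\geq 3$.} Your construction fails for every $d\geq 3$. Take $X=S\times\Pb^{d-2}$. The projective bundle formula gives $\Hr^d_G(X(\Cb),\Zb(d))=\bigoplus_k\Hr^{d-2k}_G(S(\Cb),\Zb(d-k))\,h^k$. The class $h$ vanishes at real points because $\Hr^2(G,\Zb(1))=0$. So the image of $\ev^d_X$ is just the pullback of the image of $\ev^d_S$. That image is everything, since $d>\dim S$: surjectivity of $\oev^2_S$ (Theorem~\ref{thKvH}) propagates to $\ev^3_S$, $\oev^3_S$, and so on. The projective bundle formula thus proves the opposite of what you wanted.

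\textbf{Part (iii), $d=2$.} Disconnectedness of $S(\Rb)$ is not enough. By Corollary~\ref{corEnriques}, $\ev^2_S$ is surjective unless one half of $S$ is empty and some component $\Pi_S$ has odd Euler characteristic. The obstruction is a parity condition, not a statement about components exchanged by a double cover. The paper's argument runs as follows.
\begin{itemize}
\item Take $\alpha$ to be the class of the K3 cover that is surjective on real points, so $r^1_S(\alpha)=(0,w_{1,S})$.
\item Reductions $\ovarepsilon$ of integral classes satisfy $[\ovarepsilon]_1=0$.
\item Apply the orthogonality of $V^2_S$ and $V^1_S$ (Theorem~\ref{thm:image_of_total_restriction_to_the_real_locus}). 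An integral class evaluating to $1$ exactly on $\Pi_S$ would force $\deg(\sigma\smile w_{1,S}^2)=\chi(\Pi_S)\bmod 2$ to vanish, a contradiction.
\end{itemize}
To carry this obstruction up to dimension $d$, each extra factor must carry a class $\beta\in\Hr^1_G(E(\Cb),\Zb/2)$ with $r^1_E(\beta)=(0,[E(\Rb)])$. This is why the paper uses $S\times E^{d-2}$ with $E(\Rb)$ disconnected: $\Pb^1$ has no such class.
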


In~\ref{cohoii}, one can take $X$ to be a smooth cubic hypersurface, a smooth complete intersection of two quadrics or an abelian variety. The variety $X$ in \ref{cohoiii} can be chosen to be the product of an Enriques surface with~$d-2$ elliptic curves. In the case of Enriques surfaces, we find a necessary and sufficient condition for $\ev_X^2$ to be surjective (see Corollary~\ref{corEnriques}).

\subsection{Unramified cohomology}
\label{parunr}

Let $X$ be a smooth variety of dimension $d$ over~$\Rb$. Let $M$ be a $G$-module (or a bounded complex of $G$-modules). For $j\in\Zb$, let~$\Hsc^j_X(M)$ be the Zariski sheaf on~$X$ associated with the presheaf ${U\mapsto \Hr_G^j(U(\Cb),M)}$. The \emph{unramified cohomology groups} of~$X$ are the groups $\Hr^j_{\nr}(X,M):=\Hr^0(X,\Hsc^j_X(M))$ (see \cite{colliot-theleneRealComponentsAlgebraic1990,colliot-theleneZerocyclesCohomologyReal1996,vanHamelTorsionZeroCycles2000} and~\cite[\S5.1]{benoistIntegralHodgeConjecture2020}). Evaluation at real points again gives rise to morphisms
\begin{equation}
\begin{aligned}
\label{unreval}
\oev_{\nr,X}^j&:\Hr^j_{\nr}(X,\Zb/2)\to \Hr^0(X(\Rb),\Zb/2)\quad\text{for }j\geq 0,\\
\text{and}\quad
\ev_{\nr,X}^j&:\Hr^j_{\nr}(X,\Zb(j))\to \Hr^0(X(\Rb),\Zb/2)\quad\text{for }j\geq 1.
\end{aligned}
\end{equation}
Arguing as in \S\ref{parcoho} shows that the surjectivity of $\ev_{\nr,X}^j$ implies the surjectivity of~$\oev_{\nr,X}^j$, which in turn implies the surjectivity of $\ev_{\nr,X}^{j+1}$. In addition, making use of the natural morphisms $\Hr^j_G(X(\Cb),M)\to \Hr^j_{\nr}(X,M)$ shows that the surjectivity of~$\ev_X^j$ (\resp of $\oev_X^j$) implies the surjectivity of $\ev_{\nr,X}^j$ (\resp of $\oev_{\nr,X}^j$). 

Investigating the surjectivity of the morphisms \eqref{unreval} is harder than for \eqref{evalcoho}, since the unramified cohomology groups of $X$ depend not only on its topology but also on its cycle-theoretic properties. This line of research was initiated by Colliot-Th\'el\`ene and Parimala, who showed that $\oev^{j}_{\nr,X}$ is bijective for $j\geq d+1$ (see~\cite[Theorem~2.3.1]{colliot-theleneRealComponentsAlgebraic1990}). It follows from Theorem \ref{thKvH} that $\oev^d_{\nr,X}$ is surjective (answering a question raised in~\cite[Remark~2.4.4]{colliot-theleneRealComponentsAlgebraic1990}).

We contribute to this theme in two directions. First, we give geometric criteria depending only on the complex variety $X_{\Cb}$ for other evaluation maps ($\ev^d_{\nr,X}$, $\oev^{d-1}_{\nr,X}$, $\ev^{d-1}_{\nr,X}$ and $\oev^{d-2}_{\nr,X}$) to be surjective (see Corollary \ref{cortriangle2}). The images of $\oev^j_X$ and~$\oev_{\nr,X}^j$ (resp.\,of $\ev^j_X$ and $\ev_{\nr,X}^j$) are always equal when $j\leq 2$ (see Corollary~\ref{corpetitdegre}); our second contribution consists of criteria for these images to be equal when~$j=3$ (see Theorem~\ref{th3bis}). This can be used to obstruct the surjectivity of~$\oev_{\nr,X}^3$ (resp.\,of~$\ev_{\nr,X}^3$). 

The following theorem illustrates the applicability of these results.

\begin{thm}[Corollary \ref{corod=23} and Propositions \ref{propciquadrics}, \ref{propcubic} and \ref{propva2}]\,
\label{th4}
\begin{enumerate}[label=(\roman*)] 
\item 
\label{exai}
For $d\in\{2,3\}$, there exists a smooth projective variety $X$ of dimension $d$ over~$\Rb$ such that $\ev^d_{\nr,X}$ is not surjective.
\item 
\label{exaii}
Let $X$ be a smooth complete intersection of two quadrics of dimension $d$ over~$\Rb$ such that $X(\Rb)$ is not connected. If $d\in \{1,2,3\}$, then $\oev^{d-1}_{\nr,X}$ is not surjective. If~$d\geq 4$, then $\ev^{d-1}_{\nr,X}$ is surjective.
\item
\label{exaiii}
Let $X$ be a smooth cubic hypersurface of dimension $d$ over~$\Rb$ such that~$X(\Rb)$ is not connected. If $d\in\{1,2,3\}$, or if $d=4$ and $X$ is very general, then~$\oev^{d-1}_{\nr,X}$ is not surjective. If~$d\geq 5$, then~$\ev^{d-1}_{\nr,X}$ is surjective.
\item
\label{exaiv}
Let $X$ be an abelian variety of dimension~$d$ over $\Rb$ such that~$X(\Rb)$ has $2^d$ connected components. If $d\in\{1,2,3\}$, or if $d=4$ and $X$ is principally polarized and very general, then~$\oev^{d-1}_{\nr,X}$ is not surjective.
\end{enumerate}
\end{thm}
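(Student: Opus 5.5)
The plan is to reduce all four parts to two tools that the introduction announces. The first is the geometric criteria of Corollary~\ref{cortriangle2}, which give surjectivity of $\ev^d_{\nr,X}$, $\oev^{d-1}_{\nr,X}$, $\ev^{d-1}_{\nr,X}$, $\oev^{d-2}_{\nr,X}$ in terms of $X_{\Cb}$. The second is the comparison of images of $\oev^j_X$ and $\oev^j_{\nr,X}$ (and of $\ev^j_X$, $\ev^j_{\nr,X}$): these images agree for $j\le 2$ by Corollary~\ref{corpetitdegre}, and for $j=3$ under the hypotheses of Theorem~\ref{th3bis}. Non-surjectivity statements then come from non-surjectivity of the topological maps of Theorem~\ref{th1}, transported to unramified cohomology whenever the image comparison applies. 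Surjectivity statements in high dimension come from Corollary~\ref{cortriangle2}.

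For \ref{exai}, I would take $X$ an Enriques surface for $d=2$, with $\ev^2_X$ not surjective by Theorem~\ref{th1}\ref{cohoiii} and the Enriques criterion of Corollary~\ref{corEnriques}. Since $j=2$, Corollary~\ref{corpetitdegre} gives $\ev^2_{\nr,X}$ not surjective. For $d=3$, I would take the product of such an Enriques surface with an elliptic curve, for which $\ev^3_X$ is not surjective. Theorem~\ref{th3bis} should apply because a product of an Enriques surface and a curve has well-understood cycles: algebraic $\Hr^4$, with the Hodge/Tate-type conditions known in low dimension.

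For \ref{exaii}--\ref{exaiv} with $d\le 3$, the relevant degree is $j=d-1\le 2$, so Corollary~\ref{corpetitdegre} reduces the claim to non-surjectivity of $\oev^{d-1}_X$. That is exactly what the proof of Theorem~\ref{th1}\ref{cohoii} provides for these three families; see Corollaries~\ref{corocubic} and \ref{corevd} and Proposition~\ref{propva}. The required input is that $\Hr^{d-1}_G(X(\Cb),\Zb/2)$ is too small, essentially coming from the point and hyperplane classes, to separate components when $X(\Rb)$ is disconnected (or has $2^d$ components in the abelian case).

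For $d=4$, one has $j=3$, so Theorem~\ref{th3bis} is needed. Its hypotheses are presumably statements about the integral Hodge conjecture or algebraicity of classes in $\Hr^4(X(\Cb),\Zb)$, or about codimension-$2$ cycles, for $X_{\Cb}$. For a very general cubic fourfold, the Hodge classes in $\Hr^4$ are spanned by $h^2$. For a very general principally polarized abelian fourfold, they are spanned by $\theta^2/2$, which is algebraic since the minimal class is. In both cases the required comparison should hold, and non-surjectivity of $\oev^3_X$ transfers. This verification that Theorem~\ref{th3bis} applies in the very general case is the main obstacle. I would first identify the Hodge classes via Mumford--Tate and monodromy arguments, then check the $2$-torsion conditions on $\Hr^5(X(\Cb),\Zb)$ and on the Griffiths group.

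For the positive statements ($d\ge4$ for two quadrics, $d\ge5$ for cubics), I would invoke Corollary~\ref{cortriangle2}. Its criterion should amount to $X_{\Cb}$ having enough rational curves or being chow-trivial in low degrees. Concretely, $\CH_0$ and $\CH_1$ should be supported on small-dimensional subvarieties: $X_{\Cb}$ is covered by lines (respectively conics), with $\CH_1(X_{\Cb})=\Zb$ in these ranges. Coniveau arguments on $\Hr^{d+1}$ and $\Hr^{d}$ then yield surjectivity of $\ev^{d-1}_{\nr,X}$.
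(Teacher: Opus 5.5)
Your reduction scheme is the paper's: topological non-surjectivity is transported to unramified cohomology by Corollary~\ref{corpetitdegre} for $j\leq 2$ and by Theorem~\ref{th3bis} for $j=3$, and surjectivity comes from Corollary~\ref{cortriangle2}. The cases $d=2$ of \ref{exai} and $d\leq 3$ of \ref{exaii}--\ref{exaiv} work as you say. But wherever real work is needed you write ``should apply'', and the reasons you give are wrong at the critical points.

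In \ref{exai} with $d=3$, $\Hr^4(X(\Cb),\Zb)$ for $X=S\times E$ is \emph{not} algebraic. The K\"unneth piece $\Hr^3(S(\Cb),\Zb)\otimes\Hr^1(E(\Cb),\Zb)\simeq\Hr^1(E(\Cb),\Zb/2)$ consists of torsion Hodge classes. It is exactly the potential obstruction to the hypothesis of Theorem~\ref{th3bis}\,\ref{th3bisii}, which is the part you need since the claim concerns $\ev^3$: an algebraic class with nonzero component there would have no evident equivariant lift in $\ker(\ev^4_X)$. The paper takes $E$ very general, which you do not. Then, by Benoist--Ottem and transitivity of monodromy on the nonzero elements of $\Hr^1(E(\Cb),\Zb/2)$, no such class is algebraic. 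The remaining components lie in $\Hr^2(S(\Cb),\Zb(1))^G\otimes\Hr^2(E(\Cb),\Zb(1))\oplus\Hr^4(S(\Cb),\Zb(2))\otimes\Hr^0(E(\Cb),\Zb)$. This group is spanned by classes of cycles defined over $\Rb$: Krasnov's cycle class map is surjective as $\Hr^2(S,\Oc_S)=0$, and Hochschild--Serre applies as $\Hr^1(S(\Cb),\Zb)=0$ and $S(\Rb)\neq\varnothing$. The equivariant classes of such cycles vanish at real points.

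For $d=4$ you find the right Hodge classes but not the condition to verify. By Corollary~\ref{corth3bis} (Theorem~\ref{th3bis}\,\ref{th3bisi} plus $\pi_*\pi^*=2$), what is needed is that every Hodge class in $\Hr^4(X(\Cb),\Zb)$ lift to an equivariant class killed by $\ev^4_X$. Torsion in $\Hr^5$ and Griffiths groups are irrelevant. For cubics, $h^2$ does this. For the abelian fourfold, algebraicity of $\theta^2/2$ on $X_{\Cb}$ (which you assert without proof) would not suffice, since a cycle not defined over $\Rb$ gives no equivariant lift of $\theta^2/2$. The paper (Lemma~\ref{lemvapp}) notes that the statement is topological. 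It deforms, through the connected moduli of real ppav with $2^d$ real components, to $E^4$ with the product polarization. There $\theta^2/2$ is the class of the union of the six coordinate copies of $E^2$, which is defined over $\Rb$.

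Finally, Corollary~\ref{cortriangle2}\,\ref{coroiii} requires two inputs: $\Hr^{d+1}(X(\Cb),\Zb)[2]=0$ (from weak Lefschetz and Poincar\'e duality), and \emph{integral} coniveau $\geq 2$ of $\Hr^d(X(\Cb),\Zb)$. Your Chow-theoretic route (small $\CH_0$ and $\CH_1$, Bloch--Srinivas) only yields coniveau $\geq 2$ for $N\cdot\Hr^d(X(\Cb),\Zb)$ for some integer $N\geq 1$. That is useless if $N$ is even, since everything here is $2$-primary. The paper instead uses integral cylinder homomorphisms. For intersections of two quadrics, Reid's give coniveau $\lfloor d/2\rfloor$. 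For cubics, Lewis's plus a specialization argument give coniveau $\geq\lfloor\sqrt{6d+37}-6\rfloor$, which is $\geq 2$ exactly when $d\geq 5$.
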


The variety $X$ in \ref{exai} is an Enriques surface (if $d=2$), or the product of an Enriques surface with an elliptic curve (if $d=3$), as in Theorem \ref{th1} \ref{cohoiii}. In \ref{exaiii} and~\ref{exaiv}, the hypothesis that $X$ is very general means that the assertion holds for all~$X$ as in the statement outside a countable union of nowhere dense algebraic subvarieties of the relevant parameter or moduli space.

Proving that $\ev_{\nr,X}^j$ or $\oev_{\nr,X}^j$ is not surjective is much harder when $j\geq 4$, and we have not been able to devise an efficient criterion to do so, in the spirit of Theorem~\ref{th3bis} for $j=3$ (see Remark \ref{remH4nr}). We therefore leave open the following~questions.

\begin{quests}
\label{qnotsurj}
\begin{enumerate}[label=(\roman*), leftmargin=2em] 
\item 
\label{qi}
Fix $d\geq 4$. Does there exist a smooth projective variety $X$ of dimension $d$ over $\Rb$ such that $\ev^d_{\nr,X}$ is not surjective?
\item
\label{qii}
Fix $d\geq 5$. Does there exist a smooth projective variety $X$ of dimension $d$ over~$\Rb$ such that $\oev^{d-1}_{\nr,X}$ is not surjective?
\end{enumerate}
\end{quests}

In view of the examples of Theorem \ref{th4}, one might expect to answer Question~\ref{qnotsurj}\,\ref{qi} affirmatively by taking $X$ to be the product of an Enriques surface with an abelian variety, and Question \ref{qnotsurj} \ref{qii} with $X$ an abelian variety.

\subsection{\texorpdfstring{$\Ibf$}{I}-cohomology}
\label{parWitt}

Let $X$ be a smooth variety of dimension $d$ over~$\Rb$.

\subsubsection{Signature morphisms}

 The \textit{Witt group} $\Wr(X)$ of $X$ classifies families of quadratic forms parameterized by~$X$. It is defined as the group completion of the monoid of isometry classes of non\-degenerate quadratic forms on vector bundles over~$X$, modulo metabolic forms (\cite{Knebusch1977}, see also~\cite[\S 1.2.1]{Balmer2005}). Let $\Ir(X)\subset\Wr(X)$ be the \textit{fundamental ideal} of forms of even rank. 
Associating with~$\alpha\in \Wr(X)$ and~${x\in X(\Rb)}$ the signature of the real quadratic form~$\alpha|_x$ yields the \textit{signature morphism}
$\sign_X:\Wr(X)\to \Hr^0(X(\Rb),\Zb)$ (see \cite{Mahe1982}).
As a form of even rank has even signature, setting $\sign_{n,X}:=\frac{\sign_X}{2^n}$ for~$n\geq0$~yields
\begin{equation}
\label{signature1}
\sign_{n,X}:\Ir(X)^n \to \Hr^0(X(\Rb),\Zb).
\end{equation}
Let $\Wbf$ and $\Ibf^n$ be the Zariski sheaves on $X$ associated with the presheaves~${U\mapsto \Wr(U)}$ and~${U\mapsto \Ir(U)^n}$. Letting~$\iota:X(\Rb)\to X$ be the natural map, sheafifying \eqref{signature1} induces
\begin{equation}
\label{signature2}
\sign_n:\Ibf^n\to \iota_*\Zb.
\end{equation}

Let $\Lc$ be a line bundle on $X$ and let $\Zb(\Lc)$ be the orientation local system of the real line bundle $\Lc(\Rb)$.
Twisting \eqref{signature2} by $\Lc$ (in the sense of \cite[\S 3.B]{hornbostelRealCycleClass2021}) yields
\begin{equation}
\label{signature5}
\sign_n:\Ibf^n(\Lc)\to \iota_*\Zb(\Lc).
\end{equation}
Taking degree $c$ cohomology in \eqref{signature5} gives rise to \textit{higher signature morphisms}
\begin{equation}
\label{signature3}
\sign^c_{n,X}:\Hr^c(X,\Ibf^n(\Lc))\to \Hr^c(X(\Rb),\Zb(\Lc)).
\end{equation}

Setting~$n=c=0$ and $\Lc=\Oc_X$ in \eqref{signature3}, we get the signature morphism
\begin{equation}
\label{signature4}
\Wr_{\nr}(X)\to\Hr^0(X(\Rb),\Zb)
\end{equation}
on the \textit{unramified Witt group} $\Wr_{\nr}(X):=\Hr^0(X,\Wbf)$ of~$X$.

\subsubsection{Lower bounds on the image of the signature morphisms}

The problem of computing or controlling the image of \eqref{signature4} was first raised by Knebusch, who solved it entirely in the case of curves \cite[Theorem 10.4\,(iv)]{Knebusch1976II}. It turns out that the cokernel~$C_X$ of \eqref{signature4} is a $2$-primary torsion group, and it is natural to focus on estimating the exponent of $C_X$ (see e.g.\,\cite[\S 4]{Parimala1992}). This problem was successively considered by Mah\'e \cite{Mahe1982}, Monnier \cite{Monnier1997,Monnier2000}, Jacobson \cite{jacobsonRealCohomologyPowers2017} and finally Lerbet \cite[Proposition~5.3]{lerbetImageHigherSignature2026}, who proved that $C_X$ is killed by $2^d$. 

We establish an analogous bound for the exponent of the cokernel of higher signature morphisms, conjectured in \cite[Conjecture 6.5]{lerbetImageHigherSignature2026}.

\begin{thm}[Theorem \ref{thexpcoker}]
\label{thexpcokerintro}
Let $X$ be a smooth variety of dimension $d$ over~$\Rb$. Let~$\Lc$ be a line bundle on $X$. For $0\leq c\leq d$ and $0\leq n\leq d$, the cokernel of the higher signature morphism $\sign_{n,X}^c:\Hr^c(X,\Ibf^n(\Lc))\to \Hr^c(X(\Rb),\Zb(\Lc))$ is killed by~$2^{d-n}$.
\end{thm}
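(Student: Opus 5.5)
The plan is to reduce the cokernel bound to a statement about the sheaf morphism $\sign_n:\Ibf^n(\Lc)\to\iota_*\Zb(\Lc)$ and then use the structure of $\Ibf$-cohomology in large degrees. Two known inputs drive everything. First, by Jacobson's theorem, $\sign_n$ induces an isomorphism $\Ibf^m(\Lc)\isoto\iota_*\Zb(\Lc)$ (up to the factor $2^{m-n}$ in the comparison) for $m\geq d+1$, so $\Hr^c(X,\Ibf^m(\Lc))\isoto\Hr^c(X(\Rb),\Zb(\Lc))$ via $\sign^c_{m,X}$. Second, the inclusions $\Ibf^{m}\subset\Ibf^{n}$ are compatible with signatures: on $\Ibf^{m}(\Lc)$ we have $\sign_n=2^{m-n}\sign_m$.

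Taking $m=d+1$ would only give that the cokernel of $\sign^c_{n,X}$ is killed by $2^{d+1-n}$. This is one factor of $2$ too weak. The real content is therefore to gain one power of $2$, by showing that $2^{d-n}\,\Hr^c(X(\Rb),\Zb(\Lc))$ is already hit. I would do this by induction along the filtration, comparing degree $n=d$ with degree $d+1$.

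The key claim is that the cokernel of $\sign^c_{d,X}$ is killed by $1$, i.e.\ $\sign^c_{d,X}$ is surjective. Granting this, for general $n\leq d$ any $\beta\in\Hr^c(X(\Rb),\Zb(\Lc))$ equals $\sign^c_{d,X}(\alpha)$ for some $\alpha$. The image $\alpha'$ of $\alpha$ in $\Hr^c(X,\Ibf^n(\Lc))$ then satisfies $\sign^c_{n,X}(\alpha')=2^{d-n}\beta$, which gives the theorem.

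To prove the key claim, I would use the exact sequence of sheaves $0\to\Ibf^{d+1}(\Lc)\to\Ibf^{d}(\Lc)\to\bar\Ibf^{d}\to0$, where $\bar\Ibf^d\cong\Hsc^d_X(\Zb/2)$ by Milnor's conjecture (Orlov--Vishik--Voevodsky), together with the sequence $0\to\Zb(\Lc)\xrightarrow{2}\Zb(\Lc)\to\Zb/2\to0$ on $X(\Rb)$. Signatures map the first sequence to the second: $\sign_d$ on $\Ibf^{d+1}$ is $2\sign_{d+1}$, which is an isomorphism onto $2\,\iota_*\Zb(\Lc)$. On the quotient, the induced map $\Hsc^d_X(\Zb/2)\to\iota_*\Zb/2$ is evaluation at real points, via the identification of $\Hsc^j$ with $\iota_*\Zb/2$ for $j\geq d+1$ due to Colliot-Thélène and Parimala. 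The map $\iota_*\Zb(\Lc)\to\iota_*\Zb/2$ is surjective on sheaves, so the five lemma reduces surjectivity on $\Hr^c$ to a diagram chase. One needs $\Hr^c(X,\Hsc^d_X(\Zb/2))\to\Hr^c(X(\Rb),\Zb/2)$ to be surjective, and one needs to control the connecting maps, where the Bockstein on the real side must be matched.

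The main obstacle is the surjectivity of $\Hr^c(X,\Hsc^d_X(\Zb/2))\to\Hr^c(X(\Rb),\Zb/2)$. This is a higher-degree analogue of Theorem \ref{thKvH}. I would try to deduce it from the Bloch--Ogus/Leray spectral sequence, which identifies $\Hr^c(X,\Hsc^d_X)$ with a piece of $\Hr^{c+d}_G(X(\Cb),\Zb/2)$. I would then use van Hamel's theorem that $\Hr^{c+d}_G(X(\Cb),\Zb/2)\to\Hr^{c}(X(\Rb),\Zb/2)$ (the degree-$d$ component of restriction) is surjective, i.e.\ the $c$-dimensional generalization of Theorem \ref{thKvH} obtained via equivariant Poincaré duality and the Borel--Haefliger cycle class. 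The delicate step is lifting through the coniveau filtration: one must show that the relevant classes come from $\Hr^c(X,\Hsc^d)$ rather than from lower-coniveau pieces. Failing that, I would argue directly with Gersten (Rost–Schmid) complexes, representing classes by cycles of codimension $c$ supported over real points, using the real cycle class map of Hornbostel--Wendt--Xie--Zibrowius and Jacobson's comparison.
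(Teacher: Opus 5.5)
Your reductions are sound and follow the paper's structure. The inclusion $\Ibf^d(\Lc)\subseteq\Ibf^n(\Lc)$ multiplies signatures by $2^{d-n}$, so it suffices to show that $\sign^c_{d,X}$ is surjective. The ladder relating $0\to\Ibf^{d+1}(\Lc)\to\Ibf^d(\Lc)\to\Hsc^d_X(\Zb/2)\to0$ to $0\to\iota_*\Zb(\Lc)\xrightarrow{2}\iota_*\Zb(\Lc)\to\iota_*\Zb/2\to0$, with $\sign_{d+1}$ an isomorphism, then reduces this by the four lemma to the surjectivity of $\Hr^c(X,\Hsc^d_X(\Zb/2))\to\Hr^c(X(\Rb),\Zb/2)$. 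The compatibility of the connecting maps with the Bockstein is automatic from naturality, so nothing needs to be matched.

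But that last surjectivity is the whole content of the theorem, and your proposal leaves it open. The Bloch--Ogus route you sketch does not work as stated. $\Hr^c(X,\Hsc^d_X(\Zb/2))$ is the term $E_2^{c,d}$, not a piece of $\Hr^{c+d}_G(X(\Cb),\Zb/2)$. It receives differentials from subquotients of $E_2^{c-r,d+r-1}=\Hr^{c-r}(X(\Rb),\Zb/2)$ (since $\Hsc^q_X(\Zb/2)=\iota_*\Zb/2$ for $q>d$), and it emits differentials towards $\Hr^{c+r}(X,\Hsc^{d-r+1}_X(\Zb/2))$. Knowing that $\Hr^{c+d}_G(X(\Cb),\Zb/2)\to\Hr^c(X(\Rb),\Zb/2)$ is onto gives no control over these differentials. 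The statement was previously known only for $c\in\{d-1,d\}$, and for $c=d-2$ under the extra hypothesis $\Hr^{2d-1}(X(\Cb),\Zb/2)=0$ \cite{colliot-theleneZerocyclesCohomologyReal1996}, so it is not a routine spectral sequence consequence. Mere surjectivity of the sheaf map $\oev^d_{\nr}:\Hsc^d_X(\Zb/2)\to\iota_*\Zb/2$ (Theorem~\ref{thKvH} applied to small open sets) would not suffice either for $c\geq 1$: the obstruction lives in $\Hr^{c+1}$ of its kernel.

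The missing idea is that the Krasnov--van Hamel surjectivity comes with a section that is natural in the open set. For $U\subseteq X$ open and $V$ a connected component of $U(\Rb)$, the equivariant fundamental class $\overline{\sigma}_V\in\Hr^d_G(U(\Cb),\Zb/2)$ evaluates to the characteristic function of $V$ \cite[Remark~1.6\,(ii)]{benoistIntegralHodgeConjecture2020}. These classes are compatible with restriction to smaller open sets. They therefore form a morphism of presheaves from $\iota_*\Zb/2$ to $U\mapsto\Hr^d_G(U(\Cb),\Zb/2)$, whose sheafification is a section of $\oev^d_{\nr}$.

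A split surjection of sheaves stays split surjective on every $\Hr^c$, which closes your four-lemma argument. More directly, since $\sign_{d+1}$ is an isomorphism, the square formed by $\Ibf^d(\Lc)\to\Hsc^d_X(\Zb/2)$, $\sign_d$, $\oev^d_{\nr}$ and reduction mod $2$ is cartesian. The section therefore pulls back to a section of $\sign_d:\Ibf^d(\Lc)\to\iota_*\Zb(\Lc)$ itself, so $\sign^c_{d,X}$ is split surjective for every $c$. This is exactly the paper's argument.
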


Our proof of Theorem \ref{thexpcokerintro} exploits the canonical section of $\oev^d_X$ constructed in \cite[Remark~1.6\,(ii)]{benoistIntegralHodgeConjecture2020} (which strengthens Krasnov and van Hamel's Theorem~\ref{thKvH}).

\subsubsection{Signature morphisms with small image}

We also investigate the optimality of the exponent bounds in Theorem \ref{thexpcokerintro}. 

\begin{thm}[Theorem \ref{thexpcokeroptimal}]
\label{thexpcokeroptimalintro}
Fix $0\leq c\leq d$ and $0\leq n< d$. There exists a smooth variety $X$ of dimension~$d$ over~$\Rb$ such that the cokernel of 
$$\sign_{n,X}^c:\Hr^c(X,\Ibf^n)\to \Hr^c(X(\Rb),\Zb)$$ 
is not killed by~$2^{d-n-1}$. If $d-c\leq 4$, one can moreover choose $X$ to be projective.
 \end{thm}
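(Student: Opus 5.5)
Here is the plan. I first reduce to a statement about the degree-$0$ evaluation on a lower-dimensional variety. Then I build $X$ by a product construction, in the spirit of Theorem \ref{th1}\,\ref{cohoiii} and Theorem \ref{th4}.

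\emph{Step 1: the case $c=0$.} Take $X$ of dimension $e:=d-c$ whose signature image in $\Hr^0(X(\Rb),\Zb)$ is small. We need the image of $\Hr^0(X,\Ibf^n)$ to miss $2^{e-n-1}\cdot\mathbf{1}_{C}$ for some connected component $C$. Two tools are available:
\begin{itemize}
\item the comparison of $\sign_n$ modulo $2$ with $\oev^n_{\nr,X}$, via the Milnor conjecture $\Ibf^n/\Ibf^{n+1}\cong\Hsc^n(\Zb/2)$ and the compatibility with the Jacobson/Arason description $\Ibf^n\to\iota_*\Zb$;
\item its iterates, which identify the $2$-adic filtration of the signature image with the images of $\oev^{m}_{\nr,X}$ for $n\le m\le e-1$.
\end{itemize}
Concretely, if $2^{k}\mathbf{1}_C$ lies in the image of $\sign_{n,X}$, then $\mathbf{1}_C$ lies in the image of $\oev^{n+k}_{\nr,X}$. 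So it suffices to find $X$ of dimension $e$ with $\oev^{e-1}_{\nr,X}$ not hitting some component indicator $\mathbf{1}_C$. This is exactly the situation of Theorem \ref{th4}\,\ref{exaii}--\ref{exaiv} for $e\le 4$: cubic hypersurfaces, intersections of two quadrics, and abelian varieties with $2^e$ components.

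For general $e$ without projectivity, I would take a non-projective example: $X=\mathbb{G}_m^e$ or an open subset of $\Abb^e$. For $\mathbb{G}_m^e$, with real locus $(\Rb^\times)^e$, the classes in $\Hr^{m}_{\nr}$ are polynomials in the $(x_i)$ of degree $\le m$. The indicator of the single orthant $\{x_i>0\ \forall i\}$ has degree exactly $e$ as a Boolean function, so it is not in the image of $\oev^{e-1}$. The same count shows that $2^{e-n-1}$ does not kill the cokernel, because the Pfister forms $\langle\langle x_{i_1},\dots,x_{i_m}\rangle\rangle$ generate and give the known image.

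\emph{Step 2: from $c=0$ to general $c$.} Take $X=Y\times Z$, with $Y$ as in Step 1 of dimension $d-c$, and $Z$ of dimension $c$ with a component on which the top class $\Hr^c(Z(\Rb),\Zb)$ is realized by $\Hr^c(Z,\Ibf^{0})$ with odd multiplicity. For example, take $Z$ a product of $c$ anisotropic conics, or $(\Pb^1)^c$; here the fundamental class of a real point is the pushforward of $1$, which is a degree-$c$ class in $\Ibf^0$-cohomology with the twist trivial. I then use a pushforward/slant argument: restriction to $Y\times\{z\}$ composed with a suitable pairing against the point class should reduce $\sign^c_{n,X}$ to $\sign^0_{n,Y}$. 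Equivalently, pull back along $Y\to X$ and use a projection formula $\Hr^c(X,\Ibf^n)\to\Hr^0(Y,\Ibf^n)$ given by pushforward along $X\to Y$ (proper since $Z$ is projective). This pushforward is compatible with signatures, because the real cycle class commutes with proper pushforward by Hornbostel--Wendt--Xie--Zibrowius. Then the class $\mathbf{1}_C\times[\mathrm{pt}]$, multiplied by $2^{d-n-1}$, cannot be in the image, since its pushforward $2^{d-c-n-1}\mathbf{1}_C$ is not in the image for $Y$. Projectivity of $X$ then follows from that of $Y$, whence the condition $d-c\le4$.

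\emph{Main obstacle.} I expect the hard part to be the $2$-adic filtration argument in Step 1. One must make sure that divisibility of the signature by $2^k$ really forces a class in $\Ibf^{n+k}$ at the unramified level; the Jacobson-type isomorphism $\Ibf^{j}\cong 2^{j}\iota_*\Zb$ holds only for $j>e$, and the intermediate steps need the long exact sequences for $\Ibf^{m+1}\to\Ibf^m\to\Hsc^m$. It would be simpler to relate the failure directly to non-surjectivity of $\oev^{e-1}_{\nr}$ only in the case $n=e-1$, which is the key case. For general $n<e$, I would then multiply by an auxiliary curve factor to shift $n$.
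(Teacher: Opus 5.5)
Your overall architecture (a product with $(\Pb^1_\Rb)^c$ to reduce to $c=0$, then $\Gb_{m,\Rb}^{e}$ in general and cubic hypersurfaces when $e\le 4$) is close to the paper's, but two steps fail as written.

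\textbf{Step 2 has the wrong degree shift.} Pushforward along $p\colon X=Y\times(\Pb^1_\Rb)^c\to Y$, which has relative dimension $c$, lowers the $\Ibf$-degree by $c$. It maps $\Hr^c(X,\Ibf^n)$ to $\Hr^0(Y,\Ibf^{n-c})$ (with $\Ibf^m=\Wbf$ for $m\le 0$), not to $\Hr^0(Y,\Ibf^n)$. Your bookkeeping is also inconsistent: $p(\Rb)_*$ sends $2^{d-n-1}(\mathbf{1}_C\times[\mathrm{pt}])$ to $2^{d-n-1}\mathbf{1}_C$, not to $2^{d-c-n-1}\mathbf{1}_C$. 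Moreover, for $c\ge 1$ and $n\le d-c$, Theorem \ref{thexpcoker} shows that the cokernel of $\sign^0_{n,Y}$ is killed by $2^{d-c-n}$, which divides $2^{d-n-1}$. So no obstruction can come from $\sign^0_{n,Y}$.

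With the correct shift, you need Step 1 for the index $n-c$ when $n\ge c$. This is what Lemma \ref{lem:P1_suspension_image_quadratic_real_cycle_class_map} and Proposition \ref{propdeYaX} achieve, via a retract rather than a pushforward. When $n<c$ (which includes the case $c=d$), you need a separate argument. The paper uses the surjection $\Hr^c(X,\Ibf^c)\to\Hr^c(X,\Ibf^n)$, which gives $\Im(\sign^c_{n,X})=2^{c-n}\Im(\sign^c_{c,X})$, and then reduces to $\Wr_{\nr}(Y)$. Also, products of anisotropic conics are useless here, since they make $X(\Rb)$ empty.

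\textbf{Step 1 (projective case) rests on an unproved implication.} The claim ``$2^k\mathbf{1}_C\in\Im(\sign^0_{n,Y})\Rightarrow\mathbf{1}_C\in\Im(\oev^{n+k}_{\nr,Y})$'' does not follow from the ladder \eqref{signsignev}. If $\sign_n(q)$ is even, you only get $\oev^n_{\nr,Y}(\pi_n(q))=0$, not $\pi_n(q)=0$. So $q$ need not come from $\Hr^0(Y,\Ibf^{n+1})$, and repairing this requires control of $\ker(\oev^n_{\nr,Y})$, which you have not analyzed (it is nonzero, for instance, for the abelian varieties you list). Non-surjectivity of $\oev^{e-1}_{\nr,Y}$ alone only settles $n=e-1$.

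Your fallback does not close the gap. Taking a product with a curve raises the dimension rather than shifting $n$. $\Pb^1$-suspension preserves both $d-n$ and $d-c$. Hence the cases $n<e-1$ must be handled on $Y$ itself.

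The missing input, supplied in Proposition \ref{prop:cokernel_cubic}, is knowledge of the \emph{whole} group $\Hr^m_{\nr}(Y,\Zb/2)$ for $m\le e-2$. For a smooth cubic $Y$ of dimension $e\le 4$ and $m\le 2$, this group is generated by the constant class. This follows from Proposition \ref{prop:weak_lefschetz}, Lemma \ref{lemmepetitdegre}, and the fact that $\Hr^m_{\nr}(\Pb^{e+1}_\Rb,\Zb/2)$ comes from the point. For $m=3$ and $e=4$, one only needs that $\Im(\oev^3_{\nr,Y})$ is generated by $1$, which holds for very general $Y$. A decreasing induction then shows that $\Hr^0(Y,\Ibf^m)$ is generated by $\Hr^0(Y,\Ibf^{m+1})$ and $\langle 1,1\rangle^{\otimes m}$ for $m\le 2$. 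Hence $\Im(\sign^0_{m,Y})$ is generated by $1$ and $2^{e-m}\Hr^0(Y(\Rb),\Zb)$.

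Your treatment of $\Gb_{m,\Rb}^{e}$ is right in outline. However, the claim that the Pfister forms generate $\Hr^0(\Gb_{m,\Rb}^{e},\Ibf^n)$ needs the homotopy invariance and purity computation carried out in Proposition \ref{propmultiplicative}.
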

 
 The question of the optimality of such exponent bounds was first raised by Lerbet in \cite[Conjecture 6.5]{lerbetImageHigherSignature2026}. The precise optimality question asked in \emph{loc.\,cit.} was solved by Hennig in \cite[Proposition~5.1]{hennigRealCycleClass2025}. The first assertion of Theorem \ref{thexpcokeroptimalintro} is a variant of Hennig's result (which treats $n=c$ and allows the signature morphism to be twisted by a line bundle, as in \cite{lerbetImageHigherSignature2026}) and its proof is inspired by his arguments. 
 
  In the spirit of the results presented in \S\ref{parcoho} and \S\ref{parunr}, it is desirable to have projective examples. The second assertion of Theorem \ref{thexpcokeroptimalintro} is what we could prove in this direction. In the crucial case when $n=c=0$, this gives examples of smooth projective varieties~$X$ of dimension $1\leq d\leq 4$ over $\Rb$ such that the cokernel of the signature morphism~${\Wr_{\nr}(X)\to \Hr^0(X(\Rb),\Zb)}$ is not killed by $2^{d-1}$. This result is due to Knebusch \cite[Theorem 10.4\,(iv)]{Knebusch1976II} for $d=1$ and to Monnier \cite[Theorem~5.2]{Monnier2000} for~$d=2$ (note that $\Wr(X)=\Wr_{\nr}(X)$ in these dimensions, see e.g.\,\cite[p.\,128]{balmerWalterGerstenWitt2002}). It is new for~$d\in\{3,4\}$. Our examples are smooth cubic hypersurfaces, and our proof uses Theorem~\ref{th4}\,\ref{exaiii} as a key input. Surprisingly, smooth cubic hypersurfaces cannot yield examples of this kind in higher dimensions.

\begin{thm}[Propositions \ref{prop:cokernel_cubic} and \ref{propPfistercubic}]
Let $X$ be a smooth cubic hypersurface of dimension $d$ over $\Rb$ with $X(\Rb)$ disconnected. 
If $d\in\{1,2,3\}$, or if~$d=4$ and~$X$ is very general, then the cokernel of the signature morphism $\Wr_{\nr}(X)\to \Hr^0(X(\Rb),\Zb)$ is not killed by $2^{d-1}$. If~$d\geq 5$, this cokernel is killed by~$2^{d-1}$.
 \end{thm}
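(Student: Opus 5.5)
The statement has two halves, and I would treat them separately.

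\textbf{Non-divisibility for $d\le 3$, or $d=4$ and $X$ very general.} I would argue by contradiction, comparing with unramified cohomology via the Arason--Pfister/Milnor conjecture. Suppose the cokernel $C_X$ of $\Wr_{\nr}(X)\to \Hr^0(X(\Rb),\Zb)$ is killed by $2^{d-1}$. Choose a function $\delta\in\Hr^0(X(\Rb),\Zb)$ equal to $1$ on one connected component and $0$ on the others (possible since $X(\Rb)$ is disconnected). Then $2^{d-1}\delta$ is the signature of some $\alpha\in\Wr_{\nr}(X)$. The next step is to pass to the filtration by $\Ibf^n$, using the sheaf-level fact that an element of $\Wbf$ whose signatures are divisible by $2^{n}$ lies in $\Ibf^n$ up to torsion. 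On $\Rb$-varieties the torsion in $\Wbf$ is $2$-primary with bounded exponent, so I would adjust by modifying $\alpha$ (multiplying by a suitable power of $2$, or adding a constant form) so as to get a section of $\Ibf^{d-1}$ with $\sign_{d-1}$ equal to $\delta$ plus a constant. Applying the residue map $\Ibf^{d-1}\to\Ibf^{d-1}/\Ibf^{d}\simeq\Hsc^{d-1}(\Zb/2)$ then produces a class in $\Hr^{d-1}_{\nr}(X,\Zb/2)$. Its value at a real point $x$ is the parity of $\sign_{d-1}(\alpha)(x)$. This makes $\oev^{d-1}_{\nr,X}$ hit $\delta$. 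Since $\Hr^0(X(\Rb),\Zb/2)$ is generated by such $\delta$ and the constants, $\oev^{d-1}_{\nr,X}$ is surjective, contradicting Theorem~\ref{th4}\,\ref{exaiii}. The compatibility between $\sign_{d-1}$ modulo $2$ and evaluation of the residue class (a Jacobson-type statement) is the standard input I would cite.

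\textbf{Divisibility for $d\ge5$.} Theorem~\ref{th4}\,\ref{exaiii} gives that $\ev^{d-1}_{\nr,X}$ is surjective. I would then lift classes in $\Hr^{d-1}_{\nr}(X,\Zb(d-1))$ to sections of $\Ibf^{d-1}$, in the spirit of the section used for Theorem~\ref{thexpcokerintro}, and control the signature exactly rather than only modulo $2$. The cleanest route I see is explicit: for a cubic with disconnected real locus, write $X(\Rb)$ as two components separated by the sign of a rational function. One natural candidate is a linear form restricted to a component cut out by a line-projection structure; another is the conic bundle structure obtained by projecting from a real line $\ell\subset X$, with discriminant $\Delta$. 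From this data I would build a Pfister form $\langle\langle f_1,\dots,f_{d-1}\rangle\rangle$ that is unramified on $X$. Its signature is $2^{d-1}$ on one component and $0$ on the other. Together with the constant forms, this shows $2^{d-1}\Hr^0(X(\Rb),\Zb)\subset\im$, as the reference to ``Pfister'' in the cited proposition suggests.

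\textbf{Main obstacle.} The hard step is verifying that this Pfister form is unramified. I would check its residues along the components of the divisors of the $f_i$. The plan is to choose the $f_i$ as quadratic forms in the coordinates, of the kind for which the corresponding unramified class was produced in proving Theorem~\ref{th4}\,\ref{exaiii}. The dimension hypothesis $d\ge5$ should then give enough room, via a Springer-type argument, for the residues to vanish.
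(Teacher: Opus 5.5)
\textbf{The first half has a genuine gap.} It occurs where you turn $\alpha\in\Wr_{\nr}(X)$ with $\sign(\alpha)=2^{d-1}\delta$ into a section of $\Ibf^{d-1}$ with $\sign_{d-1}$ equal to $\delta$ plus a constant. The fact that forms with signatures divisible by $2^n$ lie in $\Ir^n$ up to torsion only gives $\alpha=\beta+\tau$ in $\Wr(\Rb(X))$. Neither $\beta$ nor $\tau$ need be unramified, and even stalkwise decompositions need not glue.

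Multiplying by $2^m$ to kill $\tau$ does not help. It yields $2^m\alpha\in\Hr^0(X,\Ibf^{d-1+m})$ with $\sign_{d-1+m}(2^m\alpha)=\delta$. This only says that $\delta$ is in the image of $\oev^{d-1+m}_{\nr,X}$, which is automatic for $m\geq 1$ by Theorem~\ref{thKvH}. Adding constant forms cannot correct a non-constant discrepancy.

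What you need is to push $\alpha$ itself down the filtration. For each $n<d-1$, the image of $\alpha$ in $\Hr^n_{\nr}(X,\Zb/2)$ has zero evaluation at all real points, and you must know that it vanishes, i.e.\ that $\oev^n_{\nr,X}$ is injective there. The non-surjectivity of $\oev^{d-1}_{\nr,X}$ (Theorem~\ref{th4}\,\ref{exaiii}) gives no information about this.

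The paper gets this from the geometry of cubics. For $n\leq 2$ and $n<d$, the group $\Hr^n_{\nr}(X,\Zb/2)$ is generated by the image of $\Hr^n_{\nr}(\Spec(\Rb),\Zb/2)$. This follows from Proposition~\ref{prop:weak_lefschetz}, the surjectivity in Lemma~\ref{lemmepetitdegre}, and Colliot-Th\'el\`ene--Ojanguren for $\Pb^{d+1}_{\Rb}$. Hence $\Hr^0(X,\Ibf^n)$ is generated by $\Hr^0(X,\Ibf^{n+1})$ and $\langle1,1\rangle^{\otimes n}$. A decreasing induction on $n$ then shows that $\Im(\sign^0_{n,X})$ is generated by $1$ and $2^{d-n}\Hr^0(X(\Rb),\Zb)$. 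It starts from the surjectivity of $\sign^0_{d,X}$, and for $d=4$, $n=3$ it uses only that $\Im(\oev^{3}_{\nr,X})$ is generated by $1$. The case $n=0$ is the claim. With this extra input, your contradiction argument would also go through.

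\textbf{The second half is not yet a proof, and the routes you suggest do not lead to one.} Nothing lets you lift classes of $\Hr^{d-1}_{\nr}(X,\Zb/2)$ or $\Hr^{d-1}_{\nr}(X,\Zb(d-1))$ to $\Hr^0(X,\Ibf^{d-1})$. The section used for Theorem~\ref{thexpcoker} exists only for $n=d$, via the fiber product square~\eqref{eq:milnor_conjecture2}. For $n=d-1$, the cokernel of $\Hr^0(X,\Ibf^{d-1})\to\Hr^{d-1}_{\nr}(X,\Zb/2)$ injects into $\Hr^1(X,\Ibf^d)$, which has no reason to vanish. Controlling signatures modulo $2$ would in fact suffice, since $\sign^0_{d,X}$ is surjective. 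Moreover, the surjectivity of $\ev^{d-1}_{\nr,X}$ for $d\geq5$ comes from the non-constructive coniveau criterion (Corollary~\ref{cortriangle2}\,\ref{coroiii} and Lemma~\ref{lemconiveaucubic}). So there is no explicit class whose functions $f_i$ you could reuse, and the paper's proof does not use it at all.

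An explicit Pfister form is the right idea, but the key geometric input is missing: a $2$-plane $\Pi\subset X$, which exists for $d\geq 5$ (Debarre--Manivel). The construction goes as follows.
\begin{enumerate}[label=(\alph*)]
\item Blowing up $\Pi$ and projecting from it gives $\pi:\widetilde{X}\to\Pb^{d-2}_{\Rb}$, whose fibers are $\Pb^3$'s or quadric surfaces. Hence real fibers have empty or connected real loci.
\item Stone--Weierstrass yields $f$ on $\Pb^{d-2}_{\Rb}$ that is negative on $\pi(K)$ and positive on the image of the other components.
\item One takes $q_K=\langle1,-f\rangle\otimes\langle1,1\rangle^{\otimes d-2}$ on $\widetilde{X}$, using birational invariance of $\Hr^0(-,\Ibf^{d-1})$. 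Only one nonconstant slot is needed.
\item If $\ord_D(f)$ is odd, then $D(\Rb)=\varnothing$ and $D':=\pi(D)$ has dimension $\leq d-3$, so $\Ir^{d-2}(\Rb(D'))$ is torsion free.
\item When $\Rb(D')$ is formally real, the generic fiber of $D\to D'$ is a smooth quadric surface $\langle1,a,b,c\rangle$ with $a,b,c$ totally positive. Hence $\langle1,1\rangle^{\otimes d-2}=\langle1,a\rangle\otimes\langle1,b\rangle\otimes\langle1,c\rangle\otimes\langle1,1\rangle^{\otimes d-5}$ in $\Wr(\Rb(D'))$, and this vanishes in $\Wr(\Rb(D))$.
\end{enumerate}
With your conic bundle from a line, $\pi(D)$ may have dimension $d-2$. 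There $\Ir^{d-2}$ can have torsion, and the conic only kills a $2$-fold Pfister form. The residue argument then works only with $\langle1,1\rangle^{\otimes d-1}$, recovering just the general bound $2^d$.
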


This leaves open the following question.

\begin{quest}
\label{qsignnotsurj}
Fix $d\geq 5$. Does there exist a smooth projective variety $X$ of dimension $d$ over~$\Rb$ with the property that the cokernel of the signature morphism ${\Wr_{\nr}(X)\to \Hr^0(X(\Rb),\Zb)}$ is not killed by $2^{d-1}$?
\end{quest}

\section{Cohomology}

In this section, we study evaluation maps for $G$-equivariant cohomology.

\subsection{Recollections on equivariant cohomology}
\label{parrecollections}

We gather here a few known facts on $G$-equivariant cohomology. Let $X$ be an algebraic variety over $\Rb$.

We first give a precise definition of the evaluation maps under investigation. We work with complexes of $G$-modules, as this generality will be useful in \S\ref{parsurjectiveevnr}. Let~$M$ be a bounded complex of $G$-modules and fix $j\in\Zb$. Recall that $\Hr^j_G(\pt,M)=\Hr^j(G,M)$ for~$j\in\Zb$. There exists a (necessarily unique) evaluation morphism
$$\ev^j_{X,M}:\Hr^j_G(X(\Cb),M)\to \Hr^0(X(\Rb), \Hr^j(G,M))$$
such that $\ev^j_{X,M}(\alpha)$ is given by $x\mapsto\alpha|_x$ for all $\alpha\in\Hr^j_G(X(\Cb),M)$ and $x\in X(\Rb)$. One can construct $\ev^j_{X,M}$ as the composition 
$$\Hr^j_G(X(\Cb),M)\to\Hr^j_G(X(\Rb),M)\to\Hr^0(X(\Rb), \Hr^j(G,M))$$
of the restriction map and of an edge map in the first spectral sequence of equivariant cohomology \cite[Th\'eor\`eme 5.2.1]{tohoku}. We are mostly interested in $\oev^j_X:=\ev^j_{X,\Zb/2}$ (for~$j\geq 0$) and $\ev_X^j:=\ev^j_{X,\Zb(j)}$ (for $j\geq 1$), which were introduced in \eqref{evalcoho}.

For any $G$-equivariant sheaf $\Fc$ on $X(\Cb)$, we consider the $G$-equivariant sheaves $\Fc[G]:=\Fc\otimes_\Zb\Zb[G]$ and $\Fc(1):=\Fc\otimes_\Zb\Zb(1)$, both endowed with the diagonal $G$-action. It follows from \cite[(1.5)]{benoistIntegralHodgeConjecture2020} that
\begin{equation}
\label{Shapiro}
\Hr^j_G(X(\Cb),\Fc[G])=\Hr^j(X(\Cb),\Fc).
\end{equation}
As explained in \cite[(1.6) and \S 1.1.2]{benoistIntegralHodgeConjecture2020}, the short exact sequence \begin{equation}
\label{rcses}
0\to\Fc\to\Fc[G]\to\Fc(1)\to 0
\end{equation}
 then gives rise to a \textit{real-complex long exact sequence}
\begin{equation}
\label{rcles}
 \Hr^j_G(X(\Cb),\Fc)\to \Hr^j(X(\Cb),\Fc)\to \Hr^j_G(X(\Cb),\Fc(1))\to\Hr^{j+1}_G(X(\Cb),\Fc)\to \dots
\end{equation}
with boundary maps given by cup product with the generator of $\Hr^1(G,\Zb(1))\simeq\Zb/2$.

\subsection{Varieties with no \texorpdfstring{$2$}{2}-torsion in cohomological degree \texorpdfstring{$d+1$}{d+1}}

We start with a basic proposition.

\begin{prop}
\label{propno2torsion}
Let $X$ be a smooth variety of dimension $d\geq 1$ over $\Rb$. If the $2$-torsion subgroup~$\Hr^{d+1}(X(\Cb),\Zb)[2]$ of $\Hr^{d+1}(X(\Cb),\Zb)$ vanishes, then $\ev^d_X$ is~surjective.
\end{prop}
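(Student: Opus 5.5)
The plan is to lift classes from $\Zb/2$-coefficients to $\Zb(d)$-coefficients using Theorem~\ref{thKvH} together with a Bockstein argument. Consider the short exact sequence $0\to\Zb(d)\xrightarrow{2}\Zb(d)\to\Zb/2\to0$ of $G$-modules. It yields the exact sequence
$$\Hr^d_G(X(\Cb),\Zb(d))\to\Hr^d_G(X(\Cb),\Zb/2)\xrightarrow{\beta}\Hr^{d+1}_G(X(\Cb),\Zb(d)).$$
Reduction mod $2$ is compatible with evaluation, and at a real point the reduction $\Hr^d(G,\Zb(d))\to\Hr^d(G,\Zb/2)$ is an isomorphism $\Zb/2\isoto\Zb/2$. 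Given $s\in\Hr^0(X(\Rb),\Zb/2)$, Theorem~\ref{thKvH} provides $\alpha\in\Hr^d_G(X(\Cb),\Zb/2)$ with $\oev^d_X(\alpha)=s$. It therefore suffices to modify $\alpha$, without changing its evaluation, so that $\beta(\alpha)=0$.

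The key step is to show that $\Hr^{d+1}_G(X(\Cb),\Zb(d))$ is controlled by its image in $\Hr^{d+1}(X(\Cb),\Zb)$. Indeed, $\beta(\alpha)$ is $2$-torsion, so its image in $\Hr^{d+1}(X(\Cb),\Zb)$ lies in $\Hr^{d+1}(X(\Cb),\Zb)[2]=0$. By the real-complex long exact sequence~\eqref{rcles} with $\Fc=\Zb(d+1)$, whose $(1)$-twist is $\Zb(d)$, the kernel of $\Hr^{d+1}_G(X(\Cb),\Zb(d))\to\Hr^{d+1}(X(\Cb),\Zb)$ is the image of cup product with the generator $\omega$ of $\Hr^1(G,\Zb(1))$ from $\Hr^{d}_G(X(\Cb),\Zb(d+1))$. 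Hence $\beta(\alpha)=\omega\cdot\gamma$ for some $\gamma\in\Hr^d_G(X(\Cb),\Zb(d+1))$.

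Next I would exploit the fact that $X(\Cb)$ has real dimension $2d$ and $X(\Rb)$ has dimension $d$. In the range above $d$ this makes the equivariant cohomology "purely real": for $j>2d$, $\Hr^j_G(X(\Cb),M)\cong\Hr^j_G(X(\Rb),M)$, and cupping with $\omega$ becomes an isomorphism. I would rather aim for the following concrete route. Write $\omega\gamma=\beta(\bar\omega\bar\gamma')$, where $\bar\gamma'$ is a suitable mod $2$ class of degree $d-1$. This uses $\beta(\bar\omega\,x)=\omega\cdot\tilde x$ when $x$ is the reduction of an integral class $\tilde x$. Indeed, $\gamma\in\Hr^d_G(X(\Cb),\Zb(d+1))$ and $\Zb(d+1)=\Zb(d-1)$ as $G$-modules, so $\omega\gamma$ lies in the image of $\omega\cdot:\Hr^d_G(\Zb(d-1))\to\Hr^{d+1}_G(\Zb(d))$. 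If $\gamma$ is divisible appropriately, we get $\omega\gamma=\beta(\bar\omega\cdot x)$ with $x\in\Hr^{d-1}_G(X(\Cb),\Zb/2)$. Then $\alpha':=\alpha+\bar\omega x$ satisfies $\beta(\alpha')=0$ (up to the factor-of-two check), and $\alpha'$ lifts to $\Zb(d)$. One must also check that $\oev^d_X(\bar\omega x)$ is harmless, or arrange the correction to vanish at real points.

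The main obstacle is exactly this last correction step: controlling the kernel term $\omega\gamma$, and making sure the modification does not alter the evaluation. My first attempt would be to replace the arbitrary $\alpha$ by the canonical section of $\oev^d_X$ from \cite[Remark~1.6\,(ii)]{benoistIntegralHodgeConjecture2020}. That section is constructed from classes supported near $X(\Rb)$, essentially Thom or cycle classes of real points, and I expect it to come from a class whose Bockstein is visibly in the image of $\omega$ from a group that vanishes or maps to zero. Failing that, I would use the Borel–Haefliger or Krasnov description in degrees $>d$ relative to $X(\Rb)$ to show that $\omega$-multiples in degree $d+1$ with $\Zb(d)$-coefficients restrict to zero at real points in degree $0$.
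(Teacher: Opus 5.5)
Your reduction is correct up to the point where you obtain $\beta(\alpha)=\omega\gamma$ for some $\gamma\in\Hr^d_G(X(\Cb),\Zb(d+1))$. (The relevant piece of \eqref{rcles} is the one with $\Fc=\Zb(d)$ and $\Fc(1)=\Zb(d+1)$, not $\Fc=\Zb(d+1)$, but the conclusion is right.)

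The argument stops there, and the correction you propose does not work. Writing $\omega\gamma$ as the Bockstein of $\overline{\omega}x$ with $x$ of degree $d-1$ relies on an unjustified divisibility of $\gamma$. Even if such an $x$ existed, adding $\overline{\omega}x$ would shift the evaluation by $\oev^{d-1}_X(x)$, because cup product with $\overline{\omega}$ is an isomorphism $\Hr^{d-1}(G,\Zb/2)\to\Hr^d(G,\Zb/2)$, and nothing in your argument controls this term. Your fallback ideas (degrees above $2d$, the canonical section, Borel--Haefliger classes) are directions, not arguments. As written, the proof has a genuine gap at exactly the step you identified as the main obstacle.

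The missing observation is that $\gamma$ itself supplies the correction. Let $g\in G$ denote complex conjugation. There is a morphism of short exact sequences from $0\to\Zb(d)\to\Zb[G]\to\Zb(d+1)\to0$ (the sequence \eqref{rcses} for $\Fc=\Zb(d)$, with first map $1\mapsto 1+(-1)^dg$) to $0\to\Zb(d)\xrightarrow{2}\Zb(d)\to\Zb/2\to0$. It is given by the identity on $\Zb(d)$, by $a+bg\mapsto a+(-1)^db$ on $\Zb[G]$, and by reduction mod $2$ on $\Zb(d+1)$.

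\begin{itemize}
\item By naturality of connecting maps, the Bockstein of the reduction $\overline{\gamma}\in\Hr^d_G(X(\Cb),\Zb/2)$ equals $\omega\gamma$.
\item Moreover $\oev^d_X(\overline{\gamma})=0$, because $\Hr^d(G,\Zb(d+1))=0$.
\item Hence $\alpha-\overline{\gamma}$ has zero Bockstein, lifts to $\Hr^d_G(X(\Cb),\Zb(d))$, and still evaluates to $s$.
\end{itemize}

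The paper does exactly this in a single step. It uses the exact sequence $0\to\Zb[G]\to\Zb(d)\oplus\Zb(d+1)\to\Zb/2\to0$, whose connecting map lands in $\Hr^{d+1}(X(\Cb),\Zb)$ with $2$-torsion image, hence vanishes. So $\alpha$ lifts to a pair of classes with coefficients $\Zb(d)$ and $\Zb(d+1)$. Only the $\Zb(d)$-component contributes to the evaluation, since $\Hr^d(G,\Zb(d+1))=0$ and $\Hr^d(G,\Zb(d))\to\Hr^d(G,\Zb/2)$ is an isomorphism.
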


\begin{proof}
Consider the short exact sequence of $G$-modules 
\begin{equation}
\label{sesGdd+1}
0\to \Zb[G]\to\Zb(d)\oplus\Zb(d+1)\to\Zb/2\to0.
\end{equation}
Taking $G$-equivariant cohomology in \eqref{sesGdd+1} and using \eqref{Shapiro} yields an exact sequence
\begin{equation}
\label{lesGdd+1}
\Hr^{d}_G(X(\Cb),\Zb(d))\oplus\Hr^{d}_G(X(\Cb),\Zb(d+1))\to\Hr^{d}_G(X(\Cb),\Zb/2)\to \Hr^{d+1}(X(\Cb),\Zb).
\end{equation}
As $\Hr^{d+1}(X(\Cb),\Zb)[2]=0$, the right morphism of \eqref{lesGdd+1} vanishes, and hence the left morphism of \eqref{lesGdd+1} is surjective. Now choose an element $\sigma\in \Hr^0(X(\Rb),\Zb/2)$. By Theorem \ref{thKvH}, there exists a class~$\alpha\in\Hr^d_G(X(\Cb),\Zb/2)$ such that $\oev_{X}^d(\alpha)=\sigma$. Let 
$$(\beta,\gamma)\in \Hr^{d}_G(X(\Cb),\Zb(d))\oplus\Hr^{d}_G(X(\Cb),\Zb(d+1))$$
be a lift of $\alpha$ in \eqref{lesGdd+1}. Since the group $\Hr^d(G,\Zb(d+1))$ vanishes and the reduction mod~$2$ morphism $\Hr^d(G,\Zb(d))\to\Hr^d(G,\Zb/2)$ is an isomorphism, we see that~$\ev_X^d(\beta)=\sigma$. This proves the proposition.
\end{proof}

\begin{cor}
\label{corod=1}
If $X$ is a smooth curve over $\Rb$, then $\ev^1_X$ is surjective.
\end{cor}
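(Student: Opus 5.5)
The plan is to deduce the corollary from Proposition~\ref{propno2torsion} with $d=1$. That proposition says $\ev^1_X$ is surjective as soon as $\Hr^2(X(\Cb),\Zb)[2]=0$. So it suffices to check that, for a smooth curve $X$ over $\Rb$, the group $\Hr^2(X(\Cb),\Zb)$ has no $2$-torsion.

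We may reduce to the case where $X$ is connected. The evaluation map and the cohomology groups both decompose as products over the connected components of $X$, and surjectivity can be checked componentwise. If a component of $X$ is not geometrically connected, then $X(\Cb)$ is a disjoint union of two conjugate curves and $X(\Rb)=\emptyset$, so surjectivity onto $\Hr^0(\emptyset,\Zb/2)=0$ is trivial. (In any case the torsion computation below is done separately on each connected component of $X(\Cb)$, so this reduction is not strictly necessary.)

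Now let $C$ be a connected component of $X(\Cb)$. It is a connected Riemann surface, i.e.\ an oriented topological surface, either compact or noncompact.
\begin{itemize}
\item If $C$ is compact, then $\Hr^2(C,\Zb)\simeq\Zb$ is torsion-free.
\item If $C$ is noncompact, then $\Hr^2(C,\Zb)=0$. This holds because a noncompact connected surface deformation retracts onto a graph; in the algebraic setting, an affine curve is homotopy equivalent to a wedge of circles.
\end{itemize}
In both cases $\Hr^2(X(\Cb),\Zb)[2]=0$, and Proposition~\ref{propno2torsion} applies.

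The only step requiring any care is the vanishing of $2$-torsion in $\Hr^2$ of a (possibly noncompact) Riemann surface. An alternative argument is via the universal coefficient theorem: the torsion of $\Hr^2(C,\Zb)$ is the torsion of $\Hr_1(C,\Zb)$, which is free for an orientable surface, compact or not. This step is standard and presents no real obstacle.
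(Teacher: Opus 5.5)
Your proposal is correct and is the paper's argument: the corollary is stated with no proof, as an immediate consequence of Proposition~\ref{propno2torsion} with $d=1$, since $\Hr^2(X(\Cb),\Zb)$ is torsion-free for a smooth curve ($\Zb$ on each compact component of $X(\Cb)$, $0$ on each noncompact one). Your reduction to connected components is harmless but unnecessary.
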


\subsection{Hypersurfaces and complete intersections}

The following proposition is a real incarnation of the weak Lefschetz theorem. A different argument based on the Hochschild--Serre spectral sequence appears in \cite[Proof of Proposition 5.1]{BWWu2024}.

\begin{prop}
\label{prop:weak_lefschetz}
Fix $c,d\geq 0$. Let $Y$ be a smooth projective variety over $\Rb$, of dimension $c+d$. Let $\Ec$ be an ample vector bundle of rank $c$ on $Y$. Let $\tau\in \Hr^0(Y,\Ec)$ be such that the zero locus $X\subset Y$ of $\tau$ is smooth of codimension $c$. Let $M$ be a~$G$\nobreakdash-module. Then the restriction homomorphism 
$$\Hr_G^j(Y(\Cb),M)\to\Hr_G^j(X(\Cb),M)$$
is an isomorphism for $j<d$ and is injective if $j=d$.
\end{prop}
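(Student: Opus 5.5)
The plan is to reduce to a statement about the relative cohomology of the pair $(Y(\Cb),X(\Cb))$, or equivalently about the $G$-equivariant cohomology with compact supports of the open complement $U:=Y\setminus X$. Consider the long exact sequence of the pair in $G$-equivariant cohomology:
$$\cdots\to\Hr^j_G(Y(\Cb),X(\Cb);M)\to\Hr^j_G(Y(\Cb),M)\to\Hr^j_G(X(\Cb),M)\to\Hr^{j+1}_G(Y(\Cb),X(\Cb);M)\to\cdots$$
The proposition follows once we know that $\Hr^j_G(Y(\Cb),X(\Cb);M)=\Hr^j_{G,c}(U(\Cb),M)$ vanishes for $j\leq d$. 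That vanishing gives injectivity for $j\leq d$ and surjectivity for $j<d$.

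To prove the vanishing, use the Hochschild--Serre type spectral sequence
$$E_2^{p,q}=\Hr^p(G,\Hr^q_c(U(\Cb),M))\Rightarrow\Hr^{p+q}_{G,c}(U(\Cb),M).$$
Equivalently, write $\Hr^*_{G,c}(U(\Cb),M)$ as the cohomology of $G$ with values in the complex $R\Gamma_c(U(\Cb),M)$. This reduces everything to the non-equivariant statement $\Hr^q_c(U(\Cb),M)=0$ for $q\leq d$, where $M$ is viewed as an abelian group. Indeed, if $R\Gamma_c(U(\Cb),M)$ is concentrated in degrees $\geq d+1$, then so is $R\Gamma(G,-)$ applied to it, since group cohomology lives in nonnegative degrees. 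There is one subtlety: $\Hr^p(G,-)$ is nonzero in infinitely many degrees, but only degrees $p\geq 0$ occur, so a complex concentrated in degrees $\geq d+1$ keeps that property. The boundedness of the spectral sequence from below makes this rigorous.

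The non-equivariant vanishing is the classical weak Lefschetz theorem for zero loci of sections of ample bundles. Since $\Ec$ is ample of rank $c$ and $X=\{\tau=0\}$, the complement $U$ is affine when $c=1$. In general one uses Sommese's theorem: $U(\Cb)$ has the homotopy type of a CW complex of real dimension $\leq \dim Y+c-1=d+2c-1$. Poincaré--Lefschetz duality, with $U(\Cb)$ an oriented manifold of real dimension $2(c+d)$, then gives
$$\Hr^q_c(U(\Cb),M)\simeq \Hr_{2c+2d-q}(U(\Cb),M)=0\quad\text{for }2c+2d-q>d+2c-1,$$
that is, for $q\leq d$. Alternatively, one can pass to $\mathbb{P}(\Ec^\vee)$ and apply the ample line bundle case there, which is the standard reduction. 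Duality is valid for any coefficient group $M$ once we use homology with coefficients in $M$.

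The main obstacle is the topological input for $c\geq 2$, namely the bound on the homotopy dimension of $U(\Cb)$ from ampleness of $\Ec$ (Sommese's vanishing/Lefschetz theorem). I would cite it rather than reprove it. Everything else is formal: the compatibility of the equivariant long exact sequence of the pair with the spectral sequence, and the observation that $G$ acting on the coefficients does not affect the non-equivariant vanishing.
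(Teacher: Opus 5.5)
Your proof is correct. It shares the first step with the paper: the long exact sequence of the pair reduces everything to the vanishing of $\Hr^j_G(Y(\Cb),X(\Cb),M)$ for $j\leq d$. After that the two arguments diverge.

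The paper does not use the Hochschild--Serre spectral sequence. It proves the vanishing by induction on $j$, simultaneously for all $G$-modules $M$. It applies the real-complex long exact sequence to $\Fc:=f_!M$, where $f$ is the inclusion of $U(\Cb)$ into $Y(\Cb)$, which gives an exact sequence
\[
\Hr^{j-1}_G(Y(\Cb),X(\Cb),M(1))\to\Hr^j_G(Y(\Cb),X(\Cb),M)\to\Hr^j(Y(\Cb),X(\Cb),M).
\]
The left term vanishes by induction; this is why all $M$ must be treated at once, since the twist $M(1)$ appears. The right term vanishes by Sommese's theorem.

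Comparing the two routes:
\begin{enumerate}
\item Your first-quadrant spectral sequence handles all degrees in one stroke. It also works verbatim for any finite group and any equivariant sheaf. Its cost is invoking the Hochschild--Serre spectral sequence for equivariant sheaf cohomology.
\item The paper's dimension-shifting argument avoids spectral sequences entirely. It uses only the short exact sequence $0\to\Fc\to\Fc[G]\to\Fc(1)\to 0$, which is tailored to $G=\Zb/2$.
\end{enumerate}
The paper itself remarks, just before the statement, that a Hochschild--Serre argument of exactly your type appears in Benoist--Wittenberg's work on the Wu relations.

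The non-equivariant input is identical in both proofs: $\Hr^q(Y(\Cb),X(\Cb),M)=0$ for $q\leq d$, with arbitrary coefficients. The paper only asserts that Lazarsfeld's proof of Sommese's theorem goes through for any coefficient group. You make this explicit: $U(\Cb)$ has homotopy dimension at most $d+2c-1$, and Poincar\'e--Lefschetz duality holds with coefficients in $M$. That is a useful clarification.
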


\begin{proof}
Using the long exact sequence of $G$-equivariant cohomology of the pair of $G$\nobreakdash-spaces $(Y(\Cb),X(\Cb))$, one reduces to proving the vanishing of $\Hr^j_G(Y(\Cb),X(\Cb),M)$ for $j\leq d$. We prove it, for all $M$, by induction on $j$. Let $f:Y(\Cb)\setminus X(\Cb)\hookrightarrow Y(\Cb)$ be the inclusion. Applying \eqref{rcles} to $\Fc:=f_!M$ yields an exact sequence
\begin{equation}
\label{realcomplex}
\Hr_G^{j-1}(Y(\Cb),X(\Cb),M(1))\to\Hr_G^j(Y(\Cb),X(\Cb),M)\to\Hr^j(Y(\Cb),X(\Cb),M).
\end{equation}
The left term of \eqref{realcomplex} vanishes by the induction hypothesis. By Sommese's weak Lefschetz theorem (see \cite[Theorem~7.1.1]{lazarsfeldPositivityII} whose proof applies
with coefficients in any abelian group $M$), the group $\Hr^j(Y(\Cb),X(\Cb),M)$ vanishes because~$j\leq d$. The middle term of \eqref{realcomplex} therefore vanishes and the proof is complete.
\end{proof}

\begin{prop}
\label{prop:image_global_sign_hypersurface}
Let $X\subset \mathbb{P}_\Rb^{c+d}$ be a smooth complete intersection of codimension~$c$. Fix $0\leq j\leq d-1$. Then the image of $\oev^j_X$ is generated by $1\in \Hr^0(X(\Rb),\Zb/2)$.

In particular, if $X(\Rb)$ is not connected, then $\oev^j_X$ is not surjective.
\end{prop}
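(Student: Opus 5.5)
The plan is to reduce to projective space using Proposition \ref{prop:weak_lefschetz}, and then to compute evaluation maps on $\mathbb{P}^{c+d}_\Rb$, whose real locus is connected. I take $Y:=\mathbb{P}^{c+d}_\Rb$ and $\Ec:=\bigoplus_{i=1}^c\Oc(d_i)$, where $d_1,\dots,d_c$ are the degrees of the equations defining $X$. This $\Ec$ is an ample vector bundle of rank $c$, and $X$ is the zero locus of the section $\tau=(f_1,\dots,f_c)$ given by those equations. Applying Proposition \ref{prop:weak_lefschetz} with $M=\Zb/2$ shows that the restriction map
$$\Hr^j_G(\mathbb{P}^{c+d}(\Cb),\Zb/2)\to\Hr^j_G(X(\Cb),\Zb/2)$$
is an isomorphism for $j<d$, hence surjective for $0\leq j\leq d-1$.

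Next I use that evaluation is compatible with restriction. For $\alpha\in\Hr^j_G(\mathbb{P}^{c+d}(\Cb),\Zb/2)$ and $x\in X(\Rb)$, we have $(\alpha|_{X})|_x=\alpha|_x$. So $\oev^j_X(\alpha|_X)$ is the restriction to $X(\Rb)$ of $\oev^j_{\mathbb{P}^{c+d}}(\alpha)$. Since $\mathbb{P}^{c+d}(\Rb)$ is connected, $\oev^j_{\mathbb{P}^{c+d}}(\alpha)$ is a constant function, equal to $0$ or $1$. Combining this with the surjectivity from the first step, the image of $\oev^j_X$ is contained in the subgroup $\{0,1\}$ of constant functions.

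Conversely, $1$ lies in the image. As noted in the introduction, the composition $\Hr^j_G(\pt,\Zb/2)\to\Hr^j_G(X(\Cb),\Zb/2)\to\Hr^0(X(\Rb),\Zb/2)$ sends the generator to the constant function $1$. Hence the image of $\oev^j_X$ is exactly the subgroup generated by $1$. If $X(\Rb)$ is not connected, then $\Hr^0(X(\Rb),\Zb/2)$ has rank at least $2$, so this image is a proper subgroup and $\oev^j_X$ is not surjective.

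The only step needing care is checking the hypotheses of Proposition \ref{prop:weak_lefschetz}. Smoothness of the zero locus of codimension $c$ is part of the hypothesis that $X$ is a smooth complete intersection. The degenerate case $d=0$ is vacuous, since there is no $j$ with $0\leq j\leq d-1$. I therefore expect no real obstacle here.
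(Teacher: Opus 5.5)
Your proposal is correct and follows essentially the same route as the paper: restriction from $\mathbb{P}^{c+d}_\Rb$ is bijective in degrees $j\leq d-1$ by Proposition~\ref{prop:weak_lefschetz}, evaluation commutes with restriction, $\mathbb{P}^{c+d}(\Rb)$ is connected, and classes pulled back from $\Hr^j_G(\pt,\Zb/2)$ show that $1$ lies in the image. Your explicit choice of $\Ec=\bigoplus_{i=1}^c\Oc(d_i)$ just spells out a hypothesis check that the paper leaves implicit.
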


\begin{proof}
Set $Y:=\mathbb{P}^{c+d}_{\Rb}$. Consider the commutative diagram
\begin{equation*}
\label{diagoev}
\begin{tikzcd}[row sep=small, column sep=normal]
	{\Hr_G^{j}(Y(\Cb),\Zb/2)}&{\Hr^0(Y(\Rb),\Zb/2)} \\
	{\Hr_G^{j}(X(\Cb),\Zb/2)} &{\Hr^0(X(\Rb),\Zb/2)}
	\arrow["{\oev^j_{Y}}", from=1-1, to=1-2]
	\arrow["\rotatebox{90}{$\sim$}"', from=1-1, to=2-1]
	\arrow[from=1-2, to=2-2]
	\arrow["{\oev^j_{X}}", from=2-1, to=2-2]
\end{tikzcd}
\end{equation*}
whose vertical arrows are restriction maps. The left vertical arrow is bijective for~$j\leq d-1$ by Proposition \ref{prop:weak_lefschetz}. The image of~$\oev^j_X$ is therefore included in the image of $\Hr^0(Y(\Rb),\Zb/2)\to \Hr^0(X(\Rb),\Zb/2)$. As $Y(\Rb)$ is connected, this shows that the image of $\oev^j_X$ is included in the subgroup generated by $1$ (hence equal to it as one sees by considering the classes in the image of $\Hr^j_G(\pt,\Zb/2)\to \Hr^j_G(X(\Cb),\Zb/2)$).
\end{proof}

Here are interesting examples of varieties to which Proposition \ref{prop:image_global_sign_hypersurface} applies.

\begin{cor}
\label{corocubic}
Let $X$ be a projective variety of dimension $d\geq 1$ over $\Rb$ that is either a smooth cubic hypersurface, or a smooth complete intersection of two quadrics. Suppose that $X(\Rb)$ is not connected. Then $\oev_X^{d-1}$ is not surjective.
\end{cor}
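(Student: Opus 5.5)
This is a direct application of Proposition~\ref{prop:image_global_sign_hypersurface} with $j=d-1$, once we check that $X$ is a smooth complete intersection in some projective space of the right dimension.

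First I place $X$ in that setting. A smooth cubic hypersurface of dimension $d$ is by definition a smooth complete intersection of codimension $c=1$ in $\Pb^{d+1}_\Rb$. A smooth complete intersection of two quadrics of dimension $d$ is a smooth complete intersection of codimension $c=2$ in $\Pb^{d+2}_\Rb$. In both cases $X\subset\Pb^{c+d}_\Rb$ with the required codimension. This is exactly the hypothesis of Proposition~\ref{prop:image_global_sign_hypersurface}, which through Proposition~\ref{prop:weak_lefschetz} uses the ample bundle $\Oc(3)$, or $\Oc(2)^{\oplus 2}$, on $Y=\Pb^{c+d}_\Rb$.

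Since $d\geq 1$, the index $j:=d-1$ lies in the allowed range $0\leq j\leq d-1$. Proposition~\ref{prop:image_global_sign_hypersurface} then says that the image of $\oev^{d-1}_X$ is the subgroup generated by the constant class $1$ in $\Hr^0(X(\Rb),\Zb/2)$.

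Finally, suppose $X(\Rb)$ is not connected. Then it has at least two connected components, so $\Hr^0(X(\Rb),\Zb/2)$ has order at least $4$. The subgroup generated by $1$ has order $2$, so it is proper, and hence $\oev^{d-1}_X$ is not surjective. This is precisely the ``in particular'' clause of the proposition.

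There is no real obstacle. The only points to check are the identification of the ambient projective space and codimension, and that $d\geq1$ makes $j=d-1$ admissible.
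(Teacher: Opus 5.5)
Your proposal is correct and is exactly the paper's argument: the corollary is an immediate application of Proposition~\ref{prop:image_global_sign_hypersurface} with $j=d-1$, the cubic sitting in $\Pb^{d+1}_\Rb$ with $c=1$ and the intersection of two quadrics in $\Pb^{d+2}_\Rb$ with $c=2$. The ``in particular'' clause of that proposition then gives non-surjectivity when $X(\Rb)$ is disconnected.
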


\subsection{Restriction to the real locus}
\label{parrestr}

Let $X$ be a smooth proper variety of dimension~$d$ over $\Rb$. For $j\in\Zb$, consider the composition
\begin{equation}
\label{restrdecompo}
r_X^j:\Hr^j_G(X(\Cb),\Zb/2)\to\Hr_G^j(X(\Rb),\Zb/2)\isoto\bigoplus_{i=0}^j\Hr^i(X(\Rb),\Zb/2)
\end{equation}
of the restriction to the real locus and of the canonical decomposition \cite[(1.26)]{benoistIntegralHodgeConjecture2020}. Let $V^j_X\subset \bigoplus_{i=0}^j\Hr^i(X(\Rb),\Zb/2)$ be the image of $r^j_X$. For $\alpha\in \Hr^j_G(X(\Cb),\Zb/2)$, write 
$$r^j_X(\alpha)=([\alpha]_0,\dots,[\alpha]_j)\quad\text{in }\bigoplus_{i=0}^j\Hr^i(X(\Rb),\Zb/2).$$

 To prove Theorem \ref{thm:topological_argument_integral_case} in \S\ref{parprodEnriques}, we need to control $V_X^j$ for $j=d$. Theorem~\ref{thm:image_of_total_restriction_to_the_real_locus} below allows us to do so, as an application of the duality theorem \cite[Theorem~1.12]{benoistIntegralHodgeConjecture2020}. To state it, we introduce some notation. Let $w_X=(w_{i,X})_{i\geq 0}\in\Hr^*(X(\Rb),\Zb/2)$ be the total Stiefel--Whitney class of the tangent bundle of $X(\Rb)$. Let
$$\deg:\Hr^*(X(\Rb),\Zb/2)\to \Zb/2$$
be the composition of the projection $\Hr^*(X(\Rb),\Zb/2)\to \Hr^d(X(\Rb),\Zb/2)$ and of the sum $\Hr^d(X(\Rb),\Zb/2)\to\Zb/2$ of the degree maps of all connected components of~$X(\Rb)$.
Poincar\'e duality and the invertibility of $w_X\in \Hr^*(X(\Rb),\Zb/2)$ show that the pairing 
\begin{equation}
\label{Poincare}
(x,y)\mapsto\deg(x\smile y\smile w_X^{-1})
\end{equation} 
on $\Hr^*(X(\Rb),\Zb/2)$ is nondegenerate. 

\begin{thm}
\label{thm:image_of_total_restriction_to_the_real_locus}
Let $X$ be a smooth proper variety of dimension $d$ over~$\Rb$. For~$j\in\Zb$, the subspaces $V^j_X$ and $V_X^{2d-j-1}$ of $\Hr^*(X(\Rb),\Zb/2)$  are exact orthogonal complements with respect to the nondegenerate pairing \eqref{Poincare}.
\end{thm}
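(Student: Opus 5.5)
The plan is to deduce the statement from the duality theorem \cite[Theorem~1.12]{benoistIntegralHodgeConjecture2020} for $G$-equivariant cohomology of the smooth proper $G$-manifold $X(\Cb)$ of real dimension $2d$. Two inputs are needed:
\begin{enumerate}[label=(\alph*)]
\item a perfect pairing $\Hr^j_G(X(\Cb),\Zb/2)\times\Hr^{2d-j-1}_G(X(\Cb),\Zb/2)\to\Zb/2$ in the appropriate sense;
\item a localization formula expressing this pairing, or more precisely its interaction with restriction to $X(\Rb)$, in terms of the Poincar\'e-type pairing \eqref{Poincare} on $\Hr^*(X(\Rb),\Zb/2)$.
\end{enumerate}
The degree shift $2d-j-1$ (rather than $2d-j$) is the usual feature of Tate/Borel-type duality for free-away-from-fixed-points $G$-actions. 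It reflects that the equivariant fundamental class lives in degree $2d$, but the relevant trace on the fixed locus involves one fewer degree. The $w_X^{-1}$ in \eqref{Poincare} should come from the equivariant Euler class of the normal bundle of $X(\Rb)$ in $X(\Cb)$. This normal bundle is $iT_{X(\Rb)}\simeq T_{X(\Rb)}$ with $G$ acting by $-1$, and its equivariant Euler class decomposes as $\sum_i w_i(X)\,u^{d-i}$ under the canonical decomposition \cite[(1.26)]{benoistIntegralHodgeConjecture2020}.

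Step 1 is the key computation. For $\alpha\in\Hr^j_G(X(\Cb),\Zb/2)$ and $\beta\in\Hr^{2d-j-1}_G(X(\Cb),\Zb/2)$, I want to show that the quantity $\deg(r(\alpha)\smile r(\beta)\smile w_X^{-1})$ vanishes. Here the product is computed in $\Hr^*_G(X(\Rb),\Zb/2)=\Hr^*(X(\Rb),\Zb/2)[u]$, and one extracts the appropriate coefficient, which matches the pairing on the truncated components $([\alpha]_i)$ and $([\beta]_i)$. The tool is the localization theorem: the equivariant pushforward to a point of $\alpha\smile\beta$ equals the pushforward from $X(\Rb)$ of $(\alpha\smile\beta)|_{X(\Rb)}$ divided by the Euler class $e=u^d w_X(u^{-1})$. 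The left side lies in $\Hr^{-1}_G(\pt,\Zb/2)=0$, since the pushforward lowers degree by $2d$. Expanding the right side in $u$ and reading off the coefficient of $u^{-1}$ gives exactly $\deg(r^j_X(\alpha)\smile r^{2d-j-1}_X(\beta)\smile w_X^{-1})=0$. This shows $V^j_X\perp V^{2d-j-1}_X$. Checking carefully that the indexing of the canonical decomposition produces precisely the pairing \eqref{Poincare} on the truncated sums is routine but fiddly bookkeeping.

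Step 2 is the dimension count: $\dim V^j_X+\dim V^{2d-j-1}_X=\dim\Hr^*(X(\Rb),\Zb/2)$. Here I would invoke \cite[Theorem~1.12]{benoistIntegralHodgeConjecture2020}, which I expect provides exactly a perfect pairing between the image of restriction to the real locus in degree $j$ and the cokernel-type object in complementary degree $2d-j-1$. Equivalently, it says the orthogonal complement of $V^j_X$ is contained in $V^{2d-j-1}_X$. A concrete route is to use the long exact sequence of the pair $(X(\Cb),X(\Rb))$ in equivariant cohomology. The relative groups $\Hr^*_G(X(\Cb),X(\Rb),\Zb/2)$ are the cohomology of a free $G$-space, so they are dual to the corresponding groups in complementary degree by equivariant Poincar\'e--Lefschetz duality for the free quotient. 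Combining this with duality on $X(\Rb)$, where the twist by $w_X^{-1}$ is absorbed by the Euler class as in Step 1, identifies the kernel of $\Hr^j_G(X(\Rb))\to\Hr^{j+1}_G(X(\Cb),X(\Rb))$, namely $V^j_X$ after applying the decomposition, with the annihilator of $V^{2d-j-1}_X$.

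I expect Step 2 to be the main obstacle. Specifically, the difficulty is verifying that the duality of \cite[Theorem~1.12]{benoistIntegralHodgeConjecture2020} is compatible with the localization pairing of Step 1 on the nose, including the Stiefel--Whitney twist. My first attempt would be to reduce to the compatibility of the Gysin/boundary maps in the pair sequence with the Thom class of the normal bundle, whose restriction to the zero section is the Euler class $e$. I would then use that $e$ is invertible after inverting $u$ to match the two pairings.
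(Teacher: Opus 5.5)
Your Step~1 is correct. The localization formula for the equivariant pushforward $X(\Cb)\to\pt$, with normal Euler class $e=\sum_i w_{i,X}u^{d-i}$, gives $\deg(r^j_X(\alpha)\smile r^{2d-j-1}_X(\beta)\smile w_X^{-1})=0$, so $V^j_X\perp V^{2d-j-1}_X$. The gap is Step~2, which carries all the content, namely maximality.

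Your first option is circular. What you expect \cite[Theorem~1.12]{benoistIntegralHodgeConjecture2020} to say, that the orthogonal complement of $V^j_X$ lies in $V^{2d-j-1}_X$, is the theorem itself. What the paper actually extracts from that duality theorem concerns different objects: the maps $z_k:\Hr^k_G(X(\Cb)\setminus X(\Rb),\Zb/2)\to\bigoplus_{i=k-d+1}^{k}\Hr^i(X(\Rb),\Zb/2)$ (called $w_k$ there), whose images in degrees $k$ and $2d-k-1$ are orthogonal complements for $\deg(x\smile y\smile w_X)$. The real proof is the bridge to $V^j_X$:
\begin{enumerate}[label=(\roman*)]
\item reduce to $j\geq d$;
\item use the canonically split exact sequence $0\to\Hr^j_{G,X(\Rb)}(X(\Cb),\Zb/2)\to\Hr^j_G(X(\Cb),\Zb/2)\to\Hr^j_G(X(\Cb)\setminus X(\Rb),\Zb/2)\to0$ of \cite[Proposition~1.3]{benoistIntegralHodgeConjecture2020}, with section $\tau$;
\item identify the support term by purity with $\bigoplus_{i\le j-d}\Hr^i(X(\Rb),\Zb/2)$, on which $r^j_X$ is $\smile w_X$;
\item use $z_j=(\smile w_X^{-1})\circ r^j_X\circ\tau$ to get $V^j_X\smile w_X^{-1}=\bigl(\bigoplus_{i\le j-d}\Hr^i(X(\Rb),\Zb/2)\bigr)\oplus\Im(z_j)$.
\end{enumerate}
After that, the argument is linear algebra.

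Your second option can be made to work, but as written it only reproves Step~1, because it omits the one input responsible for maximality. Set $k:=2d-j-1$. Poincar\'e--Lefschetz duality on the free quotient pairs $\Hr^{j+1}_G(X(\Cb),X(\Rb),\Zb/2)$ with $\Hr^{k}_G(X(\Cb)\setminus X(\Rb),\Zb/2)$, not with $\Hr^{k}_G(X(\Cb),\Zb/2)$. So the kernel of the coboundary $\Hr^j_G(X(\Rb),\Zb/2)\to\Hr^{j+1}_G(X(\Cb),X(\Rb),\Zb/2)$ is the annihilator of the restrictions to $\Pb(N)=S(N)/G$ (with $N$ the normal bundle) of \emph{all} classes on the complement. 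This is a priori strictly smaller than $(V^k_X)^\perp$. To close the argument you need three things.
\begin{enumerate}[label=(\alph*)]
\item Reduce to $j\geq d$. Only then is $\Hr^j_G(X(\Rb),\Zb/2)=\bigoplus_{i\le j}\Hr^i(X(\Rb),\Zb/2)$ all of $\Hr^*(X(\Rb),\Zb/2)$; the case $j<d$ follows by symmetry and nondegeneracy.
\item Show that every class in $\Hr^k_G(X(\Cb)\setminus X(\Rb),\Zb/2)$ extends to $X(\Cb)$. The obstruction lies in $\Hr^{k+1}_{G,X(\Rb)}(X(\Cb),\Zb/2)$. This group is $0$ for $k\le d-2$. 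For $k=d-1$ it is $\Hr^0(X(\Rb),\Zb/2)$, which maps to $\Hr^d_G(X(\Cb),\Zb/2)$ via equivariant fundamental classes. That map is injective because $\oev^d_X$ is a left inverse \cite[Remark~1.6\,(ii)]{benoistIntegralHodgeConjecture2020}; alternatively, its composite with restriction to $X(\Rb)$ is multiplication by $e$, which is injective on $\Hr^*(X(\Rb),\Zb/2)[u]$.
\item Compute the boundary pairing. Write $\pi:\Pb(N)\to X(\Rb)$ and let $\rho$ be the pullback, which sends $u$ to the tautological class $\xi$. Since $N\simeq T_{X(\Rb)}$, one has $\pi_*\xi^{d-1+m}=(w_X^{-1})_m$, which gives $\int_{\Pb(N)}\rho(x)\smile\rho(r^k_X(\beta))=\deg(x\smile r^k_X(\beta)\smile w_X^{-1})$.
\end{enumerate}
With (a)--(c), your route gives $V^j_X=(V^k_X)^\perp$ for $j\geq d$ directly. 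That would be a self-contained proof bypassing, and in effect reproving, \cite[Theorem~1.12]{benoistIntegralHodgeConjecture2020}. Without (b), the argument does not close.
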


\begin{proof}
We assume that $j\geq d$ (if not, replace $j$ with $2d-j-1$). Consider the~diagram
\vspace{-10pt}
\begin{equation*}
\begin{tikzcd}[row sep=small, column sep=small]
	0 & {\Hr_{G,X(\Rb)}^j(X(\Cb),\Zb/2)} & {\Hr_G^j(X(\Cb),\Zb/2)} & {\Hr_G^j(X(\Cb)\setminus X(\Rb),\Zb/2)} & 0\\
	& \bigoplus_{i=0}^{j-d}\Hr^i(X(\Rb),\Zb/2)& \Hr^*(X(\Rb),\Zb/2)& &
	\arrow[from=1-1, to=1-2]
	\arrow[from=1-2, to=1-3]
	\arrow[from=1-3, to=1-4]
        \arrow["\tau"'{xshift=7pt},shift right, curve={height=5pt}, yshift=0pt, start anchor={[xshift=12pt]}, end anchor={[xshift=-16pt]}, from=1-4, to=1-3]
        	\arrow[from=1-4, to=1-5]
	\arrow[equals, from=1-2, to=2-2]
	\arrow["r_X^j", from=1-3, to=2-3]
	\arrow["\smile w_X"{yshift=1.5pt}, from=2-2, to=2-3]
\end{tikzcd}
\end{equation*}
whose top row is the exact sequence of \cite[Proposition 1.3]{benoistIntegralHodgeConjecture2020}, where~$\tau$ is the section induced by the canonical splitting described in \cite[Proof of Proposition~1.3]{benoistIntegralHodgeConjecture2020} and whose left vertical equality stems from equivariant purity \cite[(1.20)]{benoistIntegralHodgeConjecture2020} and the canonical decomposition \cite[(1.26)]{benoistIntegralHodgeConjecture2020}. 

The square commutes by \cite[(1.39) and Remark 1.6\,(i)]{benoistIntegralHodgeConjecture2020}. It follows~that
\begin{equation}
\label{rj1}
r_X^j(\Hr_{G,X(\Rb)}^j(X(\Cb),\Zb/2))=\Big( \bigoplus_{i=0}^{j-d}\Hr^i(X(\Rb),\Zb/2)\Big)\smile w_X.
\end{equation}
For $k\in\Zb$, let $z_k:\Hr^k_G(X(\Cb)\setminus X(\Rb),\Zb/2)\to \bigoplus_{i=k-d+1}^{k}\Hr^i(X(\Rb),\Zb/2)$ be the map called $w_k$ in \cite[p.\,25]{benoistIntegralHodgeConjecture2020} (we change notation to avoid confusion with the Stiefel--Whitney classes $w_{i,X}$ of $X(\Rb)$). The very definitions of the maps $\tau$ and $z_j$ show that $z_j=(\smile w_X^{-1})\circ r_X^j\circ\tau$ 
and hence that 
\begin{equation}
\label{rj2}
\Big(r^j_X\circ\tau(\Hr_G^j(X(\Cb)\setminus X(\Rb),\Zb/2))\Big)\smile w_X^{-1}=\Im(z_j).
\end{equation}
Combining \eqref{rj1} and \eqref{rj2}, we see that 
\begin{equation}
\label{Rjw-1}
V^j_X\smile w_X^{-1}=\Big(\bigoplus_{i=0}^{j-d}\Hr^i(X(\Rb),\Zb/2)\Big)\oplus\Im(z_j)
\end{equation}
as subgroups of $\Hr^*(X(\Rb),\Zb/2)$. 
By \eqref{Rjw-1}, the orthogonal complement of $V^j_X$ with respect to \eqref{Poincare} is exactly the subgroup~$W\subset \bigoplus_{i=0}^{2d-j-1}\Hr^i(X(\Rb),\Zb/2)$ that is the orthogonal complement of $\Im(z_j)$ with respect to the nondegenerate pairing 
$$\Big(\bigoplus_{i=0}^{2d-j-1}\Hr^i(X(\Rb),\Zb/2)\Big)\times \Big(\bigoplus_{i=j-d+1}^{d}\Hr^i(X(\Rb),\Zb/2)\Big)\to \Zb/2$$
given by $(x,y)\mapsto \deg(x\smile y)$.

The duality theorem \cite[Theorem 1.12]{benoistIntegralHodgeConjecture2020} implies that the images of $z_j$ and~$z_{2d-j-1}$ are orthogonal complements of each other with respect to the nondegenerate pairing $(x,y)\mapsto \deg(x\smile y\smile w_X)$. We deduce from this fact that $W$ is the image of~$\im(z_{2d-j-1})\smile w_X$ by the projection to $\bigoplus_{i=0}^{2d-j-1}\Hr^i(X(\Rb),\Zb/2)$. The very definition of $z_{2d-j-1}$ now implies that ${W=V_X^{2d-j-1}}$, as wanted.
\end{proof}

\begin{rems}
\begin{enumerate}[label=(\roman*), leftmargin=2em] 
\item
\label{qui}
When $X$ is Galois maximal, Theorem \ref{thm:image_of_total_restriction_to_the_real_locus} is proved by Kharlamov and R\u{a}sdeaconu in \cite[Propositions 2.6\,(2) and 2.17]{KharlamovRasdeaconu2026}.
\item
\label{quii}
Theorem \ref{thm:image_of_total_restriction_to_the_real_locus} is reminiscent of \cite[Theorem 3.3]{benoistIntegralHodgeConjecture2020} (and both are applications of the duality theorem \cite[Theorem 1.12]{benoistIntegralHodgeConjecture2020}).
\item 
\label{quiii}
It is a matter of pure linear algebra to deduce \cite[Proposition~8.1]{benoistPeriodindexProblemReal2019} from Theorem \ref{thm:image_of_total_restriction_to_the_real_locus}. We think of Theorem \ref{thm:image_of_total_restriction_to_the_real_locus} as a generalization of this proposition, with a more elegant statement.
\item
\label{quiii}
As~$V_X^{d-1}\subset V_X^d$ are exact orthogonal complements with respect to a nondegenerate pairing on~$\Hr^*(X(\Rb),\Zb/2)$, it follows from Theorem \ref{thm:image_of_total_restriction_to_the_real_locus} that
$$\dim_{\Zb/2}(V_X^{d-1})\leq\frac{1}{2}\dim_{\Zb/2}(\Hr^*(X(\Rb),\Zb/2))\leq\dim_{\Zb/2}(V_X^{d}).$$
\end{enumerate}
\end{rems}

\subsection{Products with Enriques surfaces}
\label{parprodEnriques}

Let $S$ be an Enriques surface over $\Rb$. Then there exist exactly two finite \'etale double covers $S'_i\to S$ (for $i\in\{1,2\}$) such that $S'_i$ is geometrically connected (and hence a K3 surface). The images in~$S(\Rb)$ of the $S'_i(\Rb)$ are called the \emph{halves} of $S$ and form a partition of $S(\Rb)$ (see \cite[\S1.3]{degtyarevHalvesRealEnriques1996}).

\begin{thm}\label{thm:topological_argument_integral_case}
Let $E$ be an elliptic curve over $\Rb$ such that $E(\Rb)$ is not connected. Let $S$ be an Enriques surface over $\Rb$. Fix $d\geq 2$ and set~$X:=S\times E^{d-2}$. Then $\ev^d_X$ is surjective if and only if at least one of the following conditions holds:
\begin{enumerate}[label=(\roman*)] 
\item all connected components of $S(\Rb)$ have even Euler characteristic;
\item both halves of $S$ are nonempty.
\end{enumerate}
\end{thm}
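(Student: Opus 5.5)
Our approach is to first reduce the surjectivity of $\ev^d_X$ to a statement about the image $V^d_X$ of the restriction map $r^d_X$, and then compute that image using Theorem \ref{thm:image_of_total_restriction_to_the_real_locus}.

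\emph{Step 1: reduction to the top-degree part.} By \eqref{rcles} applied to $\Fc=\Zb(d)$, the image of reduction mod $2$, $\Hr^d_G(X(\Cb),\Zb(d))\to\Hr^d_G(X(\Cb),\Zb/2)$, is the kernel of the Bockstein to $\Hr^{d+1}_G(X(\Cb),\Zb(d+1))$. So $\ev^d_X$ is surjective if and only if every $\sigma\in\Hr^0(X(\Rb),\Zb/2)$ is $\oev^d_X(\alpha)=[\alpha]_0$ for some $\alpha$ killed by the Bockstein. We will express the Bockstein condition on the real locus. The relevant group $\Hr^{d+1}_G$ is in degree $d+1>d$, so by equivariant purity and \cite[Proposition 1.3]{benoistIntegralHodgeConjecture2020} it is detected, modulo classes coming from $X(\Cb)\setminus X(\Rb)$, by restriction to $X(\Rb)$. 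On $X(\Rb)$, where $G$ acts trivially, the Bockstein acts on $\bigoplus_i\Hr^i(X(\Rb),\Zb/2)$ by the componentwise integral Bocksteins. We therefore expect the obstruction to be a linear functional on $V^d_X$, computed via the pairing \eqref{Poincare}.

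\emph{Step 2: dual description.} By Theorem \ref{thm:image_of_total_restriction_to_the_real_locus}, $V^d_X$ is the orthogonal complement of $V^{d-1}_X$. Hence $\sigma$ is reachable under the extra constraint if and only if $\sigma$ pairs trivially with an explicit subspace built from $V^{d-1}_X$ and the Bocksteins. Concretely, we aim to show that the failure of surjectivity amounts to the existence of some $\alpha\in\Hr^{d-1}_G(X(\Cb),\Zb/2)$ with $\deg(\sigma\smile[\alpha]_d' \smile w_X^{-1})\neq 0$ for a suitable component. We will then use the Künneth formula for $X=S\times E^{d-2}$, both for $\Hr^*_G$ (via the Krasnov description, $E$ being Galois maximal since $E(\Rb)$ has two components) and for $\Hr^*(X(\Rb),\Zb/2)$. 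This reduces everything to the surface $S$: the factor $E^{d-2}$ contributes cleanly, because its restriction images are easy to compute (each circle class and the point classes are realized). This Künneth reduction is what forces the answer to depend only on $S$.

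\emph{Step 3: the surface case.} For $d=2$ we need $V^1_S$ and $V^2_S$, and the parity data on components. In $V^1_S$ we have $[\alpha]_0,[\alpha]_1$ for $\alpha\in\Hr^1_G(S(\Cb),\Zb/2)$; these include the classes of the two double covers $S'_i\to S$, whose degree-zero parts are the indicator functions of the halves (up to adding $1$). We also have $w_{1}$ and $w_2$ of $S(\Rb)$, and $\deg(w_2)$ on a component equals its Euler characteristic mod $2$. The orthogonality computation should show the following: the only potential obstruction for a component sign $\sigma$ is the pairing of $\sigma\cdot w_2$ against a class in $V^{1}$ with nonzero degree-$0$ part equal to $1$ on one half and $0$ on the other. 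If all components have even Euler characteristic, this pairing vanishes. If both halves are nonempty, one can instead adjust by the double-cover class, which kills the obstruction. Conversely, if one half is empty and some component $C$ has odd Euler characteristic, we exhibit $\sigma=\mathbbm{1}_C$ and an explicit class in $V^{d-1}$ (the pullback of the Enriques $\Hr^1_G$ class, of degree-$0$ part $1$, crossed with $E$-factors) pairing nontrivially against it.

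The main obstacle is Step 1: identifying precisely how the Bockstein constraint translates into a condition on $V^d_X$. This requires control of the contribution from $X(\Cb)\setminus X(\Rb)$, and we will try to handle it through the map $z_{d+1}$ and the splitting $\tau$ used in the proof of Theorem \ref{thm:image_of_total_restriction_to_the_real_locus}. The second difficulty is correctly computing $\Hr^1_G(S(\Cb),\Zb/2)$ and its restriction, using $\pi_1(S(\Cb))=\Zb/2$ and the description of the halves in \cite{degtyarevHalvesRealEnriques1996}.
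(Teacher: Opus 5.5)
\textbf{There is a genuine gap, mostly in the ``if'' direction.}

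\emph{The ``if'' direction.} Your plan only ever produces a mod $2$ class with prescribed data on $X(\Rb)$, and nothing in it makes that class lift to $\Hr^d_G(X(\Cb),\Zb(d))$. The relevant Bockstein is the one of $0\to\Zb(d)\xrightarrow{2}\Zb(d)\to\Zb/2\to0$, with target $\Hr^{d+1}_G(X(\Cb),\Zb(d))$. It is not $\Zb(d+1)$: reductions of $\Zb(d+1)$-classes have zero evaluation, and \eqref{rcles} is not the relevant sequence. Equivariant cohomology is determined by the real locus only in degrees $>2d$, not $>d$. The part coming from $X(\Cb)\setminus X(\Rb)$ is exactly what the real locus cannot see. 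In the proof of Proposition~\ref{propno2torsion} the obstruction is governed by $\Hr^{d+1}(X(\Cb),\Zb)[2]$, which for an Enriques surface is $\Hr^3(S(\Cb),\Zb)=\Zb/2\neq0$. So the obstruction is not a linear functional on $V^d_X$, and $z_{d+1}$ and $\tau$ will not turn it into one.

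The paper proceeds as follows.
\begin{enumerate}
\item It reduces to $d=2$ by cupping with classes pulled back from $E$, using that $\ev^1_E$ is surjective (Corollary~\ref{corod=1}). No equivariant K\"unneth formula is needed.
\item It takes $\beta\in\Hr^2_G(S(\Cb),\Zb/2)$ with $\oev^2_S(\beta)=\sigma$ and $[\beta]_1=0$, using \cite[Proposition~8.1]{benoistPeriodindexProblemReal2019}. This needs $[\alpha]_1\smile w_{1,S}=0$ for all $\alpha\in\Hr^1_G(S(\Cb),\Zb/2)$ with $[\alpha]_0=0$, which is where (i) or (ii) enters.
\item It does not lift $\beta$ itself. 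Instead it lifts the image $\gamma\in\Hr^3_G(S(\Cb),\Zb/2)$ of $\beta$ under the boundary of $0\to\Zb/2\to\Zb/2[G]\to\Zb/2\to0$. The lifting obstruction of $\gamma$ lies in $\Hr^4_G(S(\Cb),\Zb(3))$, which injects into $\Hr^4_G(S(\Cb),\Zb/2)\isoto\Hr^5_G(S(\Cb),\Zb/2)\isoto\Hr^5_G(S(\Rb),\Zb/2)$. Here the degree is $5>2\dim S$, so the obstruction genuinely is computed on the real locus, as $(0,[\gamma]_1,[\gamma]_1^2)=0$.
\item The integral lift $\delta$ comes from some $\varepsilon\in\Hr^2_G(S(\Cb),\Zb(2))$ because $\Hr^3(S(\Cb),\Zb)\to\Hr^3(S(\Cb),\Zb/2)$ is injective. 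Then $\ev^2_S(\varepsilon)=\ev^3_S(\delta)=\oev^2_S(\beta)=\sigma$.
\end{enumerate}
This detour through degree $3$ is the missing idea.

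\emph{The ``only if'' direction.} Your framework is right: orthogonality of $V^d_X$ and $V^{d-1}_X$, a product test class with $r^1_E=(0,[E(\Rb)])$ on the $E$-factors, and $\deg_{\Pi_S}(w_1^2)=\chi(\Pi_S)$ mod $2$. But two inputs that make it work are missing or wrong.

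First, you need what integrality forces on the real locus. If $\ovarepsilon$ is the reduction of a class in $\Hr^d_G(X(\Cb),\Zb(d))$, then $[\ovarepsilon]_1=0$ \cite[(1.33)]{benoistIntegralHodgeConjecture2020}. Your ``componentwise Bockstein'' description is incorrect. On $\Hr^*_G(X(\Rb),\Zb/2)$ the twisted Bockstein also involves cup product with the generator of $\Hr^1(G,\Zb/2)$, which mixes the components, and that is precisely what kills $[\ovarepsilon]_1$. Without this constraint no contradiction is possible, since $\oev^d_X$ is surjective by Theorem~\ref{thKvH}.

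Second, the test class on $S$ must satisfy $r^1_S(\alpha)=(0,w_{1,S})$. It is the class of the K3 cover that is surjective on real points, which exists because one half is empty. One has $[\alpha]_0=0$ since the cover splits over every real point, and $[\alpha]_1=w_{1,S}$ by Krasnov, since the associated $2$-torsion line bundle is $K_S$. With your class of degree-$0$ part $1$, $r^{d-1}_X(\gamma)$ acquires a nonzero component in degree $d-2$. The pairing then picks up the uncontrolled term $\deg([\ovarepsilon]_2\smile q_1^*[E(\Rb)]\smile\cdots\smile q_{d-2}^*[E(\Rb)])$; for $d=2$ it is $\deg(\sigma w_{1,S}^2)+\deg([\ovarepsilon]_2)$. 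Only when $[\alpha]_0=0$ does the pairing involve just $[\ovarepsilon]_0=\sigma$ and $[\ovarepsilon]_1=0$, and it then equals $\chi(\Pi_S)$ mod $2$.
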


\begin{proof}
We proceed in two steps. We freely use the notation introduced in \S\ref{parrestr}.

\begin{Step}
\label{Step1}
Assuming that $S(\Rb)$ has a connected component $\Pi_S$ of odd Euler characteristic and that only one half of $S$ is nonempty, we prove that $\ev^d_X$ is not surjective.
\end{Step}

Let $S'\to S$ be the degree $2$ finite \'etale cover of $S$ by a K3 surface that is surjective at the level of real points. Let $\alpha\in \Hr^1_G(S(\Cb),\Zb/2)=\Hr^1_{\et}(S,\Zb/2)$ (see \cite{coxEtaleHomotopyType1979} for this isomorphism) be the class of this cover. One has~$[\alpha]_0=0$ in~$\Hr^0(S(\Rb),\Zb/2)$ because the covering $S'\to S$ is trivial over the points of $S(\Rb)$. In addition, the class~$[\alpha]_1$ is the Borel--Haefliger class of the $2$\nobreakdash-torsion line bundle associated with~$S'\to S$ (see e.g.\,\cite[Lemma 2.1]{benoistPeriodindexProblemReal2019}). As this $2$\nobreakdash-torsion line bundle is nontrivial (because~$S'$ is geometrically connected), it must necessarily be the canonical bundle of~$S$ (because~$S$ is an Enriques surface), and we deduce from \cite[Theorem~3.2.1\,1)]{krasnovCharacteristicClasses1991} that~${[\alpha]_1=w_{1,S}}$ in~$\Hr^1(S(\Rb),\Zb/2)$. We have shown that $r^1_S(\alpha)=(0,w_{1,S})$.

Applying Theorem \ref{thm:image_of_total_restriction_to_the_real_locus} to $E$ (noting that $w_E=1$ because the tangent bundle is trivial) shows the existence of $\beta\in \Hr^1_G(E(\Cb),\Zb/2)$ with $r_E^1(\beta)=(0,[E(\Rb)])$, where~$[E(\Rb)]\in \Hr^1(E(\Rb),\Zb/2)$ is the fundamental class of $E(\Rb)$. (One could also construct $\beta$ directly as the class of an explicit double cover of $E$.)

Let $p:X\to S$ and $(p_i:X\to E)_{1\leq i\leq d-2}$ be the projection maps. Define
$$\gamma:=p^*\alpha\smile p_1^*\beta\smile\dots\smile p_{d-2}^*\beta\quad\text{in }\Hr^{d-1}_G(X(\Cb),\Zb/2).$$ Let~${q:X(\Rb)\to S(\Rb)}$ and $(q_i:X(\Rb)\to E(\Rb))_{1\leq i\leq d-2}$ be the projection maps. Set
\begin{equation}
\label{formuladelta}
\delta:=q^*w_{1,S}\smile q_1^*[E(\Rb)]\smile\dots\smile q_{d-2}^*[E(\Rb)]\quad\text{in }\Hr^{d-1}(X(\Rb),\Zb/2).
\end{equation}
As $r^1_S(\alpha)=(0,w_{1,S})$ and $r_E^1(\beta)=(0,[E(\Rb)])$, the formula \cite[(1.28)]{benoistIntegralHodgeConjecture2020} shows that $r^{d-1}_X(\gamma)=(0,\dots,0,\delta)$ in $\bigoplus_{i=0}^{d-1}\Hr^i(X(\Rb),\Zb/2)$.

Let $\Pi_E$ be any connected component of $E(\Rb)$, and define $\Pi:= \Pi_S\times(\Pi_E)^{d-2}$. Let~$\sigma \in \Hr^0(X(\Rb),\Zb/2)$ be the element taking the value $1$ on $\Pi$ and $0$ elsewhere. We claim that $\sigma$ is not in the image of $\ev^d_X$. Assume for contradiction that~${\ev^d_X(\varepsilon)=\sigma}$ for some $\varepsilon\in \Hr^d_G(X(\Cb),\Zb(d))$. Let $\ovarepsilon\in\Hr^d_G(X(\Cb),\Zb/2)$ be the reduction mod~$2$ of~$\varepsilon$. It follows from~\mbox{\cite[(1.33)]{benoistIntegralHodgeConjecture2020}} that $[\ovarepsilon]_1=0$. 

The equalities~$([\ovarepsilon]_0,[\ovarepsilon]_1)=(\sigma,0)$ and $r^{d-1}_X(\gamma)=(0,\dots,0,\delta)$,
combined with the fact that~$w_X=q^*w_S$ (because the tangent bundle of $E$ is trivial), show that 
\begin{equation}
\label{triple2}
r_X^d(\ovarepsilon)\smile r^{d-1}_X(\gamma)\smile w_X^{-1}=(0,\dots,0,\sigma\delta,q^*w_{1,S}\smile\sigma\delta)
\end{equation}
in $\Hr^*(X(\Rb),\Zb/2)$. Theorem \ref{thm:image_of_total_restriction_to_the_real_locus} implies that 
\begin{equation}
\label{triple1}
\deg(r_X^d(\ovarepsilon)\smile r^{d-1}_X(\gamma)\smile w_X^{-1})=0.
\end{equation}
In view of \eqref{formuladelta}, the degree of the class \eqref{triple2} is equal to $\deg_{\Pi_S}(w_1(\Pi_S)^2)$, that is, to $\chi(\Pi_S)$ mod $2$. Since this degree vanishes by \eqref{triple1}, we deduce that $\Pi_S$ has even Euler characteristic. This is the required contradiction. 

\begin{Step}
Assuming that all connected components of $S(\Rb)$ have even Euler characteristic or that both halves of $S$ are nonempty, we prove that $\ev^d_X$ is surjective.
\end{Step}

As $\ev^1_E$ is surjective by Corollary \ref{corod=1}, it suffices to show that $\ev^2_S$ is surjective. 

We may assume that $S(\Rb)\neq\varnothing$. Set $A:=\{\alpha\in \Hr^1_G(S(\Cb),\Zb/2)\mid [\alpha]_0=0\}$. The group $\Hr^1_G(S(\Cb),\Zb/2)$ has four elements, corresponding to the trivial cover~$S\sqcup S\to S$, to the cover $S_{\Cb}\to S$, and to the two K3 covers of $S$. If both halves of $S$ are nonempty, then $A$ contains only the trivial element. Otherwise, it has two elements: the trivial class and the class $\alpha$ of the K3 cover~$S'\to S$ which is surjective at the level of real points. The latter satisfies $[\alpha]_1=w_{1,S}$ (see Step \ref{Step1}), so~$[\alpha]_1\smile w_{1,S}=0$ if all connected components of $S(\Rb)$ have even Euler characteristic.

Define $B:=\{\beta\in \Hr^2_G(S(\Cb),\Zb/2)\mid [\beta]_1=0\}$. It follows from \cite[Proposition~8.1]{benoistPeriodindexProblemReal2019} 
that $(\oev^2_S)|_B$ is surjective if and only if $[\alpha]_1\smile w_{1,S}=0$ in $\Hr^2(S(\Rb),\Zb/2)$ for all $\alpha\in A$. By the above, this property is satisfied under our hypothesis on $S$.

Now fix $\sigma\in \Hr^0(S(\Rb),\Zb/2)$, and choose $\beta\in B$ such that $\oev^2_S(\beta)=\sigma$. Consider the exact commutative diagram of $G$-modules
\begin{equation}
\label{commexact}
\begin{tikzcd}[row sep=small, column sep=small]
	{0}&{\Zb(3)} & {\Zb[G]} & {\Zb(2)} & {0} \\
	{0}&{\Zb/2} & {\Zb/2[G]} & {\Zb/2} & {0}
	\arrow[from=1-1, to=1-2]
	\arrow[from=1-2, to=1-3]
	\arrow[from=1-2, to=2-2]
	\arrow[from=1-3, to=1-4]
	\arrow[from=1-3, to=2-3]
	\arrow[from=2-1, to=2-2]
	\arrow[from=2-2, to=2-3]
	\arrow[from=2-3, to=2-4]
	\arrow[from=1-4, to=2-4]
	\arrow[from=1-4, to=1-5]
	\arrow[from=2-4, to=2-5]
\end{tikzcd}
\end{equation}
whose vertical arrows are reduction mod $2$ morphisms. Taking $G$-equivariant cohomology in \eqref{commexact} and using \eqref{Shapiro} yields a commutative exact diagram
\begin{equation}
\label{commexact2}
\begin{tikzcd}[row sep=small, column sep=small]
	{\Hr^2_G(S(\Cb),\Zb(2))} & {\Hr^3_G(S(\Cb),\Zb(3))} & {\Hr^3(S(\Cb),\Zb)} \\
	{\Hr^2_G(S(\Cb),\Zb/2)} & {\Hr^3_G(S(\Cb),\Zb/2)} & {\Hr^3(S(\Cb),\Zb/2)}
	\arrow[from=1-1, to=2-1]
	\arrow[from=1-1, to=1-2]
	\arrow[from=1-2, to=1-3]
	\arrow[from=1-2, to=2-2]
	\arrow["\rotatebox{90}{$\sim$}"', from=1-3, to=2-3]
	\arrow[from=2-1, to=2-2]
	\arrow[from=2-2, to=2-3]
\end{tikzcd}
\end{equation}
whose right vertical arrow is an isomorphism because $S$ is an Enriques surface.

Let $\gamma\in \Hr^3_G(S(\Cb),\Zb/2)$ be the image of $\beta$ in \eqref{commexact2}. We claim that $\gamma$ lifts to a class $\delta\in\Hr^3_G(S(\Cb),\Zb(3))$ along the middle vertical arrow of \eqref{commexact2}. Taking this claim for granted, diagram \eqref{commexact2} shows that~$\delta$ further lifts to a class $\varepsilon\in \Hr^2_G(S(\Cb),\Zb(2))$. As 
$$\ev^2_S(\varepsilon)=\ev^3_S(\delta)=\oev^3_S(\gamma)=\oev^2_S(\beta)=\sigma,$$
this finishes the proof of the theorem.

We now prove the claim. Consider the commutative diagram
\begin{equation}
\label{compositions}
\begin{tikzcd}[row sep=small, column sep=tiny]
	{\Hr^3_G(S(\Cb),\Zb/2)} & {\Hr^4_G(S(\Cb),\Zb(3))} & {\Hr^4_G(S(\Cb),\Zb/2)} & {\Hr^5_G(S(\Cb),\Zb/2)} \\
	{\Hr^3_G(S(\Rb),\Zb/2)} & {\Hr^4_G(S(\Rb),\Zb(3))} & {\Hr^4_G(S(\Rb),\Zb/2)} & {\Hr^5_G(S(\Rb),\Zb/2)}\\
	{\Hr^*(S(\Rb),\Zb/2)} & {\Hr^1(S(\Rb),\Zb/2)} & {\Hr^*(S(\Rb),\Zb/2)} & {\Hr^*(S(\Rb),\Zb/2)}
	\arrow[from=1-1, to=2-1]
	\arrow[from=1-1, to=1-2]
	\arrow[hook, from=1-2, to=1-3]
	\arrow["\sim"', from=1-3, to=1-4]
	\arrow[from=1-2, to=2-2]
	\arrow[from=1-3, to=2-3]
	\arrow[from=2-1, to=2-2]
	\arrow[from=2-2, to=2-3]
	\arrow[from=2-3, to=2-4]
	\arrow["\rotatebox{90}{$\sim$}"', from=1-4, to=2-4]
	\arrow[equal, from=2-1, to=3-1]
	\arrow[equal, from=2-2, to=3-2]
	\arrow[equal, from=2-3, to=3-3]
	\arrow[equal, from=2-4, to=3-4]
\end{tikzcd}
\end{equation}
whose vertical maps are restriction maps, whose vertical equalities are the canonical decompositions \cite[(1.26) and (1.31)]{benoistIntegralHodgeConjecture2020}, and whose horizontal maps are, from left to right, boundary maps associated with $0\to\Zb(3)\xrightarrow{2}\Zb(3)\to\Zb/2\to 0$, reduction mod $2$ morphisms, and boundary maps of ${0\to\Zb/2\to\Zb/2[G]\to\Zb/2\to 0}$. The right vertical arrow of \eqref{compositions} is bijective by \cite[\S 1.1.3]{benoistIntegralHodgeConjecture2020}. The top right horizontal arrow of \eqref{compositions} is bijective by \eqref{rcles} applied with $\Fc=\Zb/2$ since $\Hr^5(S(\Cb),\Zb/2)=0$ (for dimension reasons) and $\Hr^4_G(S(\Cb),\Zb/2)\to \Hr^4(S(\Cb),\Zb/2)=\Zb/2$ is onto (it sends the class of a real point to a generator). The top middle horizontal arrow of~\eqref{compositions} is injective because $\Hr^4_G(S(\Cb),\Zb(3))$ is $2$-torsion. To see it, use \eqref{rcles} with~$\Fc=\Zb(2)$ and note that $\Hr^4_G(S(\Cb),\Zb(2))\to \Hr^4(S(\Cb),\Zb)=\Zb$ is onto (it sends the class of a real point to a generator) and that $\Hr^5_G(S(\Cb),\Zb(2))\isoto \Hr^5_G(S(\Rb),\Zb(2))$ is $2$-torsion (combine \cite[\S 1.1.3 and (1.31)]{benoistIntegralHodgeConjecture2020}).

As $[\beta]_1=0$, it follows from \cite[\S 1.2.4]{benoistIntegralHodgeConjecture2020} that $[\gamma]_1=0$. The composition $\Hr^3_G(S(\Rb),\Zb/2)\to \Hr^4_G(S(\Rb),\Zb/2)$ of the bottom left horizontal arrows of \eqref{compositions} is the boundary map associated with~$0\to\Zb/2\xrightarrow{}\Zb/4(3)\to\Zb/2\to 0$, and hence is the sum of the Bockstein and of the boundary map of ${0\to\Zb/2\to\Zb/2[G]\to\Zb/2\to 0}$ (see \cite[\S 2.2]{benoistPeriodindexProblemReal2019}). 
It now follows from \cite[(2.8)]{benoistPeriodindexProblemReal2019} and \cite[\S 1.2.4]{benoistIntegralHodgeConjecture2020} that the composition of the three bottom horizontal arrows of \eqref{compositions} is given, at the level of the canonical decompositions, by the formula $(a_0,a_1,a_2)\mapsto (0,a_1,a_1^2)$ (where~$a_i\in \Hr^i(S(\Rb),\Zb/2)$). Since $[\gamma]_1=0$, we deduce that $\gamma$ is mapped to zero in the bottom right group of~\eqref{compositions}. Diagram \eqref{compositions} therefore shows that $\gamma$ is mapped to zero in $\Hr^4_G(S(\Cb),\Zb(3))$ and hence that it lifts to a class $\delta\in\Hr^3_G(S(\Cb),\Zb(3))$.
\end{proof}

\begin{cor}
\label{corevd}
For all $d\geq 2$, there exists a smooth projective variety $X$ of dimension~$d$ over $\Rb$ such that $\ev^d_X$ is not surjective.
\end{cor}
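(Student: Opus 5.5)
The plan is to apply Theorem \ref{thm:topological_argument_integral_case}. It suffices to take $X:=S\times E^{d-2}$, where $E$ is an elliptic curve over $\Rb$ with $E(\Rb)$ disconnected and $S$ is a real Enriques surface satisfying two conditions: $S(\Rb)$ has a connected component of odd Euler characteristic, and exactly one half of $S$ is nonempty. Then neither condition (i) nor condition (ii) of Theorem \ref{thm:topological_argument_integral_case} holds, so $\ev^d_X$ is not surjective. Moreover $X$ is smooth, projective and of dimension $d$. The required $E$ is standard; for instance $y^2=x(x-1)(x+1)$ has real locus with two components.

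The only real input is therefore the existence of such an Enriques surface $S$. I will take it from the topological classification of real Enriques surfaces (Degtyarev--Itenberg--Kharlamov), which lists the possible topological types of the two halves. There exist real Enriques surfaces with one empty half whose real locus contains a component of odd Euler characteristic; for example, the real locus can have a component homeomorphic to $\Rb\Pb^2$ (Euler characteristic $1$) or to a nonorientable surface of odd genus, while the other half is empty. Concretely, I would cite an explicit entry of the classification in \cite{degtyarevHalvesRealEnriques1996} in which one half is $\varnothing$ and the other half contains a component with $\chi$ odd, such as a half equal to $\Rb\Pb^2\sqcup\dots$.

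Alternatively, I could construct $S$ as a quotient of a real K3 surface $S'$ by a fixed-point-free involution commuting with conjugation, chosen so that one lift of the real structure has empty real locus. Examples can be obtained from intersections of three diagonal quadrics in $\Pb^5$, with the involution given by sign changes on three coordinates.

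The main obstacle is verifying the existence of such an $S$ with both required properties. I would first try to quote the classification directly; failing that, I would use the explicit quadric construction and compute the real loci of the two twisted K3 covers by hand.
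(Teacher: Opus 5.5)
Your proposal is correct and is exactly the paper's argument: take $X=S\times E^{d-2}$, where $S$ is a real Enriques surface with exactly one nonempty half and a real component of odd Euler characteristic, so that both conditions of Theorem~\ref{thm:topological_argument_integral_case} fail. The paper also just cites the Degtyarev--Itenberg--Kharlamov classification for the existence of such an $S$; the precise reference is the tables in Appendix~C of \cite{degtyarevRealEnriquesSurfaces2000} rather than \cite{degtyarevHalvesRealEnriques1996}, and your alternative explicit construction via three quadrics is not needed.
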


\begin{proof}
This follows from Theorem \ref{thm:topological_argument_integral_case} as there exist real Enriques surfaces with the required property (see \cite[Appendix C, pp.\,228--230]{degtyarevRealEnriquesSurfaces2000}).
\end{proof}

We record the case of Enriques surfaces, a result in the spirit of the work of Mangolte and van Hamel on the topology of real Enriques surfaces \cite{MangolteVanHamel1998}.

\begin{cor}
\label{corEnriques}
Let $S$ be an Enriques surface over $\Rb$. Then $\ev^2_S$ is surjective if and only if at least one of the following conditions holds:
\begin{enumerate}[label=(\roman*)] 
\item all connected components of $S(\Rb)$ have even Euler characteristic;
\item both halves of $S$ are nonempty.
\end{enumerate}
\end{cor}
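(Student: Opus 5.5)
This is the case $d=2$ of Theorem~\ref{thm:topological_argument_integral_case}: for $d=2$ the product $X=S\times E^{d-2}$ is just $S$. The elliptic curve $E$ plays no role, since no factor $E$ occurs. So the plan is to run the two steps of that proof with $d=2$ and check that nothing degenerates.

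\emph{Non-surjectivity.} Suppose $S(\Rb)$ has a component $\Pi_S$ of odd Euler characteristic and only one half of $S$ is nonempty. Let $\alpha\in\Hr^1_G(S(\Cb),\Zb/2)$ be the class of the K3 double cover $S'\to S$ that is surjective on real points, so that $r^1_S(\alpha)=(0,w_{1,S})$. Let $\sigma$ be the indicator of $\Pi_S$.
\begin{enumerate}[label=(\alph*)]
\item Suppose $\ev^2_S(\varepsilon)=\sigma$. Then its reduction $\ovarepsilon$ satisfies $[\ovarepsilon]_0=\sigma$ and $[\ovarepsilon]_1=0$, by \cite[(1.33)]{benoistIntegralHodgeConjecture2020}.
\item Theorem~\ref{thm:image_of_total_restriction_to_the_real_locus}, with $j=2$ and $2d-j-1=1$, gives $\deg(r^2_S(\ovarepsilon)\smile r^1_S(\alpha)\smile w_S^{-1})=0$.
\item Expanding this degree gives $\deg_{\Pi_S}(w_1(\Pi_S)^2)=\chi(\Pi_S)$ mod $2$, which is a contradiction.
\end{enumerate}
No choice of $\beta$ on $E$ is needed, because the product over $E$-factors is empty.

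\emph{Surjectivity.} Conversely, assume (i) or (ii). Step~2 of that proof already establishes directly that $\ev^2_S$ is surjective. It does so by combining three inputs:
\begin{itemize}
\item \cite[Proposition~8.1]{benoistPeriodindexProblemReal2019}, which gives a lift $\beta$ with $[\beta]_1=0$;
\item the diagram \eqref{commexact2}, used to lift the resulting class to $\Hr^2_G(S(\Cb),\Zb(2))$;
\item the vanishing argument based on \eqref{compositions}.
\end{itemize}

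The only point requiring care is that the statement of Theorem~\ref{thm:topological_argument_integral_case} includes the hypothesis that $E(\Rb)$ is disconnected. For $d=2$ that hypothesis is vacuous, but formally one can simply fix any such $E$ (for instance $y^2=x^3-x$) and apply the theorem with $d=2$. So I expect no real obstacle: the corollary follows immediately by specializing Theorem~\ref{thm:topological_argument_integral_case} to $d=2$.
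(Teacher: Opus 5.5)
Your proposal is correct and matches the paper: Corollary~\ref{corEnriques} is recorded there as the case $d=2$ of Theorem~\ref{thm:topological_argument_integral_case}, where $X=S\times E^{0}=S$ and any $E$ with $E(\Rb)$ disconnected (such as $y^2=x^3-x$) can be fixed to satisfy the formal hypothesis. Your unwinding of both steps for $d=2$ is accurate, including the orthogonality of $V^2_S$ and $V^1_S$ and the resulting identity $\deg_{\Pi_S}(w_1(\Pi_S)^2)=\chi(\Pi_S)$ mod $2$.
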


\subsection{Abelian varieties}
\label{partopova}

The following result is used in the proof of Proposition~\ref{propva2}.

\begin{prop}
\label{propva}
Let $X$ be an abelian variety of dimension $d\geq 1$ over $\Rb$ such that~$X(\Rb)$ has $2^d$ connected components. Then $\oev^{d-1}_X$ is not surjective.
\end{prop}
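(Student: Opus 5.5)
The plan is to imitate Step~\ref{Step1} of the proof of Theorem~\ref{thm:topological_argument_integral_case} and to use the duality of Theorem~\ref{thm:image_of_total_restriction_to_the_real_locus} with $j=d-1$. Since the tangent bundle of $X$ is trivial, $w_X=1$, and $V^{d-1}_X$ is exactly the orthogonal complement of $V^d_X$ for the pairing $(x,y)\mapsto\deg(x\smile y)$. Fix a connected component $\Pi$ of $X(\Rb)$ and let $\sigma\in\Hr^0(X(\Rb),\Zb/2)$ be its indicator function. Suppose some $\varepsilon\in\Hr^{d-1}_G(X(\Cb),\Zb/2)$ satisfies $\oev^{d-1}_X(\varepsilon)=\sigma$, so that $[\varepsilon]_0=\sigma$. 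It then suffices to exhibit $\gamma\in\Hr^d_G(X(\Cb),\Zb/2)$ with $r^d_X(\gamma)=(0,\dots,0,v)$, where $v\in\Hr^d(X(\Rb),\Zb/2)$ has degree~$1$ on $\Pi$. Indeed, $\deg(r^{d-1}_X(\varepsilon)\smile r^d_X(\gamma))=\deg(\sigma v)=1$ would then contradict the orthogonality.

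To construct $\gamma$, recall that $X(\Rb)$ is a disjoint union of $2^d$ copies of the torus $(S^1)^d$. I would produce classes $\alpha_1,\dots,\alpha_d\in\Hr^1_G(X(\Cb),\Zb/2)=\Hr^1_{\et}(X,\Zb/2)$ with
\[
r^1_X(\alpha_i)=(0,a_i),
\]
where the restrictions of $a_1,\dots,a_d$ to $\Pi$ form a basis of $\Hr^1(\Pi,\Zb/2)$ dual to the coordinate circles. Setting $\gamma:=\alpha_1\smile\dots\smile\alpha_d$, the product formula \cite[(1.28)]{benoistIntegralHodgeConjecture2020} gives $r^d_X(\gamma)=(0,\dots,0,a_1\smile\dots\smile a_d)$, and $a_1\smile\dots\smile a_d$ restricts to the fundamental class of $\Pi$, as required. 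As a consistency check, the total degree of this class is even (it equals $2^d$ if all the $a_i$ have this form on every component), which is compatible with $1\in V^{d-1}_X$.

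The classes $\alpha_i$ will be étale double covers of $X$, that is, $\mu_2$-torsors, given by real $2$-torsion line bundles $L$ together with trivializations $L^{\otimes 2}\simeq\Oc_X$. In the description of $\Hr^1_{\et}(X,\Zb/2)$ through $X^\vee[2]$ and the real points, two quantities are attached to such a torsor:
\begin{enumerate}[label=(\alph*)]
\item $[\alpha]_0$, the sign of the chosen trivialization at real points, which measures whether the fibers over real points are split;
\item $[\alpha]_1$, the Borel--Haefliger class of $L$ (as recalled in Step~\ref{Step1}, via \cite[Lemma 2.1]{benoistPeriodindexProblemReal2019}).
\end{enumerate}
Maximality of $X(\Rb)$ (it has $2^d$ components) should mean that $X^\vee(\Rb)[2]$ has rank $2d$. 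It should also mean that the Borel--Haefliger map from real $2$-torsion line bundles to $\Hr^1(X(\Rb),\Zb/2)$ hits the classes that are, on every component, a prescribed coordinate class. Concretely, I would use the real uniformization $X(\Cb)=\Cb^d/\Lambda$ in the maximal case, which gives a decomposition of the real structure on $\Lambda$ allowing one to write down the relevant characters explicitly.

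The main obstacle is arranging $[\alpha_i]_0=0$, i.e.\ that the double cover splits over every real point. The sign of the trivialization may vary from component to component, and one can only rescale it by a global real constant. I would first try to handle this by choosing $\alpha_i$ as the pullback of the multiplication-by-$2$ isogeny $X\to X$ along a suitable real character: the fiber over $x\in X(\Rb)$ splits if and only if $x$ lies in the image of $X'(\Rb)$. If $[\alpha_i]_0$ cannot be killed on all components at once, the fallback is to modify the argument so that it only needs $[\alpha_i]_0$ to vanish on $\Pi$, or to be constant. In that case the extra lower-degree terms of $r^d_X(\gamma)$ must be computed via \cite[(1.28)]{benoistIntegralHodgeConjecture2020}, and one must check that they pair trivially with $r^{d-1}_X(\varepsilon)$.
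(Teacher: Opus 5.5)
Your skeleton is the paper's. With $w_X=1$, Theorem~\ref{thm:image_of_total_restriction_to_the_real_locus} for $j=d-1$ makes $V^{d-1}_X$ and $V^d_X$ orthogonal for $(x,y)\mapsto\deg(x\smile y)$. A class $\gamma\in\Hr^d_G(X(\Cb),\Zb/2)$ with $r^d_X(\gamma)=(0,\dots,0,v)$ then pairs with $r^{d-1}_X(\varepsilon)$ only through $[\varepsilon]_0$. The paper takes $v=[X(\Rb)]$, built as a product of pulled-back degree-one classes via \cite[(1.28)]{benoistIntegralHodgeConjecture2020}, and concludes that $\oev^{d-1}_X(\varepsilon)$ is nonzero on an even number of components.

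However, the only nontrivial input beyond the duality theorem is left unresolved in your proposal: the existence of $\alpha_i\in\Hr^1_G(X(\Cb),\Zb/2)$ with $[\alpha_i]_0=0$ and $[\alpha_i]_1$ restricting to coordinate classes. Your criterion (the fibre over $x$ splits if and only if $x$ lies in the image of $X'(\Rb)$) is a tautology and does not tell you which character to use. The fallback also fails. If $[\alpha_i]_0$ is nonzero on some component, then $r^d_X(\gamma)$ acquires terms of degree $<d$ there. These pair with $[\varepsilon]_1,\dots,[\varepsilon]_{d-1}$, over which you have no control. Only a \emph{constant} $[\alpha_i]_0$ is harmless, since you can subtract the pullback of the generator of $\Hr^1(G,\Zb/2)$, whose image under $r^1_X$ is $(1,0)$.

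\emph{The paper's route.} By \cite{silholRealAbelianVarieties1982}, $\Lambda=\Hr_1(X(\Cb),\Zb)\simeq(\Zb\oplus\Zb(1))^{\oplus d}$ when $X(\Rb)$ has $2^d$ components, and $X(\Cb)$ is $G$-equivariantly diffeomorphic to $(\Lambda\otimes\Rb)/\Lambda$. The statement is topological, so you may take $X=E^d$. On $E$, Theorem~\ref{thm:image_of_total_restriction_to_the_real_locus} with $d=1$ and $j=0$ says that $V^1_E$ is the orthogonal of $V^0_E=\{0,1\}\subset\Hr^0(E(\Rb),\Zb/2)$. Since $[E(\Rb)]$ has degree $2$, this gives $\alpha$ with $r^1_E(\alpha)=(0,[E(\Rb)])$, and you set $\alpha_i:=p_i^*\alpha$.

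\emph{Your direct route, repaired.} It works once you choose the characters correctly. Lift complex conjugation to the standard conjugation of $\Cb^d$ fixing $0$, and write $\Lambda=\Lambda^+\oplus\Lambda^-$ as a sum of eigenlattices (this holds in the maximal case). If $\tilde x\in\Cb^d$ lifts a real point, then $\bar{\tilde x}-\tilde x\in\Lambda\cap i\Rb^d=\Lambda^-$. So take a character $\chi:\Lambda\to\Zb/2$ with $\chi|_{\Lambda^-}=0$; its kernel is $G$-stable. The $G$-equivariant double cover $\Cb^d/\ker\chi\to X(\Cb)$ then has only real points above $X(\Rb)$, so $[\alpha]_0=0$. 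Over each real component $(\Rb^d+iv)/\Lambda^+$ it is the double cover of the torus given by $\chi|_{\Lambda^+}$, which determines $[\alpha]_1$. Characters dual to a basis of $\Lambda^+$ give your $a_i$, with $a_1\smile\dots\smile a_d=[X(\Rb)]$. For $\chi$ not vanishing on $\Lambda^-$ you instead get $[\alpha]_0(x)=\chi(\bar{\tilde x}-\tilde x)$, which is nonconstant; this is exactly the phenomenon you were worried about.
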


\begin{proof}
The $G$-module $\Lambda:=\Hr_1(X(\Cb),\Zb)$ is isomorphic to $(\Zb\oplus\Zb(1))^{\oplus a}\oplus \Zb[G]^{\oplus (d-a)}$ for some $a\geq 0$, and $X(\Cb)$ is $G$-equivariantly diffeomorphic to $(\Lambda\otimes_{\Zb}\Rb)/\Lambda$ 
(see e.g.\,\cite[\S\S4--5]{silholRealAbelianVarieties1982}). It follows that  the $G$-equivariant diffeomorphism class of $X(\Cb)$ only depends on the number of connected components of $X(\Rb)$, so we may assume that $X=E^d$ for some elliptic curve $E$ over $\Rb$ such that~$E(\Rb)$ has $2$ connected components. Let $(p_i:X\to E)_{1\leq i\leq d}$ be the projections.

Applying Theorem \ref{thm:image_of_total_restriction_to_the_real_locus} to $E$ (noting that $w_E=1$ because the tangent bundle of~$E$ is trivial) shows the existence of $\alpha\in \Hr^1_G(E(\Cb),\Zb/2)$ with $r_E^1(\alpha)=(0,[E(\Rb)])$, where~$[E(\Rb)]\in \Hr^1(E(\Rb),\Zb/2)$ is the fundamental class of $E(\Rb)$. Define
$$\beta:=p_1^*\alpha\smile\dots\smile p_d^*\alpha\quad\text{in }\Hr^d_G(X(\Cb),\Zb/2).$$
It follows from \cite[(1.28)]{benoistIntegralHodgeConjecture2020} that $r^d_X(\beta)=(0,\dots, 0,[X(\Rb)])$. We deduce from Theorem \ref{thm:image_of_total_restriction_to_the_real_locus} applied to $X$ (noting that $w_X=1$ because the tangent bundle of~$X$ is trivial) that for all $\gamma\in \Hr^{d-1}_G(X(\Cb),\Zb/2)$, one has ${\deg(r_X^{d-1}(\gamma)\smile r_X^d(\beta))=0}$. This exactly means that the number of connected components of $X(\Rb)$ on which~$\oev_X^{d-1}(\gamma)$ is nonzero is even, which proves the proposition.
\end{proof}

\section{Unramified cohomology}

This section is devoted to the study of evaluation maps for unramified cohomology.

Let $X$ be a smooth algebraic variety over $\Rb$. Let~$M$ be a bounded complex of $G$\nobreakdash-modules. Fix~$j\in\Zb$. We define $\ev^j_{\nr,X,M}:\Hr^j_{\nr}(X,M)\to \Hr^0(X(\Rb),\Hr^j(G,M))$ to be induced by $\ev^j_{X,M}$. The particular cases $\oev^j_{\nr,X}:=\ev^j_{\nr,X,\Zb/2}$ (for~$j\geq 0$) and~$\ev^j_{\nr,X}:=\ev^j_{\nr,X,\Zb(j)}$ (for~$j\geq 1$) have already been introduced in~\eqref{unreval}.

\subsection{Surjective evaluation maps}
\label{parsurjectiveevnr}

Recall that a cohomology class is said to have \emph{coniveau} $\geq c$ if it vanishes when restricted to the complement of some closed algebraic subset of codimension~$\geq c$ and that a cohomology group is said to have \emph{coniveau}~$\geq c$ if all its elements have coniveau~$\geq c$.

\subsubsection{A mechanism to construct unramified cohomology classes}

\begin{lem}
\label{thtriangle}
Let $X$ be a smooth variety over $\Rb$. Let $M\xrightarrow{a} N\xrightarrow{b} P\xrightarrow{c}$ be a distinguished triangle in the bounded derived category of $G$-modules and fix $j\in\Zb$.  
The image of the composition
$$\Hr^{j-1}_{\nr}(X,P)\xrightarrow{\ev^{j-1}_{\nr,X,P}}\Hr^0(X(\Rb),\Hr^{j-1}(G,P))\xrightarrow{c_*} \Hr^0(X(\Rb),\Hr^j(G,M))$$
contains the image by $\ev^j_{X,M}$ of $\{\alpha\in \Hr^j_G(X(\Cb),M)\mid a_*\alpha\textrm{ has coniveau }\geq 2\}$.
\end{lem}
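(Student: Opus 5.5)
Starting from $\alpha\in\Hr^j_G(X(\Cb),M)$ with $a_*\alpha$ of coniveau $\geq 2$, we build an unramified class in $\Hr^{j-1}_{\nr}(X,P)$ whose evaluation, pushed forward by $c_*$, equals $\ev^j_{X,M}(\alpha)$. By definition there is a closed algebraic subset $Z\subset X$ of codimension $\geq 2$ such that $a_*\alpha|_{U}=0$, where $U:=X\setminus Z$. The long exact sequence of $G$-equivariant cohomology of $U(\Cb)$ attached to the triangle reads
$$\Hr^{j-1}_G(U(\Cb),P)\xrightarrow{c_*}\Hr^j_G(U(\Cb),M)\xrightarrow{a_*}\Hr^j_G(U(\Cb),N).$$
Since $a_*(\alpha|_U)=0$, there is $\beta\in\Hr^{j-1}_G(U(\Cb),P)$ with $c_*\beta=\alpha|_U$.

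Next we take the image $\beta_{\nr}$ of $\beta$ in $\Hr^0(U,\Hsc^{j-1}_U(P))$. The key point is that $\Hr^0(U,\Hsc^{j-1}_U(P))=\Hr^0(X,\Hsc^{j-1}_X(P))=\Hr^{j-1}_{\nr}(X,P)$ because $Z$ has codimension $\geq 2$. This follows from the Bloch--Ogus/Gersten resolution of $\Hsc^{j-1}_X(P)$, valid for equivariant cohomology with coefficients in a bounded complex of $G$-modules as in \cite[\S5.1]{benoistIntegralHodgeConjecture2020}: unramified classes are determined by purity conditions at codimension $1$ points only. This is the step needing most care: one has to make sure the Gersten-type exactness holds for the relevant coefficient complexes. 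If necessary, one can reduce to $G$-modules by dévissage, or invoke the purity statements of Colliot-Thélène--Parimala and van Hamel.

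Finally we compare evaluations. For $x\in X(\Rb)$ we need $c_*(\beta_{\nr}|_x)=\alpha|_x$. For $x\in U(\Rb)$ this follows from naturality of $c_*$ under pullback to the point $x$. Indeed, evaluation of an unramified class at a real point of $U$ coincides with evaluation of any local lift, and $\beta$ is such a lift on all of $U$, so
$$c_*(\ev^{j-1}_{\nr,X,P}(\beta_{\nr})(x))=c_*(\beta|_x)=(c_*\beta)|_x=\alpha|_x.$$
Since $Z$ has codimension $\geq 2$, hence codimension $\geq 2$ in the $\ci$ manifold $X(\Rb)$ (real dimension considerations), every connected component of $X(\Rb)$ meets $U(\Rb)$ whenever it is nonempty; if $Z(\Rb)$ were to contain a component, that component would have dimension $<d$, which is impossible. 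Both sides are locally constant functions on $X(\Rb)$ that agree on the dense open set $U(\Rb)$, so they agree everywhere. Here one also uses that $\ev^{j-1}_{\nr,X,P}$ gives a locally constant function on all of $X(\Rb)$, which holds by its construction via local lifts. Hence $\ev^j_{X,M}(\alpha)=c_*\circ\ev^{j-1}_{\nr,X,P}(\beta_{\nr})$, as required.
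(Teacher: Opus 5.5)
Your proof is correct and follows the paper's argument. You lift $\alpha|_U$ to $\beta\in\Hr^{j-1}_G(U(\Cb),P)$ via the long exact sequence, then use Bloch--Ogus theory (which, as the paper also notes, works for bounded complexes of $G$-modules) to see that $\beta$ defines a class in $\Hr^{j-1}_{\nr}(X,P)$ because $Z$ has codimension $\geq 2$. The only cosmetic difference is at the end: the paper observes that $c_*\gamma$ is the unramified class of $\alpha$ in $\Hr^j_{\nr}(X,M)$, whereas you compare evaluations on $U(\Rb)$ and extend by local constancy, which is equally valid since every component of $X(\Rb)$ meets $U(\Rb)$.
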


\begin{proof}
Let $\alpha\in\Hr^j_G(X(\Cb),M)$ be a class such that $a_*\alpha$ has coniveau $\geq 2$. Let~${Z\subset X}$ be a closed subset of codimension $\geq 2$, with complement $U:=X\setminus Z$, such that~$(a_*\alpha)|_{U}=0$ in~$\Hr^j_G(U(\Cb),N)$. The long exact sequence associated with the triangle yields the existence of $\beta\in \Hr^{j-1}_G(U(\Cb),P)$ such that $c_*\beta=\alpha|_{U}$.

Bloch--Ogus theory identifies the unramified cohomology group $\Hr^{j-1}_{\nr}(X,P)$ with the subgroup of $\colim_V \Hr^{j-1}_G(V(\Cb),P)$ (where $V$ runs over all dense open subsets of $X$) consisting of those classes whose residues along all codimension $1$ integral subvarieties of $X$ vanish (see \cite[(5.1) and (5.2)]{benoistIntegralHodgeConjecture2020}, and note that the arguments given there apply to bounded complexes of $G$-modules as well as to $G$-modules). 

As $Z$ has codimension $\geq 2$, it follows that $\beta$ induces an unramified cohomology class $\gamma\in \Hr^{j-1}_{\nr}(X,P)$. By construction, the class $c_*\gamma\in \Hr^j_{\nr}(X,M)$ is the unramified cohomology class associated with $\alpha$. It follows that $c_*\ev^{j-1}_{\nr,X,P}(\gamma)=\ev^j_{X,M}(\alpha)$.
\end{proof}

\subsubsection{A few distinguished triangles}

We now construct interesting complexes of $G$-modules that will be used to apply Lemma \ref{thtriangle}.
Fix $d\in\Zb$. Consider a
diagram 
\begin{equation}
\label{diagtriangle}
\begin{tikzcd}[row sep=small, column sep=small]
	{\Zb[G]} & {\Zb\oplus\Zb(1)} & {\Zb/2} & {} \\
	{\Zb[G]} & {\Zb[G]} & {K_d} & {}\\
	{} & {L_d} & {L_d} & \\
	{} & {} & {}
	\arrow[from=1-1, to=1-2]
	\arrow[equals, from=1-1, to=2-1]
	\arrow[from=1-2, to=1-3]
	\arrow["\varphi"', from=1-2, to=2-2]
	\arrow[from=1-3, to=1-4]
	\arrow[from=1-3, to=2-3]
	\arrow["\psi"', from=2-1, to=2-2]
	\arrow[from=2-2, to=2-3]
	\arrow[from=2-3, to=2-4]
	\arrow[from=2-2, to=3-2]
	\arrow[from=2-3, to=3-3]
	\arrow[equals, from=3-2, to=3-3]
	\arrow[from=3-2, to=4-2]
	\arrow[from=3-3, to=4-3]
\end{tikzcd}
\end{equation}
of distinguished triangles in the bounded derived category of $G$-modules, whose upper triangle is induced by the short exact sequence $0\to\Zb[G]\to\Zb\oplus\Zb(1)\to\Zb/2\to 0$, where the morphisms $\varphi$ and $\psi$ have saturated images isomorphic to $\Zb(d)$, the complexes~$K_d$ and $L_d$ are defined by the fact that the second row and the second column are distinguished, the morphism $\Zb/2\to K_d$ exists by axiom (TR3) of triangulated categories, and the rest of the diagram can be completed thanks to the octahedral axiom of triangulated categories.

\begin{lem}
\label{lemLd}
For all $d\in\Zb$, one has $L_d\simeq \Zb(d-1)\oplus\Zb(d-1)[1]$.
\end{lem}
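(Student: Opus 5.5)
The plan is to compute $L_d$ directly from the second column of \eqref{diagtriangle}: it is the cone of $\varphi:\Zb\oplus\Zb(1)\to\Zb[G]$, a morphism of $G$-modules concentrated in degree $0$. I will analyse $\varphi$ summand by summand. Write $\Zb\oplus\Zb(1)=\Zb(d)\oplus\Zb(d-1)$, using that $\Zb(k)$ only depends on the parity of $k$ and that $\{d,d-1\}$ are representatives of both parities.

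First, $\varphi$ kills the summand $\Zb(d-1)$. Its image lies in $\im(\varphi)\simeq\Zb(d)$, and $\mathrm{Hom}_G(\Zb(d-1),\Zb(d))=0$: a $G$-equivariant map $\Zb\to\Zb$ intertwining the actions $(-1)^{d-1}$ and $(-1)^d$ must vanish. Hence $\varphi$ factors through the projection onto the summand $\Zb(d)$.

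Second, since $\im(\varphi)$ has rank $1$, the restriction $\varphi|_{\Zb(d)}$ is injective. Its image is the whole of $\im(\varphi)$, which by hypothesis is a saturated sublattice of $\Zb[G]$ isomorphic to $\Zb(d)$. The quotient $\Zb[G]/\im(\varphi)$ is then torsion-free of rank $1$. Complex conjugation acts on it by $(-1)^{d+1}$, because $\Zb[G]\otimes\Qb\simeq\Qb\oplus\Qb(1)$ and the quotient of $\Zb[G]$ by a saturated $\Zb(d)$ is $\Zb(d+1)$. This is the sequence $0\to\Zb(d)\to\Zb[G]\to\Zb(d+1)\to0$, and $\Zb(d+1)=\Zb(d-1)$.

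The conclusion is then formal. $\varphi$ is the direct sum of the zero map $\Zb(d-1)\to0$ and the injection $\Zb(d)\hookrightarrow\Zb[G]$. Cones are additive, so
$$L_d\simeq \mathrm{cone}(\Zb(d-1)\to 0)\oplus\mathrm{cone}(\Zb(d)\hookrightarrow\Zb[G])\simeq \Zb(d-1)[1]\oplus\Zb(d-1),$$
where the second cone of an injective map is quasi-isomorphic to its cokernel.

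There is no real obstacle; the only point needing care is the bookkeeping of parities. One must confirm that the saturated-image hypothesis forces the cokernel to be exactly $\Zb(d-1)$ rather than having torsion, which is exactly what saturation gives. One should also note that the answer does not depend on the choice of $\psi$ or on the octahedral completion, since $L_d$ is determined as the cone of $\varphi$ alone.
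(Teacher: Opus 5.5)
Your proof is correct, and it takes a different and more direct route than the paper.

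The paper also reads off from the second column that $L_d$ has cohomology $\Zb(d-1)$ in degrees $-1$ and $0$. It then treats the triangle $\Zb(d-1)[1]\to L_d\to\Zb(d-1)\to$ as a possibly nonsplit extension, whose class lies in $\Hr^2(G,\Zb)=\Zb/2$. It rules out the nontrivial class by contradiction:
\begin{itemize}
\item A nontrivial class would make the connecting maps $\Hr^j(G,\Zb(d-1))\to\Hr^{j+2}(G,\Zb(d-1))$ isomorphisms for $j\geq 1$, forcing $\Hr^j(G,L_d)=0$ for $j\geq 2$.
\item On the other hand, the second row gives $\Hr^j(G,K_d)=0$ for $j\geq 1$, since $K_d$ is the cone of a map between induced modules.
\item The third column then gives $\Hr^j(G,L_d)\simeq\Hr^{j+1}(G,\Zb/2)\neq 0$ for $j\geq 1$, a contradiction.
\end{itemize}

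You instead observe that $\mathrm{Hom}_G(\Zb(d-1),\Zb(d))=0$ forces $\ker\varphi=\Zb(d-1)$ to be a direct summand of the source. So $\varphi$ is literally the direct sum of a zero map and an injection, and its cone splits outright. Equivalently, the Yoneda $2$-extension $0\to\ker\varphi\to\Zb\oplus\Zb(1)\to\Zb[G]\to\mathrm{coker}\,\varphi\to 0$ has trivial class because its first short exact factor splits.

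What each approach buys:
\begin{itemize}
\item Your argument uses only the second column. It does not need $\psi$, $K_d$ or the octahedral completion, and it produces an explicit splitting.
\item The paper's argument never looks inside $\varphi$. It detects the extension class through the periodicity of $G$-cohomology and the surrounding triangles. That kind of argument still works when the kernel is not visibly a direct summand.
\end{itemize}
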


\begin{proof}
The second column of \eqref{diagtriangle} shows that the nonzero cohomology $G$-modules of~$L_d$ are $\Zb(d-1)$ in degrees $-1$ and $0$, so one has a distinguished triangle 
\begin{equation}
\label{sesL}
\Zb(d-1)[1]\to L_d\to\Zb(d-1)\to.
\end{equation}
The class of this extension is given by a morphism $\Zb(d-1)\to\Zb(d-1)[2]$, which corresponds to a cohomology class $\alpha\in\Hr^2(G,\Zb)=\Zb/2$. The claim is then that this class is trivial. Suppose on the contrary that $\alpha$ is the nontrivial element of $\Hr^2(G,\Zb)$. As the connecting homomorphisms \[\Hr^j(G,\Zb(d-1))\to\Hr^{j+2}(G,\Zb(d-1))\] of the long exact sequence induced by \eqref{sesL} are given by the cup product with $\alpha$, they are isomorphisms for all $j\geq 1$. It follows that~$\Hr^j(G,L_d)$ vanishes for $j\geq 2$. 

However, the second row of \eqref{diagtriangle} shows that $\Hr^j(G,K_d)$ vanishes for $j\geq 1$, and the third column of \eqref{diagtriangle} therefore implies that $\Hr^j(G,L_d)$ is nonzero for $j\geq 1$. This is the required contradiction. 
\end{proof}

Let $J$ be the $2$-torsion complex of $G$-modules defined by the distinguished triangle 
\begin{equation}
\label{dist1}
J\to\Zb/2[G]\to\Zb/2[G]\to
\end{equation}
where the morphism $\Zb/2[G]\to\Zb/2[G]$ has image $\Zb/2$. The nonzero cohomology $G$-modules of $J$ are $\Zb/2$ in degrees $0$ and $1$, so one has a distinguished triangle 
\begin{equation}
\label{triangleJ}
\Zb/2\to J\to\Zb/2[-1]\to.
\end{equation}

\subsubsection{Special cases of Lemma \ref{thtriangle}}

\begin{thm}
\label{cortriangle1}
Let $X$ be a smooth variety of dimension $d\geq 1$ over~$\Rb$.
\begin{enumerate}[label=(\roman*)] 
\item 
\label{cori}
If the elements of $\Hr^{d+1}(X(\Cb),\Zb)$ that are in the kernel of both norm maps 
$\Hr^{d+1}(X(\Cb),\Zb)\to \Hr^{d+1}_G(X(\Cb),\Zb)$ and $\Hr^{d+1}(X(\Cb),\Zb)\to \Hr^{d+1}_G(X(\Cb),\Zb(1))$ have coniveau $\geq 2$, then $\ev^d_{\nr,X}$ is surjective.
\item 
\label{corii}
If all classes in the image of $\Hr^{d}_G(X(\Cb),\Zb/2)\to \Hr^{d}(X(\Cb),\Zb/2)$ have co\-ni\-veau~$\geq 2$, then $\oev^{d-1}_{\nr,X}$ is surjective.
\item 
\label{coriii}
For $d\geq 2$, if all classes in the image of the morphism $$\Hr^d_G(X(\Cb),\Zb/2)\to \Hr^d_G(X(\Cb),K_d)$$ induced by \eqref{diagtriangle} have coniveau $\geq 2$, then $\ev^{d-1}_{\nr,X}$ is surjective.
\item 
\label{coriv}
For $d\geq 2$, if all classes in the image of the morphism $$\Hr^d_G(X(\Cb),\Zb/2)\to \Hr^d_G(X(\Cb),J)$$ induced by \eqref{triangleJ} have coniveau~$\geq 2$, then $\oev^{d-2}_{\nr,X}$ is surjective.
\end{enumerate}
\end{thm}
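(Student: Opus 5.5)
Each of the four parts is an application of Lemma \ref{thtriangle} to a suitable distinguished triangle $M\xrightarrow{a}N\xrightarrow{b}P\xrightarrow{c}$. Fix $\sigma\in\Hr^0(X(\Rb),\Zb/2)$. By Theorem \ref{thKvH} we choose $\alpha\in\Hr^d_G(X(\Cb),\Zb/2)$ with $\oev^d_X(\alpha)=\sigma$; we will pick $M$ with a natural map $\Zb/2\to M$ and push $\alpha$ into $\Hr^d_G(X(\Cb),M)$. The hypothesis of each part should say exactly that $a_*\alpha$ has coniveau $\geq 2$. Lemma \ref{thtriangle} then shows that $\ev^d_{X,M}(\alpha)$ lies in the image of $c_*\circ\ev^{d-1}_{\nr,X,P}$. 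The remaining work is to identify $P$, and to check that $c_*:\Hr^{d-1}(G,P)\to\Hr^d(G,M)$ is compatible with the relevant isomorphisms, so that the unramified classes we get have the prescribed values.

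For \ref{cori}, take the triangle $\Zb(d+1)\to\Zb[G]\to\Zb(d)\to$ with $M=\Zb(d+1)$, $P=\Zb(d)$. The connecting map $\Hr^{d}(G,\Zb(d))\to\Hr^{d+1}(G,\Zb(d+1))$ is cup product with the generator of $\Hr^1(G,\Zb(1))$, hence an isomorphism of groups of order $2$. Here $a_*$ lands in $\Hr^{d+1}_G(X(\Cb),\Zb[G])=\Hr^{d+1}(X(\Cb),\Zb)$ by \eqref{Shapiro}. We lift $\sigma$ to $\alpha'\in\Hr^{d+1}_G(X(\Cb),\Zb(d+1))$ as the Bockstein image of $\alpha$. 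Its image in $\Hr^{d+1}(X(\Cb),\Zb)$ is then the Bockstein of the restriction of $\alpha$, so it is $2$-torsion. Moreover, composing with the norm maps to $\Hr^{d+1}_G(X(\Cb),\Zb)$ and $\Hr^{d+1}_G(X(\Cb),\Zb(1))$ factors through the sequence \eqref{sesGdd+1} used in Proposition \ref{propno2torsion}, so it gives zero. This part requires some care: the Bockstein must be taken with the right twist, and the norm maps must be composed as in \eqref{sesGdd+1}. Once that is done, the hypothesis gives coniveau $\geq 2$, and the lemma produces a class in $\Hr^d_{\nr}(X,\Zb(d))$ with value $\sigma$.

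For \ref{corii}, use the triangle $\Zb/2\to\Zb/2[G]\to\Zb/2\to$. Its connecting map $\Hr^{d-1}(G,\Zb/2)\to\Hr^d(G,\Zb/2)$ is an isomorphism, and $a_*\alpha$ is the restriction of $\alpha$ to $\Hr^d(X(\Cb),\Zb/2)$, which has coniveau $\geq2$ by assumption. For \ref{coriii}, use the second row of \eqref{diagtriangle} rotated so that $M$ receives $\Zb/2$. Concretely, with the rotated triangle $\Zb(d)\to\Zb[G]\to\cdots$, take $P=\Zb(d-1)$ coming from $L_d$ via Lemma \ref{lemLd}, and apply the lemma with $M=\Zb/2$ mapping to $K_d$. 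We then have to check that $c_*$ identifies $\Hr^{d-1}(G,\Zb(d-1))$ with $\Hr^{d}(G,\Zb/2)$. The cleanest route is probably to take the triangle $\Zb/2\to K_d\to L_d\to$ (third column of \eqref{diagtriangle}) and use $L_d\simeq\Zb(d-1)\oplus\Zb(d-1)[1]$. Then $\Hr^{d-1}_{\nr}(X,L_d)$ contains $\Hr^{d-1}_{\nr}(X,\Zb(d-1))$ as a summand, and this summand maps isomorphically onto $\Hr^d(G,\Zb/2)$ on points. That the summand is the right one follows from the vanishing $\Hr^j(G,K_d)=0$ for $j\geq1$ established in the proof of Lemma \ref{lemLd}: it forces $\Hr^{d-1}(G,L_d)\to\Hr^d(G,\Zb/2)$ to be surjective, and we still have to determine which summand carries the surjection. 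Part \ref{coriv} is analogous: rotate \eqref{triangleJ} to $\Zb/2\to J\to\Zb/2[-1]\to$, so that $P=\Zb/2[-1]$ and $\Hr^{d-1}_{\nr}(X,\Zb/2[-1])=\Hr^{d-2}_{\nr}(X,\Zb/2)$. We must also check that $c_*:\Hr^{d-2}(G,\Zb/2)\to\Hr^d(G,\Zb/2)$ is an isomorphism. This follows because \eqref{dist1} gives $\Hr^j(G,J)=0$ for $j\geq1$, as $\Zb/2[G]$ is induced.

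The main obstacle is the bookkeeping of the connecting maps in \ref{cori} and \ref{coriii}. In \ref{cori} one has to show that the two norm conditions are exactly what make $a_*\alpha'$ land in the hypothesized set. In \ref{coriii} one has to pick out the correct summand of $L_d$ and verify that the octahedral diagram \eqref{diagtriangle} makes all the maps on $G$-cohomology of a point compatible.
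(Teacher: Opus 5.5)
Your proposal is correct and is essentially the paper's proof: Lemma \ref{thtriangle} combined with Theorem \ref{thKvH}, applied to the triangles $0\to\Zb/2\to\Zb/2[G]\to\Zb/2\to 0$, $\Zb/2\to K_d\to L_d\to$ and \eqref{triangleJ}. In (i), your triangle $\Zb(d+1)\to\Zb[G]\to\Zb(d)\to$ with the Bockstein of $\alpha$ is equivalent to the paper's $\Zb/2\to\Zb[G][1]\to(\Zb\oplus\Zb(1))[1]\to$, because the boundary map of \eqref{sesGdd+1} factors as $a_*$ composed with that Bockstein.

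The loose ends you flag are immediate. In (iii), the shifted summand contributes $\Hr^{d-1}(G,\Zb(d-1)[1])=\Hr^d(G,\Zb(d-1))=0$, so the $\Zb(d-1)$ summand is the one carrying the surjection and no octahedral compatibility check is needed. In (iv), your claim that $\Hr^j(G,J)=0$ for all $j\geq 1$ is false, since in fact $\Hr^1(G,J)\simeq\Zb/2$; however, you only use $\Hr^d(G,J)=0$, and this does hold because $d\geq 2$.
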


\begin{proof}
In each of the four cases, we apply Lemma \ref{thtriangle} with $j=d$ and $M=\Zb/2$ to an appropriate distinguished triangle $\Zb/2\xrightarrow{a} N\xrightarrow{b} P\xrightarrow{c}$\,. In all cases, the induced morphism $\Hr^{d-1}(G,P)\xrightarrow{c_*} \Hr^d(G,\Zb/2)=\Zb/2$ is bijective. One therefore deduces from the conclusion of Lemma \ref{thtriangle} and from the surjectivity of $\oev^d_X$ (see Theorem \ref{thKvH}) that~$\ev_{\nr,X,P}^{d-1}$ is surjective. In each case, this is equivalent to the desired assertion.

The short exact sequence of $G$-modules $0\to\Zb[G]\to\Zb\oplus\Zb(1)\to\Zb/2\to 0$ induces a distinguished triangle $\Zb/2\to\Zb[G][1]\to(\Zb\oplus\Zb(1))[1]\to$\,. Applying Lemma \ref{thtriangle} to it yields \ref{cori}.

To prove \ref{corii}, apply Lemma~\ref{thtriangle} to the distinguished triangle stemming from the short exact sequence $0\to\Zb/2\to\Zb/2[G]\to\Zb/2\to 0$.

Assertion \ref{coriii} is obtained by applying Lemma \ref{thtriangle} to the distinguished triangle $\Zb/2\to K_d\to L_d\to$ appearing in \eqref{diagtriangle}. That the conclusion of Lemma \ref{thtriangle} implies the desired result in this case follows from Lemma \ref{lemLd}.

Finally, to prove \ref{coriv}, apply Lemma \ref{thtriangle} to the distinguished triangle \eqref{triangleJ}. 
\end{proof}

\begin{cor}
\label{cortriangle2}
Let $X$ be a smooth variety of dimension $d$ over~$\Rb$.
\begin{enumerate}[label=(\roman*)] 
\item 
\label{coroi}
For $d\geq 1$, if $\Hr^{d+1}(X(\Cb),\Zb)[2]$ has coniveau $\geq 2$, then $\ev^d_{\nr,X}$ is surjective.
\item 
\label{coroii}
For $d\geq 1$, if $\Hr^{d}(X(\Cb),\Zb/2)$ has coniveau $\geq 2$, then $\oev^{d-1}_{\nr,X}$ is surjective.
\item 
\label{coroiii}
For $d\geq 2$, if $\Hr^{d+1}(X(\Cb),\Zb)[2]=0$ and $\Hr^d(X(\Cb),\Zb)$ has coniveau~$\geq 2$, then $\ev^{d-1}_{\nr,X}$ is surjective.
\item 
\label{coroiv}
For $d\geq 2$, if $\Hr^d(X(\Cb),\Zb/2)=0$ and $\Hr^{d-1}(X(\Cb),\Zb/2)$ has coniveau $\geq 2$, then $\oev^{d-2}_{\nr,X}$ is surjective.
\end{enumerate}
\end{cor}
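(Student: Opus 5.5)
Each item of Corollary~\ref{cortriangle2} will be deduced from the corresponding item of Theorem~\ref{cortriangle1}. In each case we show that the class whose coniveau Theorem~\ref{cortriangle1} asks about is the image of a class in the group appearing in Corollary~\ref{cortriangle2}. Coniveau $\geq 2$ is preserved under pushforward along morphisms of coefficients, since a class vanishing on $U$ has all its images vanish on $U$.

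For \ref{coroi}, let $x\in\Hr^{d+1}(X(\Cb),\Zb)$ lie in the kernel of both norm maps. Composing the two norms with the restrictions back to $\Hr^{d+1}(X(\Cb),\Zb)$ gives $x\mapsto x+\sigma^*x$ and $x\mapsto x-\sigma^*x$. Hence $x+\sigma^*x=0$ and $x-\sigma^*x=0$, so $2x=0$ and $x\in\Hr^{d+1}(X(\Cb),\Zb)[2]$, which has coniveau $\geq2$ by hypothesis. Theorem~\ref{cortriangle1}\,\ref{cori} then applies.

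For \ref{coroii}, the hypothesis directly contains the image of $\Hr^d_G(X(\Cb),\Zb/2)\to\Hr^d(X(\Cb),\Zb/2)$, so Theorem~\ref{cortriangle1}\,\ref{corii} applies.

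For \ref{coroiii}, we study $\Hr^d_G(X(\Cb),K_d)$ using the second row $\Zb[G]\xrightarrow{\psi}\Zb[G]\to K_d\to$ of \eqref{diagtriangle}. Since $\psi$ has saturated image $\Zb(d)$, its cokernel is $\Zb(d+1)$ and its kernel is $\Zb(d+1)$ as well. So $K_d$ has cohomology $\Zb(d+1)$ in degree $-1$ (the kernel) and $\Zb(d+1)$ in degree $0$ (the cokernel).

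By \eqref{Shapiro}, the row gives an exact sequence
$$\Hr^d(X(\Cb),\Zb)\xrightarrow{\psi_*}\Hr^d(X(\Cb),\Zb)\to\Hr^d_G(X(\Cb),K_d)\to\Hr^{d+1}(X(\Cb),\Zb)\xrightarrow{\psi_*}\Hr^{d+1}(X(\Cb),\Zb).$$
Here $\psi_*$ is $1\pm\sigma^*$. Now take the image $y$ of a class $\alpha\in\Hr^d_G(X(\Cb),\Zb/2)$. Its image in $\Hr^{d+1}(X(\Cb),\Zb)$ factors through the top row of \eqref{diagtriangle}, namely through the boundary $\Hr^d_G(X(\Cb),\Zb/2)\to\Hr^{d+1}_G(X(\Cb),\Zb[G])=\Hr^{d+1}(X(\Cb),\Zb)$. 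That boundary lands in $2$-torsion because $\alpha$ is $2$-torsion. By hypothesis the $2$-torsion vanishes, so this image is zero.

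Hence $y$ comes from $\Hr^d(X(\Cb),\Zb)$, which has coniveau $\geq 2$, and therefore $y$ has coniveau $\geq2$. Theorem~\ref{cortriangle1}\,\ref{coriii} then applies.

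For \ref{coroiv}, we use \eqref{dist1}, which gives an exact sequence
$$\Hr^{d-1}(X(\Cb),\Zb/2)\to\Hr^d_G(X(\Cb),J)\to\Hr^d(X(\Cb),\Zb/2)=0.$$
So every class of $\Hr^d_G(X(\Cb),J)$ comes from $\Hr^{d-1}(X(\Cb),\Zb/2)$, which has coniveau $\geq2$. Theorem~\ref{cortriangle1}\,\ref{coriv} then applies.

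The main point needing care is \ref{coroiii}. There we must check that the map $\Zb/2\to K_d$ composed with $K_d\to\Zb[G][1]$ agrees with the top-row boundary $\Zb/2\to\Zb[G][1]$, which is how \eqref{diagtriangle} was built via (TR3). We also need the identification of $\psi_*$ under \eqref{Shapiro}. If the compatibility holds only up to sign or an automorphism of $\Zb[G]$, the $2$-torsion argument is unaffected.
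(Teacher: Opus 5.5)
Your proposal is correct and follows essentially the paper's proof. Each item is reduced to the corresponding item of Theorem~\ref{cortriangle1} via the same exact sequences. The one variation is in \ref{coroi}: you show that the common kernel of the two norm maps is $2$-torsion by computing restriction$\circ$norm $=1\pm\sigma^*$, whereas the paper identifies this kernel with the image of the boundary map from $\Hr^d_G(X(\Cb),\Zb/2)$ in the long exact sequence of $0\to\Zb[G]\to\Zb\oplus\Zb(1)\to\Zb/2\to0$.

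In \ref{coroiii}, the compatibility with the top row of \eqref{diagtriangle} that you flag is unnecessary. The image of $\Hr^d_G(X(\Cb),\Zb/2)$ in $\Hr^{d+1}(X(\Cb),\Zb)$ is $2$-torsion simply because it is the image of a $2$-torsion group, which is exactly how the paper argues.
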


\begin{proof}
All assertions follow from the respective assertions of Theorem~\ref{cortriangle1}. 

To show that the hypothesis of Corollary~\ref{cortriangle2}\,\ref{coroi} implies that of Theorem~\ref{cortriangle1}\,\ref{cori}, use the short exact sequence $0\to\Zb[G]\to\Zb\oplus\Zb(1)\to\Zb/2\to 0$.

Case \ref{coroii} is immediate.

Suppose that the assumptions of \ref{coroiii} are satisfied. Taking the $G$-equivariant cohomology of the second row of \eqref{diagtriangle} yields an exact sequence
\begin{equation*}
\Hr^d(X(\Cb),\Zb)\to\Hr^d_G(X(\Cb),K_d)\to\Hr^{d+1}(X(\Cb),\Zb).
\end{equation*}
As $\Hr^{d+1}(X(\Cb),\Zb)[2]=0$, any $2$-torsion class in $\Hr^d_G(X(\Cb),K_d)$ lifts to $\Hr^d(X(\Cb),\Zb)$, and hence has coniveau $\geq 2$. Applying Theorem \ref{cortriangle1}\,\ref{coriii} proves \ref{coroiii}.

Assume finally that the hypotheses of \ref{coroiv} hold. Taking $G$-equivariant cohomology in \eqref{dist1}, we get an exact sequence
$$\Hr^{d-1}(X(\Cb),\Zb/2)\to\Hr^d_G(X(\Cb),J)\to\Hr^{d}(X(\Cb),\Zb/2).$$
As $\Hr^{d}(X(\Cb),\Zb/2)=0$ and $\Hr^{d-1}(X(\Cb),\Zb/2)$ has coniveau $\geq 2$, we deduce that all classes in~$\Hr^d_G(X(\Cb),J)$ have coniveau $\geq 2$ and one can apply Theorem \ref{cortriangle1}\,\ref{coriv}.
\end{proof}

\begin{rem}
Some other particular cases of Lemma \ref{thtriangle} are of interest. For instance, applying it to the distinguished triangle associated with the short exact sequence 
$0\to\Zb/2\to\Qb/\Zb[G]\to(\Qb/\Zb\oplus\Qb/\Zb(1))\to 0$
 shows that if $\Hr^d(X(\Cb),\Qb/\Zb)[2]$ has coniveau $\geq 2$, then the evaluation map $\Hr^{d-1}_{\nr}(X,\Qb/\Zb(d))\to \Hr^0(X(\Rb),\Zb/2)$ is surjective.
The study of evaluation maps with $\Qb/\Zb(j)$ coefficients parallels that with $\Zb/2$ or~$\Zb(j)$ coefficients, but we do not pursue it for lack of specific motivation.
\end{rem}

\subsection{Nonsurjective evaluation maps}

We start with an easy lemma in degree~$\leq\nobreak 2$.

\begin{lem}
\label{lemmepetitdegre}
Let $X$ be a smooth variety over $\Rb$. Let $M$ be a $G$-module. The natural morphism $\Hr^j_G(X(\Cb),M)\to \Hr^j_{\nr}(X,M)$ is bijective if $j\leq 1$ and surjective if $j=2$.
\end{lem}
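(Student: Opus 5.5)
We will use the Bloch--Ogus (coniveau) spectral sequence for $G$-equivariant cohomology on $X$, in the form recalled in \cite[\S5]{benoistIntegralHodgeConjecture2020}. It reads
$$E_2^{p,q}=\Hr^p(X,\Hsc^q_X(M))\Rightarrow \Hr^{p+q}_G(X(\Cb),M).$$
Bloch--Ogus theory (Gersten resolution, valid here as in \emph{loc.\,cit.}) gives $E_2^{p,q}=0$ for $p>q$, since the Gersten complex computing $\Hr^p(X,\Hsc^q_X(M))$ is concentrated in degrees $0\leq p\leq q$. The map $\Hr^j_G(X(\Cb),M)\to\Hr^j_{\nr}(X,M)=E_2^{0,j}$ is the edge map onto $E_\infty^{0,j}\subset E_2^{0,j}$. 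So the task is to control the differentials leaving $E_r^{0,j}$ and the lower filtration pieces $E_\infty^{p,j-p}$ for $p\geq1$.

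\emph{Surjectivity for $j\leq 2$.} The differentials leaving $E_r^{0,j}$ are $d_r:E_r^{0,j}\to E_r^{r,j-r+1}$ with $r\geq2$. The target vanishes once $r>j-r+1$, that is, $2r>j+1$. For $r\geq 2$ and $j\leq 2$ we have $2r\geq 4>j+1$. Hence all these differentials vanish, $E_\infty^{0,j}=E_2^{0,j}$, and the edge map is surjective for $j\leq 2$.

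\emph{Injectivity for $j\leq 1$.} The kernel of the edge map is the filtration step $F^1\Hr^j_G$, which is built from $E_\infty^{p,j-p}$ with $p\geq 1$. These terms vanish unless $p\leq j-p$, that is, $2p\leq j$. For $j\leq 1$ and $p\geq 1$ this is impossible, so the kernel is zero and the map is bijective for $j\leq1$.

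The step that needs care is justifying the vanishing $E_2^{p,q}=0$ for $p>q$ in the equivariant setting, and with an arbitrary $G$-module $M$ that need not be torsion. For this I would cite \cite[(5.1),(5.2)]{benoistIntegralHodgeConjecture2020}, which establish the Gersten resolution of $\Hsc^q_X(M)$: the term in degree $p$ is $\bigoplus_{x\in X^{(p)}}(i_x)_*\Hr^{q-p}_G(x,M(-p))$, the twist being the Tate twist $M(-p)=M\otimes\Zb(-p)$ rather than a degree shift. That term vanishes for $p>q$ because the equivariant cohomology of a point in negative degree is zero.

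An alternative, more hands-on route would argue directly. For injectivity when $j\leq1$: a class vanishing on a dense open $U$ is killed there, and equivariant purity together with the Gysin sequence shows that restriction to a dense open set is injective in degrees $\leq1$, because the support lies in real codimension $\geq 2$. The spectral sequence argument packages this, so I would present that one.
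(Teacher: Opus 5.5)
Your proof is correct and is essentially the paper's argument. The paper deduces the lemma from purity or, alternatively, from the Bloch--Ogus spectral sequence $E_2^{p,q}=\Hr^p(X,\Hsc^q_X(M))\Rightarrow\Hr^{p+q}_G(X(\Cb),M)$ with $E_2^{p,q}=0$ for $p>q$ (citing \cite[\S5.1]{benoistIntegralHodgeConjecture2020}); that is exactly your main route, and your sketched alternative is the purity route. Your bookkeeping is right: no differential out of $E_r^{0,j}$ can be nonzero when $j\leq 2$, and no term $E_\infty^{p,j-p}$ with $p\geq 1$ survives when $j\leq 1$.
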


\begin{proof}
This is a consequence of purity; one can also use the Bloch--Ogus spectral sequence and the vanishing of its $E_2^{p,q}$ terms for $p>q$, see e.g.\,\cite[\S5.1]{benoistIntegralHodgeConjecture2020}.
\end{proof}

\begin{cor}
\label{corpetitdegre}
If $X$ is a smooth variety over $\Rb$, then $\oev^j_{\nr,X}$ and~$\oev^j_{X}$ have the same image for $j\in\{0,1,2\}$, and $\ev^j_{\nr,X}$ and $\ev^j_{X}$ have the same image for $j\in\{1,2\}$. 
\end{cor}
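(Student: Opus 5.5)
The corollary follows formally from Lemma~\ref{lemmepetitdegre}, once we check that the evaluation maps factor compatibly. By definition, $\ev^j_{\nr,X,M}$ is induced by $\ev^j_{X,M}$. Concretely, the composition
$$\Hr^j_G(X(\Cb),M)\to\Hr^j_{\nr}(X,M)\xrightarrow{\ev^j_{\nr,X,M}}\Hr^0(X(\Rb),\Hr^j(G,M))$$
equals $\ev^j_{X,M}$. The reason is that the value of the unramified class at a real point $x$ is computed by restricting a representative on a Zariski open neighbourhood $U$ of $x$ to $x$. The class $\alpha$ itself restricts to such a representative on every $U$, so its value at $x$ is $\alpha|_x$. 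Consequently, the image of $\ev^j_{X,M}$ is always contained in the image of $\ev^j_{\nr,X,M}$. This holds for every $j$.

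For the reverse inclusion, Lemma~\ref{lemmepetitdegre} states that $\Hr^j_G(X(\Cb),M)\to\Hr^j_{\nr}(X,M)$ is surjective whenever $j\leq 2$. Take any $\gamma\in\Hr^j_{\nr}(X,M)$ and lift it to a class $\alpha\in\Hr^j_G(X(\Cb),M)$. By the factorization above, $\ev^j_{\nr,X,M}(\gamma)=\ev^j_{X,M}(\alpha)$. Hence the two images coincide for $j\leq 2$.

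We then specialize the coefficients. Taking $M=\Zb/2$ with $j\in\{0,1,2\}$ gives the statement for $\oev^j$. Taking $M=\Zb(j)$ with $j\in\{1,2\}$ gives the statement for $\ev^j$, since $\ev^j_X$ is only defined for $j\geq1$.

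The only point that is not pure bookkeeping is the compatibility of evaluation at real points with passage to unramified cohomology, i.e.\ that the unramified evaluation is computed through any local representative. This is built into the definition of $\ev^j_{\nr,X,M}$ as the map induced by $\ev^j_{X,M}$ on the sheafification $\Hsc^j_X(M)$.
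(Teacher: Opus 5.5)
Your proof is correct and follows the paper's approach. The paper states this corollary as an immediate consequence of Lemma~\ref{lemmepetitdegre}: it combines the surjectivity of $\Hr^j_G(X(\Cb),M)\to\Hr^j_{\nr}(X,M)$ for $j\leq 2$ with the compatibility of $\ev^j_{X,M}$ and $\ev^j_{\nr,X,M}$ via this natural map (already used in the introduction), then specializes to $M=\Zb/2$ and $M=\Zb(j)$, exactly as you do.
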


The main result of this subsection applies to unramified cohomology in degree $3$.

\begin{thm}
\label{th3bis}
Let $X$ be a smooth variety over $\Rb$.
\begin{enumerate}[label=(\roman*)] 
\item
\label{th3bisi}
 Suppose that any class~${\alpha\in \Hr^4_G(X(\Cb),\Zb(4))}$ of coniveau $\geq 2$ that is of the form $2\beta$ for some ${\beta\in \Hr^4_G(X(\Cb),\Zb(4))}$ is in fact of the form~$2\beta'$ for some~${\beta'\in\ker(\ev^4_X)}$. Then $\oev^3_X$ and~$\oev^3_{\nr,X}$ have the same image.
 \item 
\label{th3bisii}
Suppose that any class $\alpha\in \Hr^4(X(\Cb),\Zb)$ of coniveau $\geq 2$ that lifts to a class in~${\Hr^4_G(X(\Cb),\Zb(4))}$ in fact lifts to a class in $\ker(\ev^4_X)$. Then $\ev^3_X$ and~$\ev^3_{\nr,X}$ have the same image.
\end{enumerate}
\end{thm}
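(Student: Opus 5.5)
The plan is to represent an unramified class by an honest equivariant class on the complement $U$ of a codimension $\geq2$ closed subset. A boundary map that is bijective on $\Hr^*(G,-)$ turns it into a degree-$4$ class on $U$, which we extend to $X$. The hypothesis then corrects the extension by a class in $\ker(\ev^4_X)$, and we pull the corrected class back to degree $3$ on $X$. One inclusion is trivial, since $\Hr^3_G(X(\Cb),M)\to\Hr^3_{\nr}(X,M)$ is compatible with evaluation. So we must show that every value $\oev^3_{\nr,X}(\gamma)$ (resp. $\ev^3_{\nr,X}(\gamma)$) is attained by a global class.

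\emph{Step 1 (geometric setup).} As in the proof of Lemma~\ref{thtriangle}, Bloch--Ogus theory gives a closed subset $Z\subset X$ of codimension $\geq2$ and a class $\gamma\in\Hr^3_G(U(\Cb),M)$, with $U:=X\setminus Z$, representing the unramified class; here $M=\Zb/2$ in \ref{th3bisi} and $M=\Zb(3)$ in \ref{th3bisii}. Equivariant cohomology with supports in a closed subset of codimension $\geq3$ vanishes in degrees $\leq5$. Hence removing $Z_{\mathrm{sing}}$ and the components of codimension $\geq 3$ does not change $\Hr^j_G(-,N)$ for $j\leq 4$, nor $\Hr^5_{G,Z}$. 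We may therefore assume that $Z$ is smooth of pure codimension $2$. Equivariant purity then identifies $\Hr^4_{G,Z}(X(\Cb),N)$ and $\Hr^5_{G,Z}(X(\Cb),N)$ with $\Hr^0_G(Z(\Cb),N(-2))$ and $\Hr^1_G(Z(\Cb),N(-2))$. Since $Z$ has real codimension $\geq2$, it does not disconnect any component of $X(\Rb)$. Consequently locally constant evaluations on $X(\Rb)$ are determined by their restrictions to $U(\Rb)$.

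\emph{Step 2 (case \ref{th3bisi}).} Let $b$ be the Bockstein of $0\to\Zb(4)\xrightarrow{2}\Zb(4)\to\Zb/2\to0$. It induces isomorphisms $\Hr^3(G,\Zb/2)\isoto\Hr^4(G,\Zb(4))$, so $\ev^4(b\gamma)$ and $\oev^3(\gamma)$ correspond. The obstruction to extending $b\gamma$ to $X$ is its image in $\Hr^5_{G,Z}(X(\Cb),\Zb(4))$. This image is $b(\partial\gamma)$, where $\partial\gamma\in\Hr^4_{G,Z}(X(\Cb),\Zb/2)=\Hr^0_G(Z(\Cb),\Zb/2)$. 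The relevant integral group is $\Hr^0_G(Z(\Cb),\Zb(2))$, which has trivial $G$-action. It therefore surjects onto $\Hr^0_G(Z(\Cb),\Zb/2)$, so $b(\partial\gamma)=0$ and $b\gamma=\beta|_U$ for some $\beta\in\Hr^4_G(X(\Cb),\Zb(4))$. Now $2\beta$ vanishes on $U$, so it has coniveau $\geq2$ and is divisible by $2$. By hypothesis, $2\beta=2\beta'$ with $\ev^4_X(\beta')=0$. Then $\beta-\beta'$ is $2$-torsion, so the Bockstein exact sequence gives $\beta-\beta'=b\gamma'$ with $\gamma'\in\Hr^3_G(X(\Cb),\Zb/2)$. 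On $U(\Rb)$ we get $\oev^3(\gamma')=\ev^4(\beta)=\ev^4(b\gamma)=\oev^3(\gamma)$. By Step~1, this equality extends to all of $X(\Rb)$.

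\emph{Step 3 (case \ref{th3bisii}).} We argue in the same way, replacing the Bockstein by the boundary $\partial'$ of $0\to\Zb(4)\to\Zb[G]\to\Zb(3)\to0$. This boundary is cup product with the generator of $\Hr^1(G,\Zb(1))$, hence bijective on $\Hr^j(G,-)$ for $j\geq1$. By \eqref{Shapiro} and \eqref{rcles}, the kernel of the forgetful map $\Hr^4_G(X(\Cb),\Zb(4))\to\Hr^4(X(\Cb),\Zb)$ is exactly the image of $\partial'$. We extend $\partial'\gamma$ to some $\beta$ on $X$ using the purity computation of Step~1. The forgetful image $\alpha$ of $\beta$ vanishes on $U$, so it has coniveau $\geq2$ and lifts to $\Hr^4_G(X(\Cb),\Zb(4))$. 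The hypothesis then gives $\beta'\in\ker(\ev^4_X)$ with the same forgetful image $\alpha$. Hence $\beta-\beta'=\partial'\gamma'$ for some $\gamma'\in\Hr^3_G(X(\Cb),\Zb(3))$, and $\ev^3_X(\gamma')=\ev^3_{\nr,X}(\gamma)$.

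The main obstacle is the extension step, i.e.\ the vanishing of the obstruction in $\Hr^5_{G,Z}(X(\Cb),\Zb(4))$. This requires a careful purity computation, keeping track of the Tate twist shift by $2$ and of the compatibility of $\partial$ with the Bockstein (resp.\ with $\partial'$). The key input is that the twist on the support becomes even, so that integral lifts of locally constant classes on $Z(\Cb)$ exist. In case~\ref{th3bisii} I would first check directly that the image of $\partial\gamma$ under $\partial'$ dies in $\Hr^1_G(Z(\Cb),\Zb(2))$. If that direct check fails, I would replace $\gamma$ by its image on a smaller open set, or modify $\gamma$ by a class supported near $Z$.
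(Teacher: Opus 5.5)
Your route is the paper's. You use the same two sequences $0\to\Zb(4)\xrightarrow{2}\Zb(4)\to\Zb/2\to0$ and $0\to\Zb(4)\to\Zb[G]\to\Zb(3)\to0$, extend the degree-$4$ boundary class across a smooth codimension-$2$ stratum by purity, correct it with the hypothesis, and conclude by density of $U(\Rb)$.

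Case (i) is complete. Killing the obstruction through the surjectivity of $\Hr^0_G(Z(\Cb),\Zb(2))\to\Hr^0_G(Z(\Cb),\Zb/2)$ is a valid alternative to the paper's argument.

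\textbf{The gap is the extension step in case (ii).} You say that $\partial'\gamma$ extends ``using the purity computation of Step~1''. But Step~1 only identifies the groups, and your last paragraph concedes that the vanishing of the obstruction is unchecked. Your guiding heuristic (``the twist on the support becomes even'') does not apply here: the relevant support group is $\Hr^4_{G,Z}(X(\Cb),\Zb(3))\cong\Hr^0_G(Z(\Cb),\Zb(1))$, which has an odd twist. The fallbacks are not viable either. Shrinking $U$ only enlarges $Z$ and the obstruction group, and $Z$ cannot acquire codimension-$1$ components without losing the representation of the unramified class. ``Modifying $\gamma$ near $Z$'' is unspecified, and it would change the class whose evaluation you are tracking.

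\textbf{The missing observation} is the one the paper uses, and it treats both cases at once: $\partial'\gamma$ is killed by $2$. Indeed, by \eqref{rcles} with $\Fc=\Zb(4)$, $\partial'$ is cup product with the generator of $\Hr^1(G,\Zb(1))\simeq\Zb/2$. So the obstruction is a $2$-torsion element of $\Hr^5_{G,Z}(X(\Cb),\Zb(4))\cong\Hr^1_G(Z(\Cb),\Zb)$. This group is torsion free by Hochschild--Serre: $\Hr^1(G,-)$ vanishes on the permutation module $\Hr^0(Z(\Cb),\Zb)$, and $\Hr^1(Z(\Cb),\Zb)$ is torsion free.

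Your proposed direct check also succeeds. The real-complex sequence on $Z$ contains $\Hr^0(Z(\Cb),\Zb)\to\Hr^0_G(Z(\Cb),\Zb(1))\xrightarrow{\partial'}\Hr^1_G(Z(\Cb),\Zb(2))$. The first map is onto: a $G$-anti-invariant locally constant function vanishes on $G$-stable components of $Z(\Cb)$ and is a norm on swapped pairs. Hence $\partial'$ vanishes on the support group.

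With either argument inserted, the rest of your Step~3 goes through as written.
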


\begin{proof}
We treat both assertions simultaneously, using the short exact sequence
\begin{equation}
\label{sosNP}
0\to\Zb(4)\xrightarrow{a} N\to P\to 0,
\end{equation}
where $(N,P)=(\Zb(4),\Zb/2)$ in case \ref{th3bisi}, and $(N,P)=(\Zb[G],\Zb(3))$ in case \ref{th3bisii} (in the latter case, the exact sequence \eqref{sosNP} is a special case of \eqref{rcses}). The image of $\ev^3_{X,P}$ is obviously contained in that of $\ev^3_{\nr,X,P}$, and we prove the converse.

Fix $\gamma\in\Hr^3_{\nr}(X,P)$. The unramified cohomology group $\Hr^3_{\nr}(X,P)$ can be described as the subgroup of $\colim_V \Hr^{3}_G(V(\Cb),P)$ (where $V$ runs over all dense open subsets of $X$) consisting of those classes whose residues along all codimension $1$ integral subvarieties of $X$ vanish (see \cite[(5.1) and (5.2)]{benoistIntegralHodgeConjecture2020}). It follows that there exists a closed subset $Z\subset X$ of codimension $\geq 2$, with complement $U:=X\setminus Z$, such that~$\gamma$ is represented by a class $\delta\in\Hr^3_G(U(\Cb),P)$.

Denote by $\partial$ the boundary maps of \eqref{sosNP}. Choose a closed subset $Z'\subset Z$ of codimension $\geq 3$ in $X$ such that $Y:=Z\setminus Z'$ is smooth of pure codimension $2$ in~$U':=X\setminus Z'$. In view of equivariant purity (see \cite[(1.21)]{benoistIntegralHodgeConjecture2020}), the long exact sequence of cohomology with support in $Y$ reads
\begin{equation}
\label{lescohosupport}
\Hr^4_G(U'(\Cb),\Zb(4))\to \Hr^4_G(U(\Cb),\Zb(4))\to \Hr^1_G(Y(\Cb),\Zb).
\end{equation}
As $\partial(\delta)$ is killed by $2$ (this is obvious in case \ref{th3bisi} and follows from~\eqref{rcles} in case~\ref{th3bisii}), its image in the torsion free group $\Hr^1_G(Y(\Cb),\Zb)$ vanishes, so \eqref{lescohosupport} shows that $\partial(\delta)$ lifts to $\Hr^4_G(U'(\Cb),\Zb(4))$. Iterating the argument (by choosing a closed subset~$Z''\subset Z'$ of codimension $\geq 4$ in $X$ such that $Z'\setminus Z''$ is smooth of pure codimension~$3$ in~$X\setminus Z''$, etc.) implies that $\partial(\delta)\in\Hr^4_G(U(\Cb),\Zb(4))$ lifts to a class~$\beta\in\Hr^4_G(X(\Cb),\Zb(4))$.

As $(a_*\beta)|_U=a_*\partial(\delta)=0$, the class $a_*\beta$ has coniveau $\geq 2$. The hypothesis (applied with $\alpha=a_*\beta$) therefore gives $\beta'\in\ker(\ev_X^4)$ with $a_*\beta'=a_*\beta$. By exactness of the long exact sequence attached to \eqref{sosNP}, we deduce the existence of $\varepsilon \in\Hr^3_G(X(\Cb),P)$ such that $\partial(\varepsilon)=\beta-\beta'$. Restricting to $U$ yields $\partial(\varepsilon)|_U=\partial(\delta)-\beta'|_U$. Since the boundary map $\partial$ induces an isomorphism $\Hr^3(G,P)\isoto\Hr^4(G,\Zb(4))$, we deduce that $\ev^3_{X,P}(\varepsilon)|_{U(\Rb)}=\ev^3_{U,P}(\delta)=\ev^3_{\nr,X,P}(\gamma)|_{U(\Rb)}$. As $U(\Rb)$ is dense in~$X(\Rb)$ by smoothness of $X$, we get~$\ev^3_{X,P}(\varepsilon)=\ev^3_{\nr,X,P}(\gamma)$, which completes the proof.
\end{proof}

We will use Theorem \ref{th3bis}\,\ref{th3bisi} twice through the following corollary.

\begin{cor}
\label{corth3bis}
Let $X$ be a smooth proper variety over $\Rb$. Suppose that any Hodge class in $\Hr^4(X(\Cb),\Zb)$
lifts to $\ker(\ev_X^4)$. Then~$\oev_X^3$ and $\oev_{\nr,X}^3$ have the same image.
\end{cor}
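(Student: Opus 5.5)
The plan is to check the hypothesis of Theorem~\ref{th3bis}\,\ref{th3bisi}; the conclusion then follows. So fix a class $\alpha\in\Hr^4_G(X(\Cb),\Zb(4))$ of coniveau $\geq 2$, and suppose $\alpha=2\beta$ for some $\beta\in\Hr^4_G(X(\Cb),\Zb(4))$. We must produce $\beta'\in\ker(\ev^4_X)$ with $2\beta'=\alpha$.

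\emph{Step 1: the image of $\beta$ in $\Hr^4(X(\Cb),\Zb)$ is a Hodge class.} Write $\alpha_{\Cb}$ and $\beta_{\Cb}$ for the images of $\alpha$ and $\beta$ under the forgetful map $\Hr^4_G(X(\Cb),\Zb(4))\to\Hr^4(X(\Cb),\Zb)$.
\begin{enumerate}[label=(\alph*)]
\item The class $\alpha_{\Cb}$ still vanishes on the complement of some closed subset $Z\subset X$ of codimension $\geq 2$. Hence it comes from $\Hr^4_{Z(\Cb)}(X(\Cb),\Zb)$.
\item We remove from $Z$ a closed subset of codimension $\geq 3$ in $X$, whose removal does not affect $\Hr^4$. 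By purity, $\Hr^4_{Z(\Cb)}(X(\Cb),\Zb)$ is then spanned by the fundamental classes of the codimension-$2$ components of $Z_{\Cb}$.
\item Therefore $\alpha_{\Cb}$ is algebraic. Since $X$ is smooth and proper, $\alpha_{\Cb}$ is in particular a Hodge class.
\item We have $2\beta_{\Cb}=\alpha_{\Cb}$, so the image of $\beta_{\Cb}$ in $\Hr^4(X(\Cb),\Cb)$ is half of a class of type $(2,2)$, hence itself of type $(2,2)$. Thus $\beta_{\Cb}$ is a Hodge class.
\end{enumerate}

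\emph{Step 2: adjust $\beta$ by a class in the kernel of $\ev^4_X$.} By the hypothesis of the corollary, $\beta_{\Cb}$ lifts to a class $\beta'\in\ker(\ev^4_X)$. Then $\beta-\beta'$ maps to zero in $\Hr^4(X(\Cb),\Zb)$. Apply the real-complex long exact sequence \eqref{rcles} with $\Fc=\Zb(1)$, using $\Zb(1)(1)=\Zb$ up to twist bookkeeping (equivalently, use \eqref{rcses} for $\Zb(4)\to\Zb[G]\to\Zb(5)$). It gives
\[
\beta-\beta'=\omega\smile\eta,
\]
where $\eta\in\Hr^3_G(X(\Cb),\Zb(1))$ and $\omega$ is the generator of $\Hr^1(G,\Zb(1))\simeq\Zb/2$. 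Since $2\omega=0$, we get $2(\beta-\beta')=0$, that is, $\alpha=2\beta'$ with $\beta'\in\ker(\ev^4_X)$. This is exactly the hypothesis of Theorem~\ref{th3bis}\,\ref{th3bisi}, so $\oev^3_X$ and $\oev^3_{\nr,X}$ have the same image.

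\emph{Expected obstacle.} The delicate point is Step 1: showing that a coniveau $\geq 2$ class in $\Hr^4$ is algebraic, and hence Hodge. This needs the purity computation of cohomology with supports, after discarding a subset of codimension $\geq 3$, exactly as in the proof of Theorem~\ref{th3bis}. It also needs the properness of $X$, so that "Hodge class" makes sense via the Hodge decomposition. The rest is a formal diagram chase.
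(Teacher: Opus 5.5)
Your proposal is correct and follows the paper's proof: you verify the hypothesis of Theorem~\ref{th3bis}\,\ref{th3bisi} by observing that the coniveau $\geq 2$ class $\alpha$ is algebraic, so its image in $\Hr^4(X(\Cb),\Zb)$ is Hodge and hence so is that of $\beta$; the hypothesis then gives $\beta'\in\ker(\ev^4_X)$ with the same image as $\beta$. The only difference is the final step: writing $\pi:X_{\Cb}\to X$ for the base change map, the paper applies $\pi_*$ to $\pi^*\beta'=\pi^*\beta$ and uses $\pi_*\pi^*=2$, whereas you use \eqref{rcles} to see that $\beta-\beta'$ lies in the $2$-torsion image of $\smile\omega$, which is an equivalent argument (the correct choice there is $\Fc=\Zb(4)$, as in your parenthetical, not $\Fc=\Zb(1)$).
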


\begin{proof}
We verify the hypothesis of Theorem~\ref{th3bis}\,\ref{th3bisi}. In doing so, we identify the canonically isomorphic $G$-modules $\Zb(2)$ and $\Zb(4)$. Let $\pi:X_{\Cb}\to X$ be the base change morphism. Fix $\alpha\in\Hr^4_G(X(\Cb),\Zb(2))$ of coniveau $\geq2$, hence algebraic. Assume that $\alpha=2\beta$ for some $\beta\in\Hr^4_G(X(\Cb),\Zb(2))$. Since $\pi^*\alpha$ is algebraic, it is Hodge, and hence so is $\pi^*\beta$. By hypothesis, there exists $\beta'\in\ker(\ev_X^4)$ with~${\pi^*\beta'=\pi^*\beta}$.
Applying $\pi_*$ to this identity shows that $2\beta'=2\beta$, hence that~$\alpha=2\beta'$.
\end{proof}

\begin{rem}
\label{remH4nr}
Theorem \ref{th3bis} and its proof exploit the deep relation between degree~$3$ unramified cohomology classes and the coniveau $\geq 2$ property for degree $4$ cohomology classes (which is, for instance, used fruitfully in \cite{colliot-theleneCohomologieNonRamifiee2012}).
To prove an analogue of Theorem \ref{th3bis} for degree $4$ unramified cohomology classes, one might similarly try to exploit their relation to Griffiths groups of codimension $3$ cycles (see~\cite{voisinDegree4Unramified2012}). Unfortunately, we could not find a criterion that we are able to apply to any real algebraic variety of interest.
 \end{rem}

\subsection{Examples}

We now apply the results of this section to concrete varieties.

\subsubsection{Products with Enriques surfaces}

\begin{thm}
Let $S$ be an Enriques surface over $\Rb$ such that one half of $S$ is empty and such that $S(\Rb)$ has a connected component $\Pi_S$ of odd Euler characteristic. Let $E$ be a very general elliptic curve over $\Rb$ such that $E(\Rb)$ is not connected. Fix~$d\in\{2,3\}$ and set~$X:=S\times E^{d-2}$. Then $\ev^d_{\nr,X}$ is not surjective.
\end{thm}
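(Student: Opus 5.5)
For $d=2$, the statement follows directly from results already proved. By Corollary~\ref{corpetitdegre}, the maps $\ev^2_{\nr,S}$ and $\ev^2_S$ have the same image. By Theorem~\ref{thm:topological_argument_integral_case} (Step~\ref{Step1}), $\ev^2_S$ is not surjective under our hypotheses on $S$. Hence $\ev^2_{\nr,S}$ is not surjective. The case $d=3$ is the real content. There I would compare the images of $\ev^3_X$ and $\ev^3_{\nr,X}$ for $X=S\times E$ through Theorem~\ref{th3bis}\,\ref{th3bisii}. Step~\ref{Step1} of Theorem~\ref{thm:topological_argument_integral_case} already shows that the indicator function $\sigma$ of $\Pi_S\times\Pi_E$ is not in the image of $\ev^3_X$. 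So it suffices to verify the hypothesis of Theorem~\ref{th3bis}\,\ref{th3bisii}, after which $\sigma\notin\Im(\ev^3_{\nr,X})$.

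To verify that hypothesis, take $\alpha\in\Hr^4(X(\Cb),\Zb)$ of coniveau $\geq2$ admitting a lift $\beta\in\Hr^4_G(X(\Cb),\Zb(4))=\Hr^4_G(X(\Cb),\Zb(2))$. We must produce another lift lying in $\ker(\ev^4_X)$. First, a coniveau $\geq2$ class on the threefold $X$ is supported on a curve, so it is the class of an algebraic $1$-cycle on $X_\Cb$. Second, the equivariant cycle class of a real $1$-cycle on $X$ lies in $\ker(\ev^4_X)$. Indeed, its evaluation at a real point outside the support is zero. Such points are dense in $X(\Rb)$, and evaluation is locally constant on $X(\Rb)$. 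The task therefore reduces to showing that, modulo $\ker(\ev^4_X)$, the lift $\beta$ can be replaced by the equivariant cycle class of a real $1$-cycle with complex class $\alpha$.

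To do this I would describe $\Hr^4(X(\Cb),\Zb)$ by the Künneth formula. It consists of $\Hr^2(S)\otimes\Hr^2(E)$, $\Hr^4(S)\otimes\Hr^0(E)$, $\Hr^3(S)\otimes\Hr^1(E)$ (this last factor being $(\Zb/2)^2$), and a Tor term. Very generality of $E$ will be used to show that the coniveau $\geq2$ classes are generated by classes $[C\times\pt]$ and $[\pt\times E]$, which are classes of real curves, together with possibly some $2$-torsion classes. The torsion classes are where very generality matters, in the spirit of Benoist--Ottem on $S\times E$. Then, for a general lift $\beta$ of such an $\alpha$, the difference of $\beta$ and a real cycle class lies in the kernel of $\Hr^4_G(X(\Cb),\Zb(2))\to\Hr^4(X(\Cb),\Zb)$. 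By \eqref{rcles}, this kernel is the image of $\Hr^3_G(X(\Cb),\Zb(1))$ under cup product with the generator of $\Hr^1(G,\Zb(1))$. It then remains to adjust the lift by such classes so that its evaluation vanishes. At worst, one controls these evaluations through the canonical decomposition of \S\ref{parrestr} and Theorem~\ref{thm:image_of_total_restriction_to_the_real_locus}.

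I expect the main obstacle to be the precise determination of the coniveau $\geq2$ classes in $\Hr^4(X(\Cb),\Zb)$, especially the torsion ones coming from $\Hr^3(S,\Zb)\otimes\Hr^1(E,\Zb)$. Showing they are absent or realized by real curves is exactly where the very generality of $E$ must enter. If this turns out to be too delicate, I would instead fall back on Corollary~\ref{corth3bis}-type reasoning with Hodge classes, combined with $\Zb(3)$-coefficients.
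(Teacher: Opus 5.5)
Your framework is the paper's: for $d=2$ you combine Corollary~\ref{corpetitdegre} with Theorem~\ref{thm:topological_argument_integral_case}. For $d=3$ you verify the hypothesis of Theorem~\ref{th3bis}\,\ref{th3bisii} by showing that every coniveau~$\geq2$ class $\alpha\in\Hr^4(X(\Cb),\Zb)$ that lifts equivariantly is the class of a $1$-cycle defined over~$\Rb$. However, the two points you leave open or gloss over are precisely the content of the proof.

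\emph{Torsion.} As $\Hr^*(E(\Cb),\Zb)$ is torsion free, there is no Tor term. The only delicate summand is $\Hr^3(S(\Cb),\Zb)\otimes\Hr^1(E(\Cb),\Zb)\simeq\Hr^1(E(\Cb),\Zb/2)$, and the argument you are missing runs as follows.
\begin{enumerate}[label=(\alph*)]
\item The two other K\"unneth summands consist entirely of algebraic classes (Lefschetz $(1,1)$ with $\Hr^2(S,\Oc_S)=0$, and point classes).
\item Hence the component $\alpha_3$ of the algebraic class $\alpha$ in this summand is itself algebraic.
\item For $E$ very general, no nonzero class of this summand is algebraic. Indeed, \cite[Theorem~0.1]{benoistOttemFailureIHC2020} gives one non-algebraic class, and the monodromy of the universal family of elliptic curves acts transitively on $\Hr^1(E(\Cb),\Zb/2)\setminus\{0\}$.
\end{enumerate}
So $\alpha_3=0$. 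Your alternative ``absent or realized by real curves'' must be settled as ``absent'', and this is the only place where very generality enters.

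\emph{Realness.} For a curve $C$ on $S_\Cb$, the class $[C\times\pt]$ need not be that of a real cycle, and your proposal never uses $G$-invariance. Since $\alpha$ lifts to $\Hr^4_G(X(\Cb),\Zb(2))$, it is $G$-invariant. As $\Hr^0(E(\Cb),\Zb)$, $\Hr^2(E(\Cb),\Zb(1))$ and $\Hr^4(S(\Cb),\Zb(2))$ are trivial $G$-modules, $\alpha$ then lies in $\Hr^2(S(\Cb),\Zb(1))^G\otimes\Hr^2(E(\Cb),\Zb(1))\oplus\Hr^4(S(\Cb),\Zb(2))\otimes\Hr^0(E(\Cb),\Zb)$. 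You then need that every class of $\Hr^2(S(\Cb),\Zb(1))^G$ comes from $\Pic(S)$, which holds for two reasons:
\begin{enumerate}[label=(\alph*)]
\item Krasnov's equivariant cycle class map $\Pic(S)\to\Hr^2_G(S(\Cb),\Zb(1))$ is onto because $\Hr^2(S,\Oc_S)=0$;
\item the map $\Hr^2_G(S(\Cb),\Zb(1))\to\Hr^2(S(\Cb),\Zb(1))^G$ is onto by Hochschild--Serre, because $\Hr^1(S(\Cb),\Zb)=0$ and $S(\Rb)\neq\varnothing$.
\end{enumerate}
Real points of $S$ and $E$ then supply the remaining real cycles.

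Two further corrections. First, once $\alpha$ is the class of a real $1$-cycle, the equivariant cycle class of that cycle is already a lift of $\alpha$ lying in $\ker(\ev^4_X)$. That is all Theorem~\ref{th3bis}\,\ref{th3bisii} requires. There is no need to adjust a given lift by classes coming from $\Hr^3_G(X(\Cb),\Zb(1))$, and controlling evaluations via Theorem~\ref{thm:image_of_total_restriction_to_the_real_locus} plays no role. Second, your fallback through Corollary~\ref{corth3bis} cannot work here. That corollary only compares the images of $\oev^3_X$ and $\oev^3_{\nr,X}$, and $\oev^3_X$ is already surjective by Theorem~\ref{thKvH} since $\dim X=3$. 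It therefore says nothing about $\ev^3_{\nr,X}$.
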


\begin{proof}
When $d=2$, combine Theorem \ref{thm:topological_argument_integral_case} and Corollary \ref{corpetitdegre}.

Suppose now that $d=3$. Then $\ev^3_X$ is not surjective by Theorem~\ref{thm:topological_argument_integral_case}. To conclude the proof of the theorem, we check the hypothesis of Theorem \ref{th3bis}\,\ref{th3bisii}. In doing so, it is convenient to identify the canonically isomorphic $G$\nobreakdash-modules $\Zb$, $\Zb(2)$ and $\Zb(4)$. 

Fix ${\alpha\in \Hr^4_G(X(\Cb),\Zb(2))}$ whose image~$\beta\in \Hr^4(X(\Cb),\Zb)$ has coniveau $\geq 2$, and hence is algebraic. As $\Hr^*(E(\Cb),\Zb)$ is torsion free, the K\"unneth decomposition reads
\begin{equation}
\label{Kunneth}
\Hr^4(X(\Cb),\Zb)=\bigoplus_{i=2}^4\Hr^{i}(S(\Cb),\Zb)\otimes_\Zb\Hr^{4-i}(E(\Cb),\Zb).
\end{equation}
Let $(\beta_2,\beta_3,\beta_4)$ be the image of $\beta$ in \eqref{Kunneth}. As $E$ is very general, no nonzero class in $\Hr^{3}(S(\Cb),\Zb)\otimes_\Zb\Hr^{1}(E(\Cb),\Zb)\simeq \Hr^1(E(\Cb),\Zb/2)$ is algebraic (see \cite[Theorem~0.1]{benoistOttemFailureIHC2020}). To be precise, this reference only states that one of these classes is not algebraic; this implies our stronger assertion because the monodromy of the universal family of complex elliptic curves acts transitively on the nonzero classes in~$\Hr^1(E(\Cb),\Zb/2)$. Since the two other factors of \eqref{Kunneth} are entirely algebraic, one has~$\beta_3=0$.

Since $\beta=\beta_2+\beta_4$ lifts to $\Hr^4_G(X(\Cb),\Zb(2))$, it belongs to the $G$-invariant part of 
\begin{equation*}
\Hr^{2}(S(\Cb),\Zb(1))\otimes_\Zb\Hr^{2}(E(\Cb),\Zb(1))\oplus\Hr^{4}(S(\Cb),\Zb(2))\otimes_\Zb\Hr^{0}(E(\Cb),\Zb).
\end{equation*}
As the groups $\Hr^{0}(E(\Cb),\Zb)$ and $\Hr^{2}(E(\Cb),\Zb(1))$, as well as $\Hr^{4}(S(\Cb),\Zb(2))$, carry a trivial action of $G$, the class $\beta=\beta_2+\beta_4$ in fact belongs to 
\begin{equation}
\label{Kunneth3}
\Hr^{2}(S(\Cb),\Zb(1))^G\otimes_\Zb\Hr^{2}(E(\Cb),\Zb(1))\oplus\Hr^{4}(S(\Cb),\Zb(2))\otimes_\Zb\Hr^{0}(E(\Cb),\Zb).
\end{equation}
The four groups appearing in \eqref{Kunneth3} are spanned by classes of algebraic cycles defined over $\Rb$. Indeed, the groups $\Hr^{2}(E(\Cb),\Zb(1))=\Zb$ and ${\Hr^{4}(S(\Cb),\Zb(2))=\Zb}$ are generated by the class of a point (and $E$ and $S$ have real points), and $\Hr^{0}(E(\Cb),\Zb)$ is generated by the class of $E$. As for $\Hr^{2}(S(\Cb),\Zb(1))^G$, consider the composition
$$\Pic(S)\to \Hr^{2}_G(S(\Cb),\Zb(1))\to \Hr^{2}(S(\Cb),\Zb(1))^G$$
of Krasnov's equivariant cycle class map (for which see \cite{krasnovEquivariantGrothendieckCohomology1994} or \cite[\S 1.6.1]{benoistIntegralHodgeConjecture2020}) and of the natural morphism and note that the former is surjective by \cite[Proposition 2.8]{benoistIntegralHodgeConjecture2020} as~$\Hr^2(S,\Oc_S)=0$, and the latter is surjective by the Hochschild--Serre spectral sequence (see \cite[(1.4)]{benoistIntegralHodgeConjecture2020}) as $\Hr^1(S(\Cb),\Zb)=0$ and $S(\Rb)\neq\varnothing$.

This proves that $\beta\in\Hr^{4}(X(\Cb),\Zb(2))$ lifts to a class $\gamma\in\Hr^{4}_G(X(\Cb),\Zb(2))$ that is algebraic (in the sense that it is in the image of the equivariant cycle class map). Being algebraic, the class $\gamma$ vanishes when restricted to any real point of~$X$.
\end{proof}

Arguing as in the proof of Corollary \ref{corevd}, we deduce the following.

\begin{cor}
\label{corod=23}
For $d\in\{2,3\}$, there exists a smooth projective variety $X$ of dimension~$d$ over~$\Rb$ such that $\ev^d_{\nr,X}$ is not surjective.
\end{cor}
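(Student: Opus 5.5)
The plan is to deduce Corollary \ref{corod=23} from the theorem just proved, exactly as Corollary \ref{corevd} was deduced from Theorem \ref{thm:topological_argument_integral_case}. That theorem needs two inputs: a real Enriques surface $S$ with one half empty and a connected component $\Pi_S$ of $S(\Rb)$ of odd Euler characteristic, and (for $d=3$) a very general real elliptic curve $E$ with $E(\Rb)$ disconnected. We then set $X:=S\times E^{d-2}$, which is smooth and projective of dimension $d$, and the theorem gives that $\ev^d_{\nr,X}$ is not surjective.

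For the Enriques surface, I would use the classification of real Enriques surfaces in \cite[Appendix C, pp.\,228--230]{degtyarevRealEnriquesSurfaces2000}, as in the proof of Corollary \ref{corevd}. From the tables one picks a deformation type where one half is empty and some component is nonorientable with odd Euler characteristic. A real projective plane component (Euler characteristic $1$) lying in the nonempty half would do. This step only requires reading the tables, and it is the same input already used for Corollary \ref{corevd}. So I expect no new difficulty here, only care in checking that the chosen type satisfies both conditions at once.

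For the elliptic curve, real elliptic curves with $E(\Rb)$ having two components form a one-parameter family, for instance $y^2=x(x-1)(x-\lambda)$ with real $\lambda>1$. The very general condition excludes only countably many values of the parameter, since it is a countable union of proper closed conditions on the $j$-invariant. These curves are therefore dense in an interval of real parameters, so such an $E$ exists, and the case $d=3$ follows. The case $d=2$ needs no elliptic curve: take $X=S$.

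The only point requiring attention is that ``very general'' in \cite[Theorem~0.1]{benoistOttemFailureIHC2020} is a condition on complex elliptic curves. One must check that it can be met by a real curve in the disconnected real family. Since that family is Zariski dense in the complex moduli line, and the excluded set is a countable union of proper subvarieties (hence countable), this holds.
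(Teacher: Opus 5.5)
Your proposal is correct and matches the paper's argument. The paper deduces the corollary from the preceding theorem, taking an Enriques surface $S$ with one half empty and a component of odd Euler characteristic from the Degtyarev--Itenberg--Kharlamov tables (as in Corollary~\ref{corevd}) and setting $X:=S\times E^{d-2}$; your extra check that a very general $E$ with $E(\Rb)$ disconnected exists (a countable exclusion, for fixed $S$, inside an uncountable real family) makes explicit a point the paper leaves implicit.
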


Corollary \ref{corod=23} cannot hold for $d=1$ (see Corollary \ref{corod=1}). We do not know how to prove it for~$d\geq 4$ (see Question \ref{qnotsurj}\,\ref{qi}).

\subsubsection{Complete intersections of two quadrics}

\begin{prop}
\label{propciquadrics}
Let $X$ be a smooth complete intersection of two quadrics of dimension $d$ over~$\Rb$ such that $X(\Rb)$ is not connected. If $d\in \{1,2,3\}$, then $\oev^{d-1}_{\nr,X}$ is not surjective. If~$d\geq 4$, then $\ev^{d-1}_{\nr,X}$ is surjective.
\end{prop}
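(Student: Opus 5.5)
We prove the two assertions separately.

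\emph{Nonsurjectivity for $d\in\{1,2,3\}$.} By Corollary \ref{corocubic}, $\oev^{d-1}_X$ is not surjective. Since $d-1\leq 2$, Corollary \ref{corpetitdegre} shows that $\oev^{d-1}_{\nr,X}$ and $\oev^{d-1}_X$ have the same image. Hence $\oev^{d-1}_{\nr,X}$ is not surjective either. (For $d=3$ this is exactly the degree~$2$ case of Corollary \ref{corpetitdegre}.) No more input is needed.

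\emph{Surjectivity for $d\geq 4$.} We apply Corollary \ref{cortriangle2}\,\ref{coroiii}. We must check two things: that $\Hr^{d+1}(X(\Cb),\Zb)[2]=0$, and that $\Hr^d(X(\Cb),\Zb)$ has coniveau $\geq 2$.

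The cohomology of a smooth complete intersection $X\subset\Pb^{d+2}_{\Cb}$ is torsion free. Away from the middle degree, it agrees with that of projective space, by the weak Lefschetz theorem (Proposition \ref{prop:weak_lefschetz}, or its complex version) combined with Poincaré duality. So $\Hr^{d+1}(X(\Cb),\Zb)$ is $0$ or $\Zb$, and in either case has no $2$-torsion.

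The main point is the coniveau of the middle cohomology $\Hr^d(X(\Cb),\Zb)$. We split according to the parity of $d$.

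If $d$ is odd, $\Hr^d(X(\Cb),\Zb)$ is the lattice $\Hr^1$ of the associated hyperelliptic curve $C$ (Reid's theorem). The classes are obtained from the family of linear subspaces: the Fano variety of $\Pb^{(d-1)/2}$'s in $X$ is isomorphic to the Jacobian of $C$, and the incidence correspondence induces an isomorphism $\Hr^1(\mathrm{Jac}\,C,\Zb)\simeq\Hr^d(X(\Cb),\Zb)$. Such a cylinder map sends classes to classes supported on the union of the linear spaces parametrized by a curve in the Fano variety. That union has dimension $(d-1)/2+1=(d+1)/2$, hence codimension $(d-1)/2\geq 2$ when $d\geq 5$. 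This gives coniveau $\geq 2$ integrally, provided the cylinder map is surjective integrally. I expect this to be available from Reid's or Donagi's work, or from a degeneration argument.

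If $d$ is even, $\Hr^d(X(\Cb),\Zb)$ is generated by classes of algebraic cycles of codimension $d/2\geq 2$. These are the $(d/2)$-planes together with the class $h^{d/2}$. Cycle classes of codimension $c$ have coniveau $\geq c$, so we again get coniveau $\geq 2$.

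The step I expect to be the main obstacle is the integral statement in the odd case when $d\geq 5$, together with the integral generation by linear spaces in the even case. If the needed integral statements are not available in the literature, I would first try to show coniveau $\geq 2$ directly. The idea is that $X$ is covered by lines through a general point, or, for $d\geq 4$, that $X$ is rational with a codimension-$\geq 2$ indeterminacy description. One would then push the cohomology from a blow-up of $\Pb^d$ along a smooth center of dimension $d-2$, namely a complete intersection in $\Pb^{d-2}$-like position. The middle cohomology then comes from the middle cohomology of the center via a Gysin map from a subvariety of codimension $\geq 2$, which is coniveau $\geq 2$ integrally. Once both hypotheses of Corollary \ref{cortriangle2}\,\ref{coroiii} are verified, surjectivity of $\ev^{d-1}_{\nr,X}$ follows.
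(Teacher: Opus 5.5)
Your proposal is correct and follows the paper's proof. For $d\leq 3$ you combine Corollary \ref{corocubic} (i.e.\ Proposition \ref{prop:image_global_sign_hypersurface}) with Corollary \ref{corpetitdegre}, and for $d\geq 4$ you verify the hypotheses of Corollary \ref{cortriangle2}\,\ref{coroiii} using weak Lefschetz, Poincar\'e duality and Reid's coniveau results. The integral statements you were unsure about are exactly what the paper cites from Reid's thesis: Corollary~3.15 for $d$ even, and Theorem~4.14\,(b) for $d$ odd, where the cylinder homomorphism $\Hr^1(C(\Cb),\Zb)\isoto\Hr^d(X(\Cb),\Zb)$ is induced by a correspondence of dimension $\frac{d+1}{2}$ over the curve $C$, not over the whole Fano variety. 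So no fallback via rationality is needed.
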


\begin{proof}
Suppose first that $d\in \{1,2,3\}$. In view of Corollary \ref{corpetitdegre}, it suffices to show that $\oev^{d-1}_{X}$ is not surjective, which is a consequence of Proposition \ref{prop:image_global_sign_hypersurface}.

When $d\geq 4$, we verify the two hypotheses of Corollary \ref{cortriangle2}\,\ref{coroiii}. First, one has~$\Hr^{d+1}(X(\Cb),\Zb)[2]=0$ by the weak Lefschetz theorem and Poincar\'e duality. Second, the cohomology group $\Hr^d(X(\Cb),\Zb)$ has coniveau $\frac{d}{2}\geq 2$ if $d$ is even by \cite[Corollary 3.15]{reidCompleteIntersectionQuadrics1972} and coniveau $\frac{d-1}{2}\geq 2$ if $d$ is odd by \cite[Theorem~4.14\,(b) p.\,73]{reidCompleteIntersectionQuadrics1972}. In more detail, when $d$ is odd, Reid constructs a smooth projective curve $C$ over $\Cb$ and an isomorphism~$\Hr^1(C(\Cb),\Zb)\isoto \Hr^{d}(X(\Cb),\Zb)$ (called \emph{cylinder homomorphism}) induced by an algebraic correspondence. Since this correspondence is given by a cycle of dimension $\frac{d+1}{2}$, the image of this isomorphism has coniveau~$\frac{d-1}{2}\geq 2$.
\end{proof}

\subsubsection{Cubic hypersurfaces}

\begin{prop}
\label{propcubic}
Let $X$ be a smooth cubic hypersurface of dimension $d$ over~$\Rb$ such that~$X(\Rb)$ is not connected. If $d\in\{1,2,3\}$, or if $d=4$ and $X$ is very general,
then~$\oev^{d-1}_{\nr,X}$ is not surjective. If~$d\geq 5$, then $\ev^{d-1}_{\nr,X}$ is surjective.
\end{prop}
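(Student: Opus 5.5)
The proof splits into three regimes, following the pattern of Proposition \ref{propciquadrics}.

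\emph{Case $d\in\{1,2,3\}$.} Here $d-1\leq 2$, so Corollary \ref{corpetitdegre} shows that $\oev^{d-1}_{\nr,X}$ and $\oev^{d-1}_X$ have the same image. Proposition \ref{prop:image_global_sign_hypersurface}, or equivalently Corollary \ref{corocubic}, shows that this image is generated by $1$. Since $X(\Rb)$ is disconnected, $\oev^{d-1}_{\nr,X}$ is therefore not surjective.

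\emph{Case $d=4$, $X$ very general.} By Proposition \ref{prop:image_global_sign_hypersurface}, the image of $\oev^3_X$ is generated by $1$. It thus suffices to show that $\oev^3_X$ and $\oev^3_{\nr,X}$ have the same image, which we obtain from Corollary \ref{corth3bis}. We must check that every Hodge class in $\Hr^4(X(\Cb),\Zb)$ lifts to $\ker(\ev^4_X)$.

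For a very general cubic fourfold, the Hodge classes are exactly the integer multiples of $h^2$, where $h$ is the hyperplane class. Indeed, the integral Hodge lattice $\Hr^{2,2}\cap\Hr^4(X(\Cb),\Zb)$ has rank one by Noether--Lefschetz type results (Voisin, Hassett), and it is generated by $h^2$ because the primitive lattice is unimodular-complementary (even with $h^2\cdot h^2=3$). Care is needed with the meaning of ``very general'' over $\Rb$: real points are Zariski dense in the real parameter space, so the countable union of proper Noether--Lefschetz loci misses very general real cubics.

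Now $h^2$ is the class of a real codimension-$2$ linear section $L$, so it lifts to the equivariant cycle class $\gamma=\cl(L)\in\Hr^4_G(X(\Cb),\Zb(2))=\Hr^4_G(X(\Cb),\Zb(4))$. Algebraic classes vanish at real points, so $\gamma\in\ker(\ev^4_X)$. This step is the one I expect to be the main obstacle. It requires the precise Hodge-theoretic statement for very general cubic fourfolds, including integrality (not just rationality) of the generator $h^2$, and the argument that being very general over $\Rb$ is meaningful. The rest is formal.

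\emph{Case $d\geq5$.} We apply Corollary \ref{cortriangle2}\,\ref{coroiii}. First, $\Hr^{d+1}(X(\Cb),\Zb)[2]=0$ follows from the weak Lefschetz theorem and Poincar\'e duality, since the cohomology of a smooth hypersurface is torsion free. Second, we need $\Hr^d(X(\Cb),\Zb)$ to have coniveau $\geq2$. Only the primitive part matters when $d$ is even, since $h^{d/2}$ is algebraic of codimension $d/2\geq 3$. Here I would use the Fano variety of lines $F$ and the cylinder map $\Hr^{d-2}(F,\Zb)\to\Hr^d(X(\Cb),\Zb)$ given by the universal line $P\subset F\times X$. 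By results of Shimada, this correspondence is surjective on integral cohomology in degree $d$ for cubic hypersurfaces. The classes it produces are pushforwards from the $(d-1)$-dimensional family of lines, and more precisely they are supported on the image of $P$ restricted to cycles in $F$. This gives coniveau at least $(d-2)/2-\ldots$, and I would verify the bound via the fact that $\Hr^{d-2}(F)$ itself has coniveau $\geq1$ by Lefschetz on $F$ (affine complement). This would give coniveau $\geq 2$ for $d\geq5$, and I would double-check the numerics here. Granting both hypotheses, Corollary \ref{cortriangle2}\,\ref{coroiii} shows that $\ev^{d-1}_{\nr,X}$ is surjective.
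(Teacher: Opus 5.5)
Your cases $d\in\{1,2,3\}$ and $d=4$ follow the paper's proof. For $d=4$ this means Corollary \ref{corth3bis}, with integrality of the generator $h^2$ coming from $h^2\cdot h^2=3$, and $h^2$ lifting to an algebraic class in $\ker(\ev^4_X)$. For $d\geq 5$ your outer structure (Corollary \ref{cortriangle2}\,\ref{coroiii}, plus torsion-freeness via weak Lefschetz and Poincar\'e duality) is also the paper's.

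The gap is the other hypothesis, that $\Hr^d(X(\Cb),\Zb)$ has coniveau $\geq 2$. The family of lines cannot give this. The Fano variety of lines $F$ has dimension $2d-4$, not $d-1$. The universal line $P$, of dimension $2d-3$, induces $[P]_*:\Hr^k(F)\to\Hr^{k-2d+6}(X(\Cb))$. So the cylinder map into $\Hr^d(X(\Cb),\Zb)$ starts from $\Hr^{3d-6}(F,\Zb)\simeq\Hr_{d-2}(F,\Zb)$; the group $\Hr^{d-2}(F)$ is instead the target of the transposed (Abel--Jacobi) map. By Lefschetz, $\Hr_{d-2}(F,\Zb)$ is carried by a $(d-2)$-dimensional linear section $Z\subset F$. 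The lines parametrized by $Z$ sweep out a subvariety of $X$ of dimension at most $d-1$, which gives coniveau $\geq 1$ only. Coniveau $\geq 2$ would require the classes to be carried by $(d-3)$-dimensional subvarieties of $F$, and Lefschetz does not provide this. As a sanity check, your mechanism (Lefschetz on $F$ plus sweeping by lines) applies verbatim to $d=3,4$. There coniveau $\geq 2$ is false, since $h^{2,1}\neq 0$, respectively $h^{3,1}=1$.

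The missing idea is to sweep by linear subspaces of dimension $k\geq 2$. For instance, when $d=5$ the planes in $X$ form a surface whose $\Hr_1$ is carried by curves, and the corresponding planes sweep out threefolds, i.e.\ codimension $2$. This is what the paper does in Lemma \ref{lemconiveaucubic}. It uses Lewis's integral cylinder homomorphism $\Hr^{d-2k}(\Omega_X,\Zb)\to\Hr^d(X(\Cb),\Zb)$ with $k=\lfloor\sqrt{6d+37}-6\rfloor$, which is $\geq 2$ exactly when $d\geq 5$. For general $X$, its image (together with $h^{d/2}$ if $d$ is even) generates $\Hr^d(X(\Cb),\Zb)$. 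A specialization argument as in Fu--Tian then extends this to every smooth cubic. Your proposal would also need this step, since the statement concerns all smooth $X$.

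Finally, for $d$ even, $\Zb h^{d/2}\oplus\Hr^d_{\mathrm{prim}}$ has index $3$ in $\Hr^d(X(\Cb),\Zb)$. So ``only the primitive part matters'' needs a word of justification integrally. It is harmless here, because only $2$-torsion classes enter the proof of Corollary \ref{cortriangle2}\,\ref{coroiii}.
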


\begin{proof}
For  $d\in \{1,2,3\}$, combine Proposition \ref{prop:image_global_sign_hypersurface} and Corollary \ref{corpetitdegre}.

Suppose that $d=4$ and that $X$ is very general. By Proposition~\ref{prop:image_global_sign_hypersurface}, the morphism~$\oev^3_{X}$ is not surjective. To conclude the proof in this case, it therefore suffices to verify the hypothesis of Corollary \ref{corth3bis}. Let $\pi:X_{\Cb}\to X$ be the base change morphism, let $h\in \Hr^2_G(X(\Cb),\Zb(1))$ be the hyperplane class in $G$\nobreakdash-equivariant cohomology, and identify $\Zb(2)$ and $\Zb(4)$. Let $\alpha\in \Hr^4(X(\Cb),\Zb)$ be a Hodge class. Since~$X$ is very general, all Hodge classes in $\Hr^4(X(\Cb),\Zb)$ are multiples of $(\pi^*h)^2$ by \cite[Chap.\,1, Proof of Corollary 2.12]{huybrechtsGeometryCubicHypersurfaces2023} and primitivity of $(\pi^*h)^2$ in $\Hr^4(X(\Cb),\Zb)$. It follows that $\alpha=\ell (\pi^*h)^2=\pi^*(\ell h^2)$ for some $\ell\in\Zb$. This concludes because, the class $h^2$ being algebraic, one has $\ev^4_X(h^2)=0$.

For $d\geq 5$, apply Corollary \ref{cortriangle2}\,\ref{coroiii}, whose hypotheses are satisfied by the weak Lefschetz theorem and Poincar\'e duality, and by Lemma \ref{lemconiveaucubic} below.
\end{proof}

The following lemma is easy for $d\leq 4$ and appears in \cite[Proposition 10\,(2)]{fu2CyclesHypersurfacesCubiques2019} for $d=5$. We deduce it in general from the work of Lewis \cite{lewisCylinderHomomorphisms1993}.

\begin{lem}
\label{lemconiveaucubic}
Let $X$ be a smooth cubic hypersurface of dimension $d\geq 1$ over $\Cb$. Then $\Hr^d(X(\Cb),\Zb)$ has coniveau $\geq \lfloor\sqrt{6d+37}-6\rfloor$.
\end{lem}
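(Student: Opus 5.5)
The plan is to use Lewis's cylinder homomorphism for cubic hypersurfaces, which realizes the middle cohomology of $X$ via the family of linear subspaces it contains. Fix an integer $r\geq 1$ and let $F=F_r(X)$ be the Fano variety of $r$-planes in $X$, with universal family $P\subset F\times X$ and projections $p:P\to F$, $q:P\to X$.

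For $X$ general, $F$ is smooth of the expected dimension
\[
f:=(r+1)(d+1-r)-\binom{r+3}{3},
\]
and nonempty precisely when $f\geq 0$. The cylinder map is
\[
\Phi=q_*p^*:\Hr^{2f-d}(F(\Cb),\Zb)\to\Hr^{d}(X(\Cb),\Zb),
\]
a correspondence given by a cycle of dimension $f+r$. The key input from \cite{lewisCylinderHomomorphisms1993} is that $\Phi$ is surjective onto the primitive part (hence onto $\Hr^d$ when $d$ is odd) as soon as $f\geq 0$ and a suitable numerical inequality between $d$ and $r$ holds. Granting this, every class in the image is supported on $q(P)$. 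The source degree is $2f-d\geq 0$, and $p^*$ followed by $q_*$ raises degree by $2r+(d-2f)$.

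A cleaner way to see coniveau is to use Gysin maps. A class in $\Hr^{2f-d}(F)$ pulled back to $P$ and pushed forward lands in classes supported on $q(P)$, which has dimension at most $f+r$. Hence the image has coniveau at least $d-(f+r)$, provided we can use the Gysin pushforward from a resolution of $q(P)$, which follows since $q$ is proper. Since $\dim q(P)\leq d$, a better bound is obtained by choosing $r$ so that $f+r$ is small while the surjectivity still holds. So one picks the largest $r$ with $f\geq 0$ and Lewis's condition satisfied, and the coniveau is then $\geq d-f-r$.

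For even $d$, the class $h^{d/2}$ is algebraic of coniveau $d/2$, so it suffices to treat the primitive part. Coniveau of the lattice spanned by these two pieces is at least the minimum of the two, and torsion issues are handled because $\Hr^d(X(\Cb),\Zb)$ is torsion free by weak Lefschetz. To pass from general $X$ to all smooth $X$, I would use a specialization/family argument: coniveau $\geq c$ of the whole cohomology group is a closed condition in smooth families, since relative Hilbert schemes let supports specialize. Alternatively, one can note that Lewis's statements hold for every smooth cubic.

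The main obstacle is the arithmetic. One must check that the optimal $r$ gives $d-f-r\geq\lfloor\sqrt{6d+37}-6\rfloor$. I expect the relevant regime to be where $f$ is of order $0$ to $r$, i.e.\ $(r+1)(d+1-r)\approx\binom{r+3}{3}$, so $d\approx r^2/6$, which yields $r\approx\sqrt{6d}$. The constants $37$ and $-6$ should come out of solving the exact quadratic inequality in $r$ and rounding. I would carry out this computation carefully, checking small $d$ separately.
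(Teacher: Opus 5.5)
Your framework is the paper's: Lewis's cylinder homomorphisms, a separate treatment of the hyperplane class when $d$ is even, and specialization from general $X$. The central step fails as you set it up, however. You use the \emph{whole} Fano variety $F=F_r(X)$ and bound the coniveau by $d-\dim q(P)\geq d-f-r$. (Incidentally, for $q_*p^*$ to land in $\Hr^d$ its source must be $\Hr^{2f+2r-d}(F)$, not $\Hr^{2f-d}(F)$.)

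This bound is too weak. Writing $f_r$ for your $f$, one computes $f_r-(d-2r)=\frac{r}{6}\bigl(6d+37-(r+6)^2\bigr)$. Hence for $1\leq r\leq k:=\lfloor\sqrt{6d+37}-6\rfloor$ one has $f_r\geq d-2r$, and so $d-f_r-r\leq r\leq k$. Equality holds only when $6d+37=(k+6)^2$. For $r>k$, Lewis's hypothesis fails and you have no surjectivity to invoke.

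Concretely, for a cubic threefold the lines cover $X$, so your bound is $0$ while the lemma asserts $1$. For $d=5$, the Fano surface of planes gives $5-2-2=1$ instead of $2$. Your heuristic regime $f\approx 0$, $r\approx\sqrt{6d}$ is also inconsistent: for large $d$ it would give coniveau $\approx d-\sqrt{6d}>d/2$, and there $2f+2r-d<0$, so the cylinder map has zero source.

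The missing idea is to pass to a smooth subfamily $\Omega_X\subset F_k(X)$ of dimension \emph{exactly} $d-2k$. For $X$ general, a general complete intersection of hyperplane sections of $F_k(X)$ works, and this is what Lewis's construction provides. The $k$-planes parametrized by $\Omega_X$ sweep out a subvariety of dimension $\leq d-k$. So the image of $\Phi_*:\Hr^{d-2k}(\Omega_X(\Cb),\Zb)\to\Hr^d(X(\Cb),\Zb)$ has coniveau $\geq k$. Lewis shows this image is all of $\Hr^d(X(\Cb),\Zb)$ for $d$ odd, and generates it together with a power of the hyperplane class for $d$ even.

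Such an $\Omega_X$ exists precisely when $f_k\geq d-2k$, which by the formula above means $(k+6)^2\leq 6d+37$. This, not $f\approx 0$, is where $\lfloor\sqrt{6d+37}-6\rfloor$ comes from. You cannot recover this by staying on the full $F$ and using hard Lefschetz to move classes of $\Hr^{2f+2k-d}(F)$ onto such a section. That only works with $\Qb$-coefficients, and the lemma concerns integral classes.

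Finally, your alternative claim that Lewis's results hold for every smooth cubic is unjustified, since $F_k(X)$ may be singular or of excess dimension for special $X$. The specialization step is genuinely needed.
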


\begin{proof}
Assume first that $X$ is general. Set $k:= \lfloor\sqrt{6d+37}-6\rfloor$. This is the largest integer $k$ satisfying 
\cite[(2.3)]{lewisCylinderHomomorphisms1993} (note that in \cite{lewisCylinderHomomorphisms1993}, the integers $n$ and $d$ denote the dimension and the degree of $X$). We apply the results of \cite{lewisCylinderHomomorphisms1993} with this value of $k$ (this is legitimate as indicated in the note added in proof in \cite[p.\,220]{lewisCylinderHomomorphisms1993}).

Lewis constructs a smooth projective variety $\Omega_X$ of dimension $d-2k$ over~$\Cb$ and a cylinder homomorphism $\Phi_*:\Hr^{d-2k}(\Omega_X(\Cb),\Zb)\to \Hr^d(X(\Cb),\Zb)$ induced by an algebraic correspondence, so the image of $\Phi_*$ has coniveau $\geq k$ (see~\mbox{\cite[(3.7)]{lewisCylinderHomomorphisms1993}}). The lemma follows since $\Hr^d(X(\Cb),\Zb)$ is equal to $\im(\Phi_*)$ if $d$ is odd, and is generated by $\im(\Phi_*)$ and a power of the hyperplane class if $d$ is even (see \cite[p.\,213]{lewisCylinderHomomorphisms1993}).

The result for arbitrary $X$ follows from the case of a general $X$ by a specialization argument (exactly as in the second part of \cite[Proof of Proposition 10]{fu2CyclesHypersurfacesCubiques2019}).
\end{proof}

\subsubsection{Abelian varieties}

\begin{prop}
\label{propva2}
Let $X$ be an abelian variety of dimension $d$ over $\Rb$ such that~$X(\Rb)$ has $2^d$ connected components. If $d\in\{1,2,3\}$, or if $d=4$ and $X$ is principally polarized and very general, then $\oev^{d-1}_{\nr,X}$ is not surjective.
\end{prop}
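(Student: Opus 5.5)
The plan mirrors the proof of Proposition \ref{propcubic}. By Proposition \ref{propva}, the morphism $\oev^{d-1}_X$ is not surjective. For $d\in\{1,2,3\}$ one has $d-1\leq 2$, so Corollary \ref{corpetitdegre} shows that $\oev^{d-1}_{\nr,X}$ and $\oev^{d-1}_X$ have the same image. Hence $\oev^{d-1}_{\nr,X}$ is not surjective in these cases.

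For $d=4$, I will apply Corollary \ref{corth3bis}, which reduces the statement to showing that every Hodge class in $\Hr^4(X(\Cb),\Zb)$ lifts to $\ker(\ev^4_X)$. Throughout I identify $\Zb(2)$ and $\Zb(4)$. Let $\theta\in\Hr^2_G(X(\Cb),\Zb(1))$ be the equivariant cycle class of a principal polarization defined over $\Rb$, i.e. the class of a real divisor. Such a divisor exists after possibly replacing $X$ by a variety with the same $G$-equivariant topology, and we may assume this by the very general hypothesis. Let $\pi:X_{\Cb}\to X$ be the base change morphism.

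The key input is the Hodge-theoretic statement that, for a very general principally polarized abelian fourfold, the Hodge classes in $\Hr^4(X(\Cb),\Zb)$ are exactly the integral classes in $\Qb\cdot(\pi^*\theta)^2$. The rational statement holds because the Mumford--Tate group is the full symplectic group, whose only invariants in $\extp^4 \Hr^1$ are the multiples of $\theta^2$. For the integral statement, note that $\theta^2/2$ is integral: it is the minimal-class multiple $2\,\theta^2/2!$. So the Hodge classes are the integer multiples of $\theta^2/2$, and I need to lift $\theta^2/2$ itself.

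This is the main obstacle. The class $(\pi^*\theta)^2/2=\pi^*\theta^{\,2}/2$ is represented by $2$ times the minimal class $\theta^2/2$. The minimal class $\theta^2/2$ is algebraic on a Jacobian-like locus, but for a very general ppav of dimension $4$ it is the class of a real algebraic cycle only if the integral Hodge conjecture holds there. I would try to use that the minimal class $\theta^{g-1}/(g-1)!$ is known to be algebraic in degree $2g-2$ for $g=4$ is the curve class, not the codimension-$2$ class. So instead I would argue as follows. Take a Hodge class $\alpha=m\,\theta^2/2$. If $m$ is even, it equals $\pi^*((m/2)\theta^2)$, and $\theta^2$ is algebraic, so its $\ev^4_X$ vanishes. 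If $m$ is odd, it suffices to produce a lift of $\theta^2/2$ with vanishing evaluation. For this I would examine $\Hr^4(X(\Cb),\Zb)^G$ via the $G$-equivariant model $(\Lambda\otimes\Rb)/\Lambda$ with $X=E^4$-type topology, as in the proof of Proposition \ref{propva}. There I would try to lift $\theta^2/2=\sum_{i<j}p_{ij}^*[\mathrm{pt}]$ (in a product-type symplectic basis) to $\sum_{i<j}p_i^*\eta\smile p_j^*\eta$, where $\eta\in\Hr^2_G(E(\Cb),\Zb(1))$ is the class of a real point. This lift is a sum of products of algebraic classes, so its evaluation at every real point vanishes, since classes of real cycles are killed by restriction to points. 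This verifies the hypothesis of Corollary \ref{corth3bis}. Together with the non-surjectivity of $\oev^3_X$, it shows that $\oev^3_{\nr,X}$ is not surjective.
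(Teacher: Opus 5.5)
Your overall route is the paper's: Proposition~\ref{propva} with Corollary~\ref{corpetitdegre} for $d\leq 3$, and for $d=4$ Corollary~\ref{corth3bis}, Hodge classes equal to $\Zb\cdot(\pi^*\theta)^2/2$, and the explicit lift $\sum_{i<j}p_i^*\eta\smile p_j^*\eta$ on an $E^4$-model. That lift is exactly the class used in Lemma~\ref{lemvapp}. Your Mumford--Tate argument replaces the paper's appeal to Mattuck's theorem. You still need, as the paper notes, that $(\pi^*\theta)^2/2$ is primitive in $\Hr^4(X(\Cb),\Zb)$.

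There is, however, a genuine gap in transferring the lift from $E^4$ to $X$. The proof of Proposition~\ref{propva} only gives some $G$-equivariant diffeomorphism $\phi:X(\Cb)\to E^4(\Cb)$. Nothing ensures that $\phi$ carries $\pi^*\theta$ to the product polarization $\theta_0=\sum_i p_i^*[\mathrm{pt}]$. Without this, $\phi^*$ of your class lifts $\phi^*(\theta_0^2/2)$, which need not be $(\pi^*\theta)^2/2$. Your phrase ``in a product-type symplectic basis'' presupposes exactly the missing fact: one basis of $\Lambda=\Hr_1(X(\Cb),\Zb)$ adapted both to the $G$-action and to the Riemann form $E$ of $\theta$. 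The paper gets this from the connectedness of the moduli space of real principally polarized abelian varieties with $2^d$ real components (Gross--Harris, Silhol), which deforms $(X,\theta)$ to $E^d$ with the product polarization.

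You can also prove it directly. Since $X(\Rb)$ has $2^d$ components, $\Lambda=\Lambda^+\oplus\Lambda^-$ is the sum of its eigenlattices. Anti-invariance $E(\sigma x,\sigma y)=-E(x,y)$ forces $\Lambda^{\pm}$ to be isotropic. Unimodularity of $E$ then makes the induced pairing $\Lambda^+\times\Lambda^-\to\Zb$ perfect. Dual bases give a linear $G$-equivariant diffeomorphism with $\phi^*\theta_0=\pi^*\theta$. Vanishing at real points transports because evaluation is topological; indeed $\eta$ restricts into $\Hr^2(G,\Zb(1))=0$.

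Several intermediate claims are wrong but inessential:
\begin{enumerate}
\item $(\pi^*\theta)^2/2$ is itself the minimal class, not twice it.
\item The parity split on $m$ and the detour through the integral Hodge conjecture are unnecessary. Lifting $(\pi^*\theta)^2/2$ to $\ker(\ev^4_X)$ handles every Hodge class at once.
\item You cannot replace the given very general $X$ by another variety to make $\theta$ the class of a real divisor. You do not need this, since only $\pi^*\theta\in\Hr^2(X(\Cb),\Zb)$ enters the argument.
\end{enumerate}
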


\begin{proof}
By Proposition \ref{propva}, the morphism $\oev^{d-1}_{X}$ is not surjective. To conclude, we show that the images of $\oev^{d-1}_{X}$ and $\oev^{d-1}_{\nr,X}$ coincide. When~${d\in\{1,2,3\}}$, this follows from Corollary \ref{corpetitdegre}. Assuming from now on that $d=4$ and that~$X$ is principally polarized and very general, it remains to verify the hypothesis of Corollary~\ref{corth3bis}.

Let $\pi:X_{\Cb}\to X$ be the base change morphism, let $\theta\in \Hr^2_G(X(\Cb),\Zb(1))$ be the class of the principal polarization in $G$\nobreakdash-equivariant cohomology, and identify $\Zb(2)$ and~$\Zb(4)$. Fix a Hodge class $\alpha\in \Hr^4(X(\Cb),\Zb)$. As the abelian variety~$X$ is principally polarized and very general, Mattuck's theorem (see \cite[Theorem~17.4.1]{birkenhakeLangeComplexAbelianVarieties2004}) and the fact that~$\frac{(\pi^*\theta)^2}{2}\in\Hr^4(X(\Cb),\Zb)$ is primitive imply that $\alpha=\ell\frac{(\pi^*\theta)^2}{2}$ for some $\ell\in\Zb$. By Lemma \ref{lemvapp} below applied with $d=4$ and $c=2$, there exists~$\eta\in\ker(\ev^4_X)$ such that $\pi^*\eta=\frac{(\pi^*\theta)^2}{2}$. It follows that $\alpha=\pi^*(\ell\eta)$, which concludes the proof.
\end{proof}

\begin{lem}
\label{lemvapp}
Let $X$ be a principally polarized abelian variety of dimension~$d$ over~$\Rb$ such that $X(\Rb)$ has $2^d$ connected components. Let $\theta\in \Hr^2_G(X(\Cb),\Zb(1))$ be the class of the principal polarization. Fix $1\leq c\leq d$. There exists $\eta\in\Hr^{2c}_G(X(\Cb),\Zb(c))$ vanishing in restriction to each real point of $X$ such that the image of $\theta^c-c!\,\eta$ in~$\Hr^{2c}(X(\Cb),\Zb)$~vanishes.
\end{lem}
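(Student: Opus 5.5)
Since $X(\Rb)$ has $2^d$ connected components, the argument in the proof of Proposition~\ref{propva} lets us take $X(\Cb)$ to be $G$-equivariantly diffeomorphic to $E^d$, where $E$ is an elliptic curve with $E(\Rb)$ having two components. The plan is to build $\eta$ explicitly from classes pulled back from the factors and then compare with $\theta$. Everything in the statement is topological except the class $\theta$ itself, and we only need its image in $\Hr^2_G(X(\Cb),\Zb(1))$.

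\emph{Step 1: describe $\theta$.} We use the lattice description $\Lambda=\Hr_1(X(\Cb),\Zb)\simeq(\Zb\oplus\Zb(1))^{\oplus d}$. In this description the principal polarization is a unimodular alternating form on $\Lambda$ that is compatible with conjugation. Compatibility forces it to pair the $\Zb$-part with the $\Zb(1)$-part. After a $G$-equivariant change of basis we may take it to be the standard form $\sum_i x_i\wedge y_i$, with $x_i\in\Hr^1(X(\Cb),\Zb)$ and $y_i\in\Hr^1(X(\Cb),\Zb(1))$ coming from the $i$-th factor. Hence $\pi^*\theta=\sum_i p_i^*\omega$, where $\omega\in\Hr^2(E(\Cb),\Zb(1))$ is the point class. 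On $E$ we have the equivariant class $[\pt_i]\in \Hr^2_G(E(\Cb),\Zb(1))$ of a real point. Choosing real points $e_i$, the class $\theta':=\sum_i p_i^*[e_i]$ has the same image as $\theta$ in $\Hr^2(X(\Cb),\Zb)$.

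\emph{Step 2: construct $\eta$.} In $\Hr^{2c}(X(\Cb),\Zb)$ we have $(\pi^*\theta)^c=c!\sum_{|I|=c}\prod_{i\in I}p_i^*\omega$. So we would set
\[
\eta:=\sum_{|I|=c}\ \prod_{i\in I}p_i^*\,[\mathrm{pt}_i]\in\Hr^{2c}_G(X(\Cb),\Zb(c)).
\]
The factors must be chosen so that each product restricts to zero at every real point. We cannot simply take equivariant classes of real points, since such a class restricts nontrivially at that point. Instead we take, on $E$, an algebraic divisor class of a real point, or better the class of a \emph{pair of complex-conjugate points}, or else the difference $[e]-[e']$ corrected appropriately. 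The class $\kappa\in\Hr^2_G(E(\Cb),\Zb(1))$ of a pair of conjugate points has image $2\omega$, which is too big. So we would instead use the fact that $\ev^2_E$ applied to a real point class $[e]$ is the indicator of the component containing $e$. We then look for $\kappa\in\Hr^2_G(E(\Cb),\Zb(1))$ with image $\omega$ and $\ev^2_E(\kappa)=0$. This amounts to asking whether the kernel of $\Hr^2_G(E(\Cb),\Zb(1))\to\Hr^2(E(\Cb),\Zb)$, which by \eqref{rcles} is the image of $\Hr^1_G(E(\Cb),\Zb)$ under cup product with the generator of $\Hr^1(G,\Zb(1))$, contains a class evaluating to $1$ on both components (or to the indicator of one component). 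A class $u\in\Hr^1_G(E(\Cb),\Zb)$ with $u$ nonzero on $\Hr^1(G,\Zb)=0$ gives nothing, so this route needs care.

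\emph{Step 2, refined.} We therefore expect the following fallback to be needed. Take $\kappa:=[e]+\tau$, where $\tau\in\Hr^2_G(E(\Cb),\Zb(1))$ comes from the real locus and restricts to the indicator of the component of $e$. The existence of $\tau$ would be established using Theorem~\ref{thm:image_of_total_restriction_to_the_real_locus} and the equivariant purity sequence $\Hr^0(E(\Rb),\Zb/2)\to\Hr^2_{G,E(\Rb)}\to\Hr^2_G$: classes supported on $E(\Rb)$ map to zero in $\Hr^2(E(\Cb),\Zb)$, and they can be given prescribed values in $\Hr^0(E(\Rb),\Zb/2)$. Then $\ev^2_E(\kappa)=0$ and $\kappa\mapsto\omega$. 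Products of such $\kappa$ pulled back from distinct factors evaluate to the product of the zero evaluations, so they vanish at real points, and the identity $\pi^*(\theta^c)=c!\,\pi^*\eta$ follows from Step~1. The main obstacle is Step~1, namely making sure that $\pi^*\theta$ really splits as $\sum_i p_i^*\omega$ in a $G$-compatible decomposition. This requires the classification of real unimodular alternating forms on $(\Zb\oplus\Zb(1))^d$.
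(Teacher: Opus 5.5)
There is a genuine error in Step 2, and it is what sends you on the detour. You claim the equivariant class $[e]$ of a real point restricts nontrivially at that point. This is false. The class $[e]$ lives in $\Hr^2_G(E(\Cb),\Zb(1))$, so its restriction to any real point lies in $\Hr^2(G,\Zb(1))=0$. You are conflating it with $\ev^2_E$, which is defined on the $\Zb(2)$-twist. In that twist $\omega$ does not even lift, because conjugation acts by $-1$ on $\Hr^2(E(\Cb),\Zb)$.

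For the same reason your refined Step 2 is ill-posed. No $\tau\in\Hr^2_G(E(\Cb),\Zb(1))$ can restrict to the indicator of a component, since every evaluation in this twist is zero. Moreover, the existence of $\tau$ is only asserted through a loose appeal to Theorem~\ref{thm:image_of_total_restriction_to_the_real_locus} and purity. As written, the vanishing of $\eta$ at real points is therefore not established.

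The missing observation closes the gap immediately and recovers the paper's proof. Restriction to a real point is a ring map, and each factor $p_i^*[e_i]$ restricts into $\Hr^2(G,\Zb(1))=0$. So your first candidate $\eta=\sum_{|I|=c}\prod_{i\in I}p_i^*[e_i]$ already vanishes at every real point; for $c$ odd the target $\Hr^{2c}(G,\Zb(c))$ is even zero. Taking every $e_i$ to be the origin, this $\eta$ is the equivariant cycle class of the union of the coordinate abelian subvarieties $E^{d-c}\subset E^d$. This is exactly the paper's $\eta$, and the paper phrases the vanishing as ``$\eta$ is algebraic''. Your expansion $(\pi^*\theta)^c=c!\,\pi^*\eta$, using $\omega^2=0$, is then the paper's ``direct computation''.

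Your Step 1, by contrast, is correct and is not the obstacle you fear; it is a genuinely different route from the paper's. Write $\psi$ for the Riemann form. Since $\pi^*\theta$ is $G$-invariant in $\Hr^2(X(\Cb),\Zb(1))$, one has $\psi(\sigma x,\sigma y)=-\psi(x,y)$. Hence the eigenlattices $\Lambda^{\pm}$ are isotropic and, by unimodularity, perfectly paired. Taking a basis of $\Lambda^+$ and the dual basis of $\Lambda^-$ gives a $G$-equivariant isometry with the lattice of $E^d$ carrying the product polarization. This yields a $G$-equivariant diffeomorphism sending $\pi^*\theta$ to $\sum_i p_i^*\omega$. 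That suffices, because only the image of $\theta$ in $\Hr^2(X(\Cb),\Zb)$ enters the statement. The paper instead invokes the connectedness of the moduli space of real principally polarized abelian varieties with $2^d$ real components. Your linear-algebra argument is more self-contained; the paper's is shorter given the references.
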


\begin{proof}
Let $E$ be an elliptic curve over $\Rb$ such that $E(\Rb)$ has $2$ connected components. By the connectedness of the parameter space for principally polarized abelian varieties $X$ of dimension $d$ over $\Rb$ such that $X(\Rb)$ has $2^d$ connected components (see \cite[Proposition 9.3]{grossHarrisRealAlgebraicCurves1981} or \cite[IV, Definition 4.4, Corollary 4.5, Theorem 4.6]{silholRealAlgebraicSurfaces1989}), and since the statement is purely topological, we may assume that $X=E^d$ and that the principal polarization is induced by principal polarizations of the factors.

Let $Z\subset X$ be the union of the $\binom{d}{d-c}$ abelian subvarieties of $X=E^d$ that are products of~$d-c$ of the factors. Let $\eta\in\Hr^{2c}_G(X(\Cb),\Zb(c))$ be the image of $Z$ by the $G$\nobreakdash-equivariant cycle class map (for which see \cite{krasnovEquivariantGrothendieckCohomology1994} or \cite[\S 1.6.1]{benoistIntegralHodgeConjecture2020}). The class~$\eta$ vanishes upon restriction to the real points of $X$ because it is algebraic, and the assertion concerning its image in $\Hr^{2c}(X(\Cb),\Zb)$ is a direct computation.
\end{proof}

 \section{\texorpdfstring{$\Ibf$}{I}-cohomology}

\subsection{Bounding the exponent of the cokernel of higher signature morphisms}
\label{parexponentbounds}

In this subsection, we prove Theorem \ref{thexpcokerintro} from the introduction.

\subsubsection{Canonical sections of evaluation morphisms}

We first introduce some notation. Fix~$j\geq 0$. Let~$X$ be a smooth algebraic variety over~$\Rb$ and let~$\iota :X(\Rb)\to X$ be the inclusion. The natural morphisms $\Hr^c(X,\iota_*\Lb)\to \Hr^c(X(\Rb),\Lb)$ are isomorphisms for all $c\geq 0$ and all local systems $\Lb$ on $X(\Rb)$ (see e.g.\,\cite[Proposition~2.26]{lerbetImageHigherSignature2026}).

The maps $\overline{\ev}_{U}^j$ for~${U\subset X}$ open assemble into a morphism $\overline{\ev}^j$ of presheaves on $X$, from the presheaf ${U\mapsto\Hr_G^j(U(\Cb),\Zb/2)}$ to the sheaf $\iota_*\Zb/2$ given by the assignment~$U\mapsto\Hr^0(U(\Rb),\Zb/2)$. Upon sheafification, this induces a homomorphism~${\overline{\ev}_\nr^j:\Hsc_X^j(\Zb/2)\to\iota_*\Zb/2}$ that recovers the map~$\overline{\ev}_{\nr,X}^j$ after taking global sections. We record our crucial observation as follows.

\begin{thm}
\label{thm:functorial_Krasnov}
Let $X$ be a smooth algebraic variety of dimension $d$ over $\Rb$. The homomorphism of sheaves $\overline{\ev}_\nr^d:\Hsc_X^d(\Zb/2)\to\iota_*\Zb/2$  has a canonical section.
\end{thm}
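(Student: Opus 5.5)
We build the section from the canonical section of $\oev^d_X$ constructed in \cite[Remark~1.6\,(ii)]{benoistIntegralHodgeConjecture2020}, used on every open subset at once. For each Zariski open $U\subset X$, the exact sequence of \cite[Proposition~1.3]{benoistIntegralHodgeConjecture2020} combined with equivariant purity \cite[(1.20)]{benoistIntegralHodgeConjecture2020} gives a morphism
$$s_U:\Hr^0(U(\Rb),\Zb/2)=\Hr^d_{G,U(\Rb)}(U(\Cb),\Zb/2)\to\Hr^d_G(U(\Cb),\Zb/2).$$
It is the pushforward (Gysin map) along the $G$-fixed locus $U(\Rb)$, which has real codimension $d$ in $U(\Cb)$. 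By the computation \eqref{rj1}, with $j=d$, one has $r^d_U(s_U(\sigma))=\sigma\smile w_U$. Its degree-$0$ component is $\sigma$, so $\oev^d_U\circ s_U=\Id$. This argument only uses the local structure near the fixed locus, so it works for $U$ smooth and not necessarily proper. That is the content of loc.\ cit.

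\textbf{Naturality.} The key step is to check that the $s_U$ commute with restriction to smaller opens $V\subset U$. They are built from cohomology with supports in the fixed locus, followed by the forget-supports map. The purity isomorphism $\Hr^0(U(\Rb),\Zb/2)\simeq\Hr^d_{G,U(\Rb)}(U(\Cb),\Zb/2)$ is a Thom isomorphism for the normal bundle $iT_{U(\Rb)}$, and it is local, hence compatible with open restriction. Forgetting supports is also compatible with restriction. So $(s_U)_U$ is a morphism of presheaves, from $U\mapsto\Hr^0(U(\Rb),\Zb/2)$, which is the sheaf $\iota_*\Zb/2$, to the presheaf $U\mapsto\Hr^d_G(U(\Cb),\Zb/2)$.

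\textbf{Sheafifying.} Compose with the sheafification map into $\Hsc^d_X(\Zb/2)$ to get a morphism of sheaves $s:\iota_*\Zb/2\to\Hsc^d_X(\Zb/2)$. Since $\oev^d_U\circ s_U=\Id$ at the presheaf level, and $\oev^d_\nr$ is the sheafification of the presheaf morphism $(\oev^d_U)_U$, we get $\oev^d_\nr\circ s=\Id$. Canonicity holds because nothing was chosen: the construction uses only purity and the canonical splitting of loc.\ cit.

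\textbf{Main obstacle.} The delicate point is to confirm that the canonical section of \cite[Remark~1.6\,(ii)]{benoistIntegralHodgeConjecture2020} is indeed this Gysin map and is natural for open immersions. I would first check this through the compatibility of the canonical splitting of \cite[Proposition~1.3]{benoistIntegralHodgeConjecture2020} with restriction. If that is unclear, the fallback is to restrict the purity isomorphism to $V\subset U$ by excision, since supports in $V(\Rb)$ are what matter. This gives naturality directly, because $s_U$ is the composite of the purity isomorphism and the forget-supports map, and never uses the splitting $\tau$.
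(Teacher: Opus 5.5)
Your proposal is correct and takes essentially the same route as the paper. Your map $s_U$ (the purity isomorphism followed by forgetting supports) is exactly the map sending a connected component $V$ of $U(\Rb)$ to its equivariant fundamental class $\overline{\sigma}_V$. The paper likewise gets $\oev^d_U\circ s_U=\Id$ on arbitrary, not necessarily proper, opens $U$ from \cite[Remark~1.6\,(ii)]{benoistIntegralHodgeConjecture2020}, reads off naturality in $U$ from the definition, and sheafifies. Your justification of naturality, via compatibility of purity and forget-supports with open restriction, simply spells out what the paper states in one line.
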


\begin{proof}
If $U$ is an open subset of $X$ and $V$ is a connected component of $U(\Rb)$, we let $\overline{\sigma}_V\in\Hr_G^d(U(\Cb),\Zb/2)$ be the equivariant fundamental class of $V$ \cite[p. 15]{benoistIntegralHodgeConjecture2020}. For fixed $U\subseteq X$, the assignment $V\mapsto\overline{\sigma}_V$ extends to a group morphism 
\[\overline{\sigma}(U):\Hr^0(U(\Rb),\Zb/2)\to\Hr_G^d(U(\Cb),\Zb/2).\] 
 By \cite[Remark 1.6 (ii)]{benoistIntegralHodgeConjecture2020}, one has $\overline{\ev}_U^d\circ\overline{\sigma}(U)=\Id$. The definition of the equivariant fundamental class shows that the morphisms $\overline{\sigma}(U)$ are natural in~$U$, hence determine a morphism $\overline{\sigma}$ of presheaves from $\iota_*\Zb/2$ to the presheaf~${U\mapsto\Hr_G^d(U(\Cb),\Zb/2)}$. Let~$\overline{\sigma}_\nr:\iota_*\Zb/2\to\Hsc_X^d(\Zb/2)$ be the sheafification of~$\overline{\sigma}$. As sheafification is a functor, the equality $\overline{\ev}^d\circ\overline{\sigma}=\Id$ of endomorphisms of $\iota_*\Zb/2$ implies that $\overline{\ev}_\nr^d\circ\overline{\sigma}_\nr=\Id$. The morphism $\overline{\sigma}_\nr$ is then the required section.
\end{proof}

The following corollary generalizes \cite[Theorem 3.2 (d)]{colliot-theleneZerocyclesCohomologyReal1996} (which deals with the cases $c\in\{d,d-1\}$), and \cite[Corollary 3.3 (b)]{colliot-theleneZerocyclesCohomologyReal1996} (which proves the case $c=d-2$ under the additional hypothesis that $\Hr^{2d-1}(X(\Cb),\Zb/2)=0$).

\begin{cor}
\label{cor:mod_2_cycle_class_map_surjective}
Let~$X$ be a smooth algebraic variety of dimension~$d$ over $\Rb$. For all~${c\geq 0}$, the morphism $\Hr^c(X,\Hsc_X^d(\Zb/2))\to\Hr^c(X(\Rb),\Zb/2)$ obtained from~${\overline{\ev}_\nr^d}$ by taking cohomology in degree $c$ admits a canonical section.
\end{cor}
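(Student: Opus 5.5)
The plan is to apply the functor $\Hr^c(X,-)$ (Zariski sheaf cohomology on $X$) to the section constructed in Theorem \ref{thm:functorial_Krasnov}. By that theorem, we have morphisms of Zariski sheaves
$$\iota_*\Zb/2\xrightarrow{\overline{\sigma}_\nr}\Hsc_X^d(\Zb/2)\xrightarrow{\overline{\ev}_\nr^d}\iota_*\Zb/2$$
whose composition is the identity. Since $\Hr^c(X,-)$ is an additive functor, it carries this identity to the identity. Hence $\Hr^c(X,\overline{\sigma}_\nr)$ is a section of $\Hr^c(X,\overline{\ev}^d_\nr):\Hr^c(X,\Hsc^d_X(\Zb/2))\to\Hr^c(X,\iota_*\Zb/2)$. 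This section is canonical because $\overline{\sigma}_\nr$ is.

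It then remains to identify the target with $\Hr^c(X(\Rb),\Zb/2)$. This follows from the natural isomorphism $\Hr^c(X,\iota_*\Lb)\isoto\Hr^c(X(\Rb),\Lb)$ recalled at the start of this subsection (see \cite[Proposition~2.26]{lerbetImageHigherSignature2026}), applied with the constant local system $\Lb=\Zb/2$. The morphism in the statement is, by definition, the composite of $\Hr^c(X,\overline{\ev}^d_\nr)$ with this isomorphism. Composing the section above with the inverse of the isomorphism therefore gives the required canonical section.

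There is essentially no obstacle. The only point to check is the bookkeeping: the map in the statement should be understood exactly as this composite, and that composite is independent of any choices, which holds because every map involved is natural.
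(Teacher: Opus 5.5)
Your proposal is correct and is the paper's argument: apply $\Hr^c(X,-)$ to the section $\overline{\sigma}_\nr$ from Theorem~\ref{thm:functorial_Krasnov}, then use the natural isomorphism $\Hr^c(X,\iota_*\Zb/2)\simeq\Hr^c(X(\Rb),\Zb/2)$, which the paper's one-line proof uses only implicitly. A minor remark: the identity $\overline{\ev}^d_\nr\circ\overline{\sigma}_\nr=\Id$ passes to cohomology because $\Hr^c(X,-)$ is a functor, so the additivity you invoke is not needed.
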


\begin{proof}
This follows from the fact that ${\overline{\ev}_\nr^d}$ has a canonical section (see Theorem~\ref{thm:functorial_Krasnov}), by functoriality of cohomology.
\end{proof}

\subsubsection{Canonical sections of signature morphisms}

 Let $X$ be a smooth algebraic variety of dimension~$d$ over~$\Rb$, let~$\mathcal{L}$ be a line bundle on $X$, and let $n\geq 0$. As indicated in §\ref{parWitt}, the signature homomorphism can be twisted by~$\Lc$ into a morphism $\sign_n:\Ibf^n(\Lc)\to\iota_*\Zb(\Lc)$ of sheaves on~$X$ (see \eqref{signature5}). As a consequence of the Milnor conjecture proved in \cite{orlovExactSequenceKM22007}, the inclusion~$\Ibf^{n+1}(\Lc)\subseteq\Ibf^n(\Lc)$ of powers of the fundamental ideal fits into an exact sequence
\begin{equation}
\label{InHn}
0\to\Ibf^{n+1}(\Lc)\to\Ibf^n(\Lc)\xrightarrow{\pi_n}\Hsc_X^n(\Zb/2)\to 0
\end{equation}
of sheaves on~$X$ (see the isomorphism $\overline{\Ibf}^*\simeq\Hbf^*$ in \cite[p.\,185]{lerbetImageHigherSignature2026}; this reference uses the sheafification of~$U\mapsto\Hr_\et^n(U,\Zb/2)$, which coincides with our definition of~$\Hsc_X^n(\Zb/2)$ in view of Cox's comparison theorem between \'etale and $G$-equivariant cohomology, see \cite{coxEtaleHomotopyType1979}). As explained in \cite[pp.\,189--190]{lerbetImageHigherSignature2026}, the signature and evaluation homomorphisms make \eqref{InHn} part of a commutative exact ladder
\begin{equation}
\label{eq:milnor_conjecture}
\begin{tikzcd}[row sep=small]
	0 & {\Ibf^{n+1}(\Lc)} & {\Ibf^n(\Lc)} & {\Hsc_X^n(\Zb/2)} & 0 \\
	0 & {\iota_*\Zb(\Lc)} & {\iota_*\Zb(\Lc)} & {\iota_*\Zb/2} & 0
	\arrow[from=1-1, to=1-2]
	\arrow[from=1-2, to=1-3]
	\arrow["{\sign_{n+1}}"', from=1-2, to=2-2]
	\arrow["{\pi_n}", from=1-3, to=1-4]
	\arrow["{\sign_n}"', from=1-3, to=2-3]
	\arrow[from=1-4, to=1-5]
	\arrow["{\oev_{\nr}^n}"', from=1-4, to=2-4]
	\arrow[from=2-1, to=2-2]
	\arrow["{\cdot 2}"', from=2-2, to=2-3]
	\arrow["{\text{mod}\;2}"', from=2-3, to=2-4]
	\arrow[from=2-4, to=2-5]
\end{tikzcd}
\end{equation}
of sheaves on~$X$. When $n=d$, the left vertical arrow in \eqref{eq:milnor_conjecture} is an isomorphism (see \cite[Corollary 2.33]{lerbetImageHigherSignature2026}), so the commutative diagram
\begin{equation}
\label{eq:milnor_conjecture2}
\begin{tikzcd}[row sep=small]
	 {\Ibf^d(\Lc)} & {\Hsc_X^d(\Zb/2)}  \\
	 {\iota_*\Zb(\Lc)} & {\iota_*\Zb/2} 
	\arrow["{\pi_d}",from=1-1, to=1-2]
	\arrow["{\oev_{\nr}^d}"', from=1-2, to=2-2]
	\arrow["{\sign_d}"', from=1-1, to=2-1]
	\arrow["{\text{mod}\;2}"', from=2-1, to=2-2]
\end{tikzcd}
\end{equation}
is a fiber product square.

The following corollary of Theorem \ref{thm:functorial_Krasnov} generalizes results of Hornbostel \cite{hornbostelFewComputationsReal2025} and Lerbet~\cite{lerbetImageHigherSignature2026}, who proved the surjectivity of $\sign_{d,X}^c$ in a number of cases (when~${c\in\{d-1,d\}}$ in \cite[Proposition 3.4, Proof of Lemma 5.1]{hornbostelFewComputationsReal2025} and when ${c\in\{0,d-1,d\}}$ or when~$c=d-2$ and $\Hr^{2d-1}(X(\Cb),\Zb/2)=0$ in \cite[Theorem~3.5, Proof of Proposition~4.4, Corollary~4.8 and Corollary~5.2]{lerbetImageHigherSignature2026}). 

\begin{cor}
\label{cor:integral_cycle_class_map_split_surjective}
Let $X$ be a smooth variety of dimension~$d$  over $\Rb$ and let~$\Lc$ be a line bundle on~$X$. The morphism $\sign_d:\Ibf^d(\Lc)\to\iota_*\Zb(\Lc)$ has a canonical section, which induces a canonical section of  $\sign_{d,X}^c:\Hr^c(X,\Ibf^d(\Lc))\to\Hr^c(X(\Rb),\Zb(\Lc))$ for~$c\geq 0$.
\end{cor}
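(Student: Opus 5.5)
The plan is to use the fiber product square \eqref{eq:milnor_conjecture2} together with the canonical section $\overline{\sigma}_\nr$ of $\oev^d_\nr$ from Theorem~\ref{thm:functorial_Krasnov}. Since \eqref{eq:milnor_conjecture2} is cartesian in the category of sheaves on $X$, a morphism $\iota_*\Zb(\Lc)\to\Ibf^d(\Lc)$ is the same as a pair of morphisms
$$u:\iota_*\Zb(\Lc)\to\iota_*\Zb(\Lc)\quad\text{and}\quad v:\iota_*\Zb(\Lc)\to\Hsc^d_X(\Zb/2)$$
satisfying $(\mathrm{mod}\;2)\circ u=\oev^d_\nr\circ v$.

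We take $u=\Id$ and $v=\overline{\sigma}_\nr\circ(\mathrm{mod}\;2)$. Here $\mathrm{mod}\;2:\iota_*\Zb(\Lc)\to\iota_*\Zb/2$ is the reduction map appearing in the bottom row (twisting becomes trivial mod $2$). The compatibility condition then follows from $\oev^d_\nr\circ\overline{\sigma}_\nr=\Id$:
$$\oev^d_\nr\circ v=\oev^d_\nr\circ\overline{\sigma}_\nr\circ(\mathrm{mod}\;2)=\mathrm{mod}\;2=(\mathrm{mod}\;2)\circ u.$$
The universal property of the fiber product therefore yields a unique $s:\iota_*\Zb(\Lc)\to\Ibf^d(\Lc)$ with $\sign_d\circ s=\Id$ and $\pi_d\circ s=v$. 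In particular $s$ is a section of $\sign_d$, and it is canonical because it is built only from the canonical $\overline{\sigma}_\nr$ and the universal property.

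For the cohomological statement, we apply the functor $\Hr^c(X,-)$. It gives $\Hr^c(X,\sign_d)\circ\Hr^c(X,s)=\Id$ on $\Hr^c(X,\iota_*\Zb(\Lc))$. We then compose with the natural isomorphism $\Hr^c(X,\iota_*\Zb(\Lc))\isoto\Hr^c(X(\Rb),\Zb(\Lc))$ recalled at the start of \S\ref{parexponentbounds} (valid for any local system on $X(\Rb)$). By definition this composite is $\sign^c_{d,X}$, so $\Hr^c(X,s)$ transported along the isomorphism is a canonical section of $\sign^c_{d,X}$.

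The only point needing care is whether \eqref{eq:milnor_conjecture2} really is a fiber product in sheaves and not merely a commutative square. This follows from the ladder \eqref{eq:milnor_conjecture}: both rows are exact and the left vertical map $\sign_{d+1}$ is an isomorphism by \cite[Corollary~2.33]{lerbetImageHigherSignature2026}. A short diagram chase with equal kernels and surjective horizontal maps then gives cartesianness, and the paper has already stated this, so no real obstacle remains.
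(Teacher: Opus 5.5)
Your proposal is correct and follows the paper's proof: the section of $\sign_d$ comes from pulling back the canonical section $\overline{\sigma}_\nr$ of $\oev^d_\nr$ along reduction mod $2$ via the fiber product square \eqref{eq:milnor_conjecture2}, and applying $\Hr^c(X,-)$ together with the identification $\Hr^c(X,\iota_*\Zb(\Lc))\simeq\Hr^c(X(\Rb),\Zb(\Lc))$ then gives the section of $\sign^c_{d,X}$. Your justification of cartesianness is also sound: the rows of \eqref{eq:milnor_conjecture} are exact and the left vertical arrow $\sign_{d+1}$ is an isomorphism.
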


\begin{proof}
By pullback along the reduction mod~$2$ morphism $\iota_*\Zb(\Lc)\to\iota_*\Zb/2$, the section $\overline{\sigma}_\nr$ of $\overline{\ev}_\nr^d$ constructed in the proof of Theorem \ref{thm:functorial_Krasnov} induces a section
\begin{equation}
\label{sectiontau}
\iota_*\Zb(\Lc)\to\Hsc_X^d(\Zb/2)\times_{(\iota_*\Zb/2)}\iota_*\Zb(\Lc)\simeq\Ibf^d(\Lc)
\end{equation}
of $\sign_d$ (where the isomorphism in \eqref{sectiontau} stems from the fiber product square \eqref{eq:milnor_conjecture2}). By functoriality of cohomology, the morphism $\sign_{d,X}^c$ obtained from $\sign_d$ by taking cohomology also has a section.
\end{proof}

\subsubsection{Exponent bounds}

The following theorem (Theorem \ref{thexpcokerintro} from the introduction) confirms the validity of the exponent bound predicted in \cite[Conjecture~6.5]{lerbetImageHigherSignature2026}.

\begin{thm}
\label{thexpcoker}
Let $X$ be a smooth variety of dimension~$d$ over $\Rb$, let~$\Lc$ be a line bundle on~$X$, and let $0\leq c\leq d$ and $0\leq n\leq d$ be integers. Then the cokernel of the homomorphism $\sign_{n,X}^c:\Hr^c(X,\Ibf^n(\Lc))\to\Hr^c(X(\Rb),\Zb(\Lc))$ is killed by $2^{d-n}$.
\end{thm}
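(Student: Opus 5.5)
The plan is to reduce to the case $n=d$, which is handled by Corollary \ref{cor:integral_cycle_class_map_split_surjective}, by comparing $\sign_n$ with $\sign_d$ through the inclusion $\Ibf^d(\Lc)\subseteq\Ibf^n(\Lc)$. For $0\leq n\leq d$ this inclusion is compatible with the signature maps up to a power of $2$. Indeed, by the definition $\sign_n=\sign/2^n$, one has a commutative square of sheaves on $X$:
\[
\sign_n\circ\bigl(\Ibf^d(\Lc)\hookrightarrow\Ibf^n(\Lc)\bigr)=2^{d-n}\cdot\sign_d:\Ibf^d(\Lc)\to\iota_*\Zb(\Lc).
\]
Equivalently, this is the composite of the left squares of the ladders \eqref{eq:milnor_conjecture} for $n,n+1,\dots,d-1$, each of which multiplies by $2$.

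Next I take degree $c$ cohomology. Writing $i:\Hr^c(X,\Ibf^d(\Lc))\to\Hr^c(X,\Ibf^n(\Lc))$ for the induced map, functoriality gives
\[
\sign^c_{n,X}\circ i=2^{d-n}\cdot\sign^c_{d,X}
\]
as maps to $\Hr^c(X(\Rb),\Zb(\Lc))$. Here I use the identification $\Hr^c(X,\iota_*\Zb(\Lc))=\Hr^c(X(\Rb),\Zb(\Lc))$ recalled before Theorem \ref{thm:functorial_Krasnov}.

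By Corollary \ref{cor:integral_cycle_class_map_split_surjective}, $\sign^c_{d,X}$ is surjective; in fact it has a section $s$. Hence for any $x\in\Hr^c(X(\Rb),\Zb(\Lc))$, we get $2^{d-n}x=\sign^c_{n,X}(i(s(x)))$, which lies in the image of $\sign^c_{n,X}$. Therefore the cokernel is killed by $2^{d-n}$.

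The only step needing care is the compatibility of the twisted signature maps $\sign_n$ and $\sign_{n+1}$ under the inclusion $\Ibf^{n+1}(\Lc)\subseteq\Ibf^n(\Lc)$, namely $\sign_n|_{\Ibf^{n+1}(\Lc)}=2\,\sign_{n+1}$. This is precisely the commutativity of the left square of \eqref{eq:milnor_conjecture}, which is taken from \cite{lerbetImageHigherSignature2026}. So no real obstacle remains beyond quoting it. The hypothesis $c\leq d$ plays no role in the argument.
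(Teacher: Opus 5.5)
Your proposal is correct and follows essentially the same route as the paper: the inclusion $\Ibf^d(\Lc)\subseteq\Ibf^n(\Lc)$ gives the commutative square $\sign^c_{n,X}\circ i=2^{d-n}\cdot\sign^c_{d,X}$, and the surjectivity of $\sign^c_{d,X}$ from Corollary \ref{cor:integral_cycle_class_map_split_surjective} then shows that every multiple of $2^{d-n}$ lies in the image of $\sign^c_{n,X}$. Your side remark that the hypothesis $c\leq d$ plays no role is also correct.
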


\begin{proof}
Since $n\leq d$, the sheaf $\Ibf^d(\Lc)$ is a subsheaf of $\Ibf^n(\Lc)$. By definition of the signature homomorphisms (see \cite[(2-4)]{lerbetImageHigherSignature2026}), the inclusion $\Ibf^d(\Lc)\subseteq\Ibf^n(\Lc)$ induces a commutative square
\begin{equation}
\label{signdn}
\begin{tikzcd}[row sep=small]
	{\Hr^c(X,\Ibf^d(\Lc))} & {\Hr^c(X,\Ibf^n(\Lc))} \\
	{\Hr^c(X(\Rb),\Zb(\Lc))} & {\Hr^c(X(\Rb),\Zb(\Lc)).}
	\arrow[from=1-1, to=1-2]
	\arrow["{\sign_{d,X}^c}"', from=1-1, to=2-1]
	\arrow["{\,\sign_{n,X}^c}", from=1-2, to=2-2]
	\arrow["{\cdot 2^{d-n}}"', from=2-1, to=2-2]
\end{tikzcd}
\end{equation}
Since $\sign_{d,X}^c$ is surjective by Corollary \ref{cor:integral_cycle_class_map_split_surjective}, the square \eqref{signdn} implies that the image of $\sign_{n,X}^c$ contains the multiples of $2^{d-n}$. This proves the theorem.
\end{proof}

\begin{rem}
In \cite{lerbetWittGroupsSmooth2026}, the second author studies Witt groups of smooth real algebraic varieties of small dimension. The results of \S\ref{parexponentbounds} can be used to improve or simplify some of the arguments there. As an example, the surjectivity hypothesis in~\cite[Lemma 2.14]{lerbetWittGroupsSmooth2026} is automatic by Corollary \ref{cor:mod_2_cycle_class_map_surjective}.
\end{rem}

\subsection{On the optimality of the exponent bound}

\subsubsection{Powers of the punctured affine line}

We first study $\Gb_{m,\Rb}^d:=(\Abb^1_{\Rb}\setminus\{0\})^d$. The following proposition and its proof are closely related to \cite[\S5]{hennigRealCycleClass2025}.

\begin{prop}
\label{propmultiplicative}
Fix $d\geq 0$ and $0\leq n<d$. Set $X:=\Gb_{m,\Rb}^d$. The cokernel of the signature morphism $\sign_{n,X}^0:\Hr^0(X,\Ibf^n)\to\Hr^0(X(\Rb),\Zb)$ is not killed by $2^{d-n-1}$.
\end{prop}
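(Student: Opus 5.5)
The idea is to compute the image of $\sign^0_{n,X}$ explicitly enough to see that it lies in a sublattice of $\Hr^0(X(\Rb),\Zb)$ that misses $2^{d-n-1}\sigma$ for a well-chosen $\sigma$. The real locus $X(\Rb)=(\Rb^\times)^d$ has $2^d$ connected components, the orthants, indexed by sign vectors $\epsilon\in\{\pm1\}^d$. So $\Hr^0(X(\Rb),\Zb)$ is the group of functions $\{\pm1\}^d\to\Zb$. For $S\subseteq\{1,\dots,d\}$, let $\chi_S$ be the indicator function of the set of orthants where $t_i<0$ for all $i\in S$. These functions form a $\Zb$-basis of $\Hr^0(X(\Rb),\Zb)$ (Möbius inversion on the Boolean lattice). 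I will show that the image of $\sign^0_{n,X}$ is contained in the sublattice
$$\Lambda_n:=\bigoplus_{S}2^{\max(0,|S|-n)}\,\Zb\,\chi_S .$$
Then take $\sigma:=\chi_{\{1,\dots,d\}}$, the indicator of the negative orthant. The element $2^{d-n-1}\sigma$ has coordinate $2^{d-n-1}$ on $\chi_{\{1,\dots,d\}}$, which is not divisible by $2^{d-n}$ because $n<d$. Hence $2^{d-n-1}\sigma\notin\Lambda_n$, and the cokernel is not killed by $2^{d-n-1}$.

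The key structural step is a decomposition of unramified $\Ibf^n$-cohomology of $\Gb_m\times Y$. For $Y$ smooth over $\Rb$, I will prove
$$\Hr^0(\Gb_{m}\times Y,\Ibf^n)=p^*\Hr^0(Y,\Ibf^n)\oplus\langle\langle t\rangle\rangle\cdot p^*\Hr^0(Y,\Ibf^{n-1}),$$
with the convention $\Ibf^m=\Wbf$ for $m\leq 0$ and $\langle\langle t\rangle\rangle=\langle 1,-t\rangle$. The tools are:
\begin{itemize}
\item homotopy invariance $\Hr^0(\Abb^1\times Y,\Ibf^n)=\Hr^0(Y,\Ibf^n)$;
\item the localization sequence for the zero section $Y\subset\Abb^1\times Y$, whose residue map to $\Hr^0(Y,\Ibf^{n-1})$ is split by multiplication by $\langle\langle t\rangle\rangle$.
\end{itemize}
This is the Gersten/Rost–Schmid formalism for $\Ibf^n$, equivalently the Milnor–Witt $\Abb^1$/$\Gb_m$ sequence. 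In fact only the inclusion ``$\subseteq$'' is needed, i.e. that every element of $\Hr^0(\Gb_m\times Y,\Ibf^n)$ is of this form. Iterating this $d$ times from $Y=\Spec\Rb$, and using $\Ir(\Rb)^m=2^m\Zb$ inside $\Wr(\Rb)=\Zb$, I expect every element of $\Hr^0(X,\Ibf^n)$ to be a sum $\sum_S a_S\langle\langle t_S\rangle\rangle$. Here $\langle\langle t_S\rangle\rangle=\prod_{i\in S}\langle\langle t_i\rangle\rangle$ and $a_S\in\Ir(\Rb)^{\max(n-|S|,0)}$.

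The signature computation is then direct. The form $\langle 1,-t_i\rangle$ has signature $2$ where $t_i<0$ and $0$ where $t_i>0$, so $\sign\langle\langle t_S\rangle\rangle=2^{|S|}\chi_S$. An element $a_S\in 2^{\max(n-|S|,0)}\Zb$ therefore contributes $\sign_n(a_S\langle\langle t_S\rangle\rangle)=a_S 2^{|S|-n}\chi_S$. When $|S|\leq n$, this is an arbitrary integer multiple of $\chi_S$. When $|S|>n$, it is a multiple of $2^{|S|-n}\chi_S$. This gives exactly the lattice $\Lambda_n$. It also shows that $\Lambda_n$ is the whole image, though equality is not needed.

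I expect the main obstacle to be the decomposition step. The delicate points are controlling the twist and the $\Ibf^{n-1}$ versus $\Ibf^n$ bookkeeping in the residue map for $\Ibf^n$, and checking that the splitting is compatible with the signature. If the full splitting is awkward to establish, I will instead argue by induction on $d$ using only the residue along $t_d=0$ and restriction to $t_d=1$. An element $\alpha$ minus $\langle\langle t_d\rangle\rangle\cdot\partial(\alpha)$ is unramified on $\Abb^1\times\Gb_m^{d-1}$, hence by homotopy invariance comes from $\Gb_m^{d-1}$, and the induction hypothesis then applies to both pieces. In that version, the signature side of the induction is just the identity $\chi_{S\cup\{d\}}=\chi_S\cdot\chi_{\{d\}}$.
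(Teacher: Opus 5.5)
Your proposal is correct and follows essentially the same route as the paper. The paper also uses the splitting $\Hr^0(\Gb_{m}\times Y,\Ibf^n)=\Hr^0(Y,\Ibf^n)\oplus\Hr^0(Y,\Ibf^{n-1})\cdot\langle 1,-t\rangle$, obtained from homotopy invariance and purity for $\Ibf$-cohomology. It iterates this to get generators $\langle 1,1\rangle^{\otimes \max(n-|S|,0)}\otimes\langle\langle t_S\rangle\rangle$, works with the basis of indicator functions of the sets $\{t_i<0 \text{ for } i\in S\}$, and finds signatures $\max(2^{|S|-n},1)\chi_S$. The only difference is that the paper also records freeness and computes the cokernel exactly as $\bigoplus_S\Zb/\max(2^{|S|-n},1)$, whereas you rightly note that the generation statement alone suffices.
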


\begin{proof}
Let $(t_1,\ldots,t_d)$ be the coordinates of $X$. If $I=\{i_1<\cdots<i_l\}$ is a subset of~$\{1,\ldots,d\}$, we consider the quadratic form \[q_{I,n}:=\varepsilon_{|I|,n}\otimes\langle 1,-t_{i_1}\rangle\otimes\cdots\otimes\langle 1,-t_{i_l}\rangle\in\Hr^0(X,\Ibf^n),\] where, given an integer $0\leq l\leq d$, the form $\varepsilon_{l,n}$ is defined as
\[\varepsilon_{l,n}:=\left\{\begin{array}{cl}
\langle 1,1\rangle^{\otimes n-l} & \text{if}\;l\leq n\\
\langle 1\rangle                 & \text{if}\;l>n.
\end{array}\right.\]

Applying the proof of \cite[Lemma 2.16]{bachmannMilnorWittMotives2025} to $\Ibf$-cohomology instead of $\Kbf^{\text{MW}}$\nobreakdash-cohomology (making use of homotopy invariance \cite[Théorème 11.2.9]{faselGroupesChowWitt2008} and purity \cite[Theorem 2.41]{hornbostelRealCycleClass2021} for $\Ibf$-cohomology) shows that
$$\Hr^0(\Gb_{m,\Rb}^d,\Ibf^n)=\Hr^0(\Gb_{m,\Rb}^{d-1},\Ibf^n)\oplus \Hr^0(\Gb_{m,\Rb}^{d-1},\Ibf^{n-1})\cdot\langle 1,-t_d\rangle,$$
with the convention that $\Ibf^m=\Wbf$ when $m\leq 0$. Induction on $d$ then implies that~$\Hr^0(X,\Ibf^n)$ is the free abelian group generated by the forms $(q_{I,n})_{I\subseteq\{1,\ldots,d\}}$.

Given an open and closed subset $A$ of $X(\Rb)$, we let $\chi_A\in\Hr^0(X(\Rb),\Zb)$ denote its characteristic function. The set $\pi_0(X(\Rb))$ of connected components of $X(\Rb)$ is in bijection with the set of subsets $I$ of $\{1,\ldots,d\}$ via the association $I\mapsto A_I$, where
\[A_I:=\{(t_i)_i\in X(\Rb)\mid t_i<0\;\text{if}\;i\in I\text{ and }t_i>0\;\text{if}\;i\notin I\}.\] 
We conclude that $(\chi_{A_I})_{I\subseteq\{1,\ldots,d\}}$ is a basis of $\Hr^0(X(\Rb),\Zb)=\Zb^{\pi_0(X(\Rb))}$. Set further 
$$B_I:=\{(t_i)_i\in X(\Rb)\mid t_i<0\;\text{if}\;i\in I\}.$$
Then $A_{I}=B_I\setminus\bigcup_{i\notin I}B_{I\cup\{i\}}$. For $\{i_1,\dots, i_k\}=J\subseteq\{1,\dots,d\}\setminus I$, one computes that~$B_{I\cup\{i_1\}}\cap\cdots\cap B_{I\cup\{i_k\}}=B_{I\cup\{i_1,\ldots,i_k\}}$, so the inclusion-exclusion formula yields 
\[\chi_{A_I}=\sum_{k=0}^{d-|I|}(-1)^{k}\sum_{J\subseteq\{1,\ldots,d\}\setminus I,\,|J|=k}\chi_{B_{I\cup J}}.\] 
We deduce that the family $(\chi_{B_I})_{I\subseteq\{1,\ldots,d\}}$ generates $\Hr^0(X(\Rb),\Zb)$. Since its cardinality is the rank $2^d$ of this free abelian group, it follows that $(\chi_{B_I})_{I\subseteq\{1,\ldots,d\}}$ is in fact a basis of $\Hr^0(X(\Rb),\Zb)$. 

For $I\subseteq\{1,\ldots,d\}$, set $u_{I,n}:=\max(2^{|I|-n},1)\in\Zb$. By definition, one has \[\sign_{n,X}^0(q_{I,n})=u_{I,n}\chi_{B_I}.\] We conclude that $\sign_{n,X}^0:\Hr^0(X,\Ibf^n)\to\Hr^0(X(\Rb),\Zb)$ is an isomorphism onto the subgroup of $\Hr^0(X(\Rb),\Zb)$ generated by the $(u_{I,n}\chi_{B_I})_{I\subseteq\{1,\ldots,d\}}$, hence that the cokernel of $\sign_{n,X}^0$ is isomorphic to $\bigoplus_{I\subseteq\{1,\ldots,d\}}\Zb/u_{I,n}$. This group is not killed by~$2^{d-n-1}$ because $u_{\{1,\ldots,d\},n}=2^{d-n}$.
\end{proof}

\subsubsection{Cubic hypersurfaces}

We now focus on the case of smooth cubic hypersurfaces.

\begin{prop}
\label{prop:cokernel_cubic}
Let $X\subseteq\Pb_\Rb^{d+1}$ be a smooth cubic hypersurface of dimension~$d\leq 4$. If $d=4$, we further assume that~$X$ is very general. Fix $0\leq n\leq d$. The image of 
$$\sign_{n,X}^0:\Hr^0(X,\Ibf^n)\to\Hr^0(X(\Rb),\Zb)$$
is generated by $1\in \Hr^0(X(\Rb),\Zb)$ and by the multiples of $2^{d-n}$.

If $n<d$ and $X(\Rb)$ is disconnected, the cokernel of $\sign_{n,X}^0$ is not killed by~$2^{d-n-1}$.
\end{prop}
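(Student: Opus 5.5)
The plan is to prove the two inclusions separately and then deduce the non-divisibility statement.

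\emph{Easy inclusion.} The form $\langle 1,1\rangle^{\otimes n}\in\Hr^0(X,\Ibf^n)$ has $\sign_{n,X}^0$ equal to $2^n/2^n=1$. Theorem~\ref{thexpcoker} (with $c=0$, $\Lc=\Oc_X$) shows that every multiple of $2^{d-n}$ lies in the image. So the image contains the group $\Zb\cdot 1+2^{d-n}\Hr^0(X(\Rb),\Zb)$.

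\emph{Reverse inclusion, by descending induction on $n$.} For $n=d$ there is nothing to prove. Fix $n<d$ and $\alpha\in\Hr^0(X,\Ibf^n)$, and take global sections in the ladder \eqref{eq:milnor_conjecture}. This gives
$$\sign_{n,X}^0(\alpha)\bmod 2=\oev^n_{\nr,X}(\pi_n\alpha).$$
Since $n\leq d-1\leq 3$, the image of $\oev^n_{\nr,X}$ equals that of $\oev^n_X$, which is generated by $1$ by Proposition~\ref{prop:image_global_sign_hypersurface}. For $n\leq 2$ this equality of images is Corollary~\ref{corpetitdegre}. For $n=3$, $d=4$ and $X$ very general, it is Corollary~\ref{corth3bis}, whose hypothesis was verified in the proof of Proposition~\ref{propcubic}. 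Hence $\sign_{n,X}^0(\alpha)\equiv\varepsilon\pmod 2$ for a constant $\varepsilon\in\{0,1\}$. Replacing $\alpha$ by $\alpha-\varepsilon\langle1,1\rangle^{\otimes n}$, we may assume $\oev^n_{\nr,X}(\pi_n\alpha)=0$.

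The key step is then to find $\alpha''\in\Hr^0(X,\Ibf^n)$ with
$$\pi_n\alpha''=\pi_n\alpha\quad\text{and}\quad\sign^0_{n,X}(\alpha'')\in 2^{d-n}\Hr^0(X(\Rb),\Zb).$$
Granting this, $\alpha-\alpha''$ lies in $\Hr^0(X,\Ibf^{n+1})$ by \eqref{InHn}. The induction hypothesis and the relation $\sign_n=2\,\sign_{n+1}$ on $\Ibf^{n+1}$ then give
$$\sign_{n,X}^0(\alpha-\alpha'')\in 2\Zb\cdot1+2^{d-n}\Hr^0(X(\Rb),\Zb),$$
which closes the induction.

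To construct $\alpha''$, I would use that $\Hr^n_G(X(\Cb),\Zb/2)\to\Hr^n_{\nr}(X,\Zb/2)$ is surjective for $n\leq 2$ (Lemma~\ref{lemmepetitdegre}). For $n=3$, I would use the argument in the proof of Theorem~\ref{th3bis}, which at the level of images yields a global equivariant representative. Next, by Proposition~\ref{prop:weak_lefschetz}, for $n<d$ the group $\Hr^n_G(X(\Cb),\Zb/2)$ is restricted from $\Pb^{d+1}_\Rb$. Its elements are therefore combinations of $h^k\omega^{n-2k}$, where $h$ is the hyperplane class and $\omega$ generates $\Hr^1(G,\Zb/2)$. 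A class with vanishing evaluation only involves terms with $k\geq1$. These are algebraic in the sense that they are products with the mod~$2$ class of the line bundle $\Oc(1)$. I would realise their images in $\Hr^0(X,\Hsc^n_X(\Zb/2))$ as $\pi_n$ of explicit unramified forms built from $\Oc(1)$, such as products of Pfister forms $\langle 1,-\ell_1/\ell_2\rangle$ in ratios of linear forms, whose residues cancel. I would then check that their signatures are $0$, or at least divisible by $2^{d-n}$, on $X(\Rb)$. I expect this realisation-and-signature computation to be the main obstacle. If it resists, the fallback is to bound the signature difference between components using the induction together with the parity information in all degrees $n,\dots,d-1$ simultaneously.

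\emph{Final assertion.} A smooth real cubic hypersurface has at most two real components. When it is disconnected, $\Hr^0(X(\Rb),\Zb)=\Zb^2$. The quotient by $\Zb\cdot(1,1)+2^{d-n}\Zb^2$ is then $\Zb/2^{d-n}$, which is not killed by $2^{d-n-1}$ when $n<d$.
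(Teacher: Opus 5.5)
Your normalization step and the easy inclusion via Theorem~\ref{thexpcoker} are fine. The key step, constructing $\alpha''$, is however left open: you flag it yourself as the main obstacle. What you sketch for it (products of forms $\langle 1,-\ell_1/\ell_2\rangle$ ``whose residues cancel'', plus a vague fallback) comes with no argument that such forms are unramified, have the required $\pi_n$, or have signatures divisible by $2^{d-n}$. Indeed $\langle 1,-\ell_1/\ell_2\rangle$ alone has nonzero residue along $\{\ell_1=0\}\cap X$.

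The missing observation is that there is nothing to realize. Each class $h^k\omega^{n-2k}$ with $k\geq 1$ restricts to zero on the complement $U$ of a hyperplane section, where $\Oc(1)$ is trivial. A section of $\Hsc^n_X(\Zb/2)$ vanishing on a dense open subset is zero, since $\Hr^n_{\nr}(X,\Zb/2)$ injects into $\operatorname{colim}_V\Hr^n_G(V(\Cb),\Zb/2)$, as recalled in the proof of Lemma~\ref{thtriangle}. Combine this with two facts you already use: the surjectivity of $\Hr^n_G(X(\Cb),\Zb/2)\to\Hr^n_{\nr}(X,\Zb/2)$ for $n\leq 2$, and Proposition~\ref{prop:weak_lefschetz}. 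Together they show that $\Hr^n_{\nr}(X,\Zb/2)$ is generated by $\pi_n(\langle 1,1\rangle^{\otimes n})$. The paper reaches the same conclusion through the square \eqref{restrictionsheafification} and the Colliot-Th\'el\`ene--Ojanguren computation of $\Hr^n_{\nr}(\Pb^{d+1}_\Rb,\Zb/2)$.

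Since $X(\Rb)\neq\varnothing$ (odd degree), your normalized $\alpha$ then satisfies $\pi_n\alpha=0$. Thus $\alpha\in\Hr^0(X,\Ibf^{n+1})$ and $\alpha''=0$ works. Equivalently, $\Hr^0(X,\Ibf^n)$ is generated by $\Hr^0(X,\Ibf^{n+1})$ and $\langle 1,1\rangle^{\otimes n}$. Hence the image of $\sign^0_{n,X}$ is generated by $1$ and twice the image of $\sign^0_{n+1,X}$, which is exactly the paper's induction step for $n\leq 2$.

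Your treatment of $n=3$ also needs correcting. The proof of Theorem~\ref{th3bis} does not produce a global equivariant class mapping to a given degree-$3$ unramified class; it only produces a class $\varepsilon$ with the same evaluation. You do not need it. The case $n=3$ forces $(d,n)=(4,3)$, so $n=d-1$. Whenever $n=d-1$, your normalization already gives $\sign^0_{n,X}(\alpha)\in 2\Hr^0(X(\Rb),\Zb)=2^{d-n}\Hr^0(X(\Rb),\Zb)$, so $\alpha''=\alpha$ works. This is how the paper handles $(4,3)$, with Corollary~\ref{corth3bis} as input exactly as you cite it.

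Finally, the bound of two real components is true but unnecessary for the last assertion. If $\sigma$ is the indicator function of one component, then $2^{d-n-1}\sigma\notin\Zb\cdot 1+2^{d-n}\Hr^0(X(\Rb),\Zb)$.
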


\begin{proof}
The second assertion is a direct consequence of the first, which we now prove.

We argue by decreasing induction on $n$. When $n=d$, the morphism $\sign_{n,X}^0$ is surjective by \cite[Corollary 5.2]{lerbetImageHigherSignature2026}, which proves the proposition in this case. Suppose from now on that $n<d$. We will rely on the commutative exact~diagram
\begin{equation}
\label{signsignev}
\begin{tikzcd}[row sep=small, column sep=small]
	0 & {\Hr^0(X,\Ibf^{n+1})} & {\Hr^0(X,\Ibf^n)} & {\Hr^0(X,\Hsc_X^n(\Zb/2))=\Hr_\nr^n(X,\Zb/2)} \\
	0 & {\Hr^0(X(\Rb),\Zb)} & {\Hr^0(X(\Rb),\Zb)} & {\Hr^0(X(\Rb),\Zb/2)}
	\arrow[from=1-1, to=1-2]
	\arrow[from=1-2, to=1-3]
	\arrow["{\sign_{n+1,X}^0}"', from=1-2, to=2-2]
	\arrow[from=1-3, to=1-4]
	\arrow["{\sign_{n,X}^0}"', from=1-3, to=2-3]
	\arrow["{\overline{\ev}_{\nr,X}^n}"', from=1-4, to=2-4]
	\arrow[from=2-1, to=2-2]
	\arrow["{\cdot 2}"', from=2-2, to=2-3]
	\arrow[from=2-3, to=2-4]
\end{tikzcd}
\end{equation}
obtained by taking cohomology in (\ref{eq:milnor_conjecture}).

Assume first that $d=4$ (so $X$ is very general) and $n=3$. It follows from the proof of Proposition \ref{propcubic} for $d=4$ (see how Proposition~\ref{prop:image_global_sign_hypersurface} and Corollary~\ref{corth3bis} are combined there) that the image of $\overline{\ev}_{\nr,X}^3$ is generated by~$1\in\Hr^0(X(\Rb),\Zb/2)$. We deduce from \eqref{signsignev} that the image of $\sign_{3,X}^0$ is contained in the subgroup generated by~$1\in \Hr^0(X(\Rb),\Zb)$ and by the multiples of $2$. The class $1\in \Hr^0(X(\Rb),\Zb)$ is in the image of $\sign_{3,X}^0$ because $\sign_{3,X}^0(\langle 1,1\rangle^{\otimes 3})=1$. So are the multiples of~$2$ by \eqref{signsignev} and the surjectivity of $\sign_{4,X}^0$ (the base case of the induction). This completes the proof of the proposition in this case.

We henceforth assume that $n\leq 2$. Restriction and sheafification morphisms fit into a commutative square
\begin{equation}
\label{restrictionsheafification}
\begin{tikzcd}[row sep=small, column sep=small]
	{\Hr_G^n(\Pb^{d+1}(\Cb),\Zb/2)} & {\Hr_G^n(X(\Cb),\Zb/2)} \\
	{\Hr_\nr^n(\Pb_\Rb^{d+1},\Zb/2)} & {\Hr_\nr^n(X,\Zb/2)}
	\arrow["\sim", from=1-1, to=1-2]
	\arrow[from=1-1, to=2-1]
	\arrow[two heads, from=1-2, to=2-2]
	\arrow[from=2-1, to=2-2]
\end{tikzcd}
\end{equation}
of abelian groups, whose top horizontal arrow is bijective by Proposition~\ref{prop:weak_lefschetz} (as~${n<d}$) and whose right vertical arrow is surjective by Lemma \ref{lemmepetitdegre} (as~$n\leq 2$).
It follows from~\eqref{restrictionsheafification} that the morphism $\Hr_\nr^n(\Pb_\Rb^{d+1},\Zb/2)\to\Hr_\nr^n(X,\Zb/2)$ is surjective. The morphism~$\Hr^n_{\nr}(\Spec(\Rb),\Zb/2)\to\Hr^n_\nr(\Pb_\Rb^{d+1},\Zb/2)$ given by pullback from the point is surjective (see \cite[Proposition 1.2]{colliot-theleneVarietesUnirationnellesNon1989}, and use the comparison between \'etale and $G$\nobreakdash-equivariant cohomology \cite{coxEtaleHomotopyType1979}), hence so is~${\Hr^n_{\nr}(\Spec(\Rb),\Zb/2)\to\Hr_\nr^n(X,\Zb/2)}$. The class $\langle 1,1\rangle^{\otimes n}\in \Hr^0(X,\Ibf^n)$ is sent by the top right horizontal arrow of \eqref{signsignev} to the image in~$\Hr_\nr^n(X,\Zb/2)$ of the nonzero element of~$\Hr^n_{\nr}(\Spec(\Rb),\Zb/2)$, hence to a generator of~$\Hr_\nr^n(X,\Zb/2)$. We deduce from \eqref{signsignev} that $\Hr^0(X,\Ibf^n)$ is generated by~$\Hr^0(X,\Ibf^{n+1})$ and $\langle 1,1\rangle^{\otimes n}$. As~$\sign_{n,X}^0(\langle 1,1\rangle^{\otimes n})=1$, it follows from \eqref{signsignev} again that the image of $\sign_{n,X}^0$ is generated by~$1\in \Hr^0(X(\Rb),\Zb)$ and by twice the image of~$\sign_{n+1,X}^0$. The induction hypothesis completes the proof.
\end{proof}

Let us recall a few facts for use in the proof of the next proposition. Let $X$ be a connected smooth variety over $\Rb$. The group~$\Wr_{\nr}(X):=\Hr^0(X,\Wbf)$ can be identified with the subgroup of $\Wr(\Rb(X))$ consisting of those forms whose residues along all integral divisors of~$X$ vanish (see~\cite[Lemma 4.2 and Theorem~6.1]{balmerGillePaninWalterGersten2002}). It is a birational invariant of smooth proper varieties (combine \cite[Proposition~2.1.8\,e)]{colliot-theleneBirationalInvariantsPurity1995} and \cite[Theorem~6.1]{balmerGillePaninWalterGersten2002}). For $n\geq 0$, one has $\Hr^0(X,\Ibf^n)=\Wr_{\nr}(X)\cap \Ir^n(\Rb(X))$ (this follows from \cite[Theorem 2.3]{morelMilnorConjectureQuadratic2005}, see also \cite[Theorem 2.25]{lerbetImageHigherSignature2026}). As a consequence, the group $\Hr^0(X,\Ibf^n)$ is a birational invariant of smooth proper varieties.

\begin{prop}
\label{propPfistercubic}
Let $X\subset \Pb^{d+1}_{\Rb}$ be a smooth cubic hypersurface of dimension~$d\geq 5$. 
Then $\sign^0_{d-1,X}:\Hr^0(X,\Ibf^{d-1})\to\Hr^0(X(\Rb),\Zb)$ is surjective. As a consequence, the cokernel of the signature morphism $\Wr_{\nr}(X)\to \Hr^0(X(\Rb),\Zb)$ is killed by~$2^{d-1}$.
\end{prop}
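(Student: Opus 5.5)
The plan is to run the diagram chase of \eqref{signsignev} with $n=d-1$, as in the proof of Proposition~\ref{prop:cokernel_cubic}, but now using that the evaluation map is surjective instead of having small image. The second assertion follows from the first. Indeed, $\Hr^0(X,\Ibf^{d-1})\subset\Wr_{\nr}(X)$ and $\sign_X=2^{d-1}\sign^0_{d-1,X}$ on it, so surjectivity of $\sign^0_{d-1,X}$ shows that the image of $\Wr_{\nr}(X)\to\Hr^0(X(\Rb),\Zb)$ contains $2^{d-1}\Hr^0(X(\Rb),\Zb)$. So it suffices to prove the first assertion.

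\textbf{Step 1.} Fix $\sigma\in\Hr^0(X(\Rb),\Zb)$. Since $d\geq 5$, Proposition~\ref{propcubic} shows that $\ev^{d-1}_{\nr,X}$ is surjective. Hence $\oev^{d-1}_{\nr,X}$ is surjective, and we choose $\gamma\in\Hr^{d-1}_\nr(X,\Zb/2)$ with $\oev^{d-1}_{\nr,X}(\gamma)=\sigma\bmod 2$. We choose $\gamma$ to come from an integral class $\tilde\gamma\in\Hr^{d-1}_\nr(X,\Zb(d-1))$, which the integral surjectivity provides.

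\textbf{Step 2.} We lift $\gamma$ to some $q\in\Hr^0(X,\Ibf^{d-1})$ along the top right arrow of \eqref{signsignev}.

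\textbf{Step 3.} Given such a $q$, the element $\sigma-\sign^0_{d-1,X}(q)$ is even by commutativity of \eqref{signsignev}. Write it as $2\tau$. By Corollary~\ref{cor:integral_cycle_class_map_split_surjective}, $\sign^0_{d,X}$ is surjective, so $\tau=\sign^0_{d,X}(q')$ for some $q'\in\Hr^0(X,\Ibf^{d})$. Then $\sign^0_{d-1,X}(q+q')=\sigma$, as wanted.

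The main obstacle is Step 2. The map $\Hr^0(X,\Ibf^{d-1})\to\Hr^0(X,\Hsc^{d-1}_X(\Zb/2))$ need not be surjective: the obstruction is the boundary $\partial\gamma\in\Hr^1(X,\Ibf^d)$ coming from \eqref{InHn}. I would analyse it through the sequence $0\to\Ibf^{d+1}\to\Ibf^d\to\Hsc^d_X(\Zb/2)\to0$, using that $\sign_{d+1}:\Ibf^{d+1}\to\iota_*\Zb$ is an isomorphism.

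The image of $\partial\gamma$ in $\Hr^1(X,\Hsc^d_X(\Zb/2))$ is, up to sign, the Bockstein-type differential of $\gamma$ (the $\Ibf$-cohomology incarnation of $Sq^1$-type operations). It should vanish because $\gamma$ lifts to the integral class $\tilde\gamma$; this is exactly why the integral surjectivity $\ev^{d-1}_{\nr,X}$ from Proposition~\ref{propcubic} is needed, rather than only the mod $2$ one. Then $\partial\gamma$ comes from $\Hr^1(X,\Ibf^{d+1})=\Hr^1(X(\Rb),\Zb)$. Its image there under $\sign^1$ equals the topological boundary of $\sigma\bmod2$ for $0\to\Zb\xrightarrow{2}\Zb\to\Zb/2\to0$, which is zero since $\sigma\bmod 2$ lifts to $\sigma$.

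What remains is the ambiguity coming from the image of $\Hr^0(X,\Hsc^d_X(\Zb/2))$ in $\Hr^1(X,\Ibf^{d+1})$. I would absorb it by modifying $\gamma$ by an element of $\ker(\oev^{d-1}_{\nr,X})$, using the canonical section of Corollary~\ref{cor:mod_2_cycle_class_map_surjective} to control the connecting maps. If this direct chase fails, the fallback is to use the birational invariance of $\Hr^0(X,\Ibf^{d-1})$ recalled above. One would pass to a birational model of $X$ (for instance via the standard rationality constructions available for cubics containing a line), on which the signature computation of Step 3 can be carried out with explicit Pfister-type forms.
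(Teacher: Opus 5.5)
There is a genuine gap, and it is exactly your Step~2. Your deduction of the second assertion from the first is fine, and so is Step~3. But Step~2 carries the whole content of the proposition.

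\textbf{Where the obstruction actually lives.} Combine the fiber square \eqref{eq:milnor_conjecture2} with the section of Theorem~\ref{thm:functorial_Krasnov}. This gives $\Ibf^d\simeq\iota_*\Zb\oplus\mathcal{K}$, with $\mathcal{K}:=\ker(\oev^d_{\nr})$, compatibly with $\sign_d$. Split $\partial\gamma\in\Hr^1(X,\Ibf^d)$ accordingly.
\begin{itemize}
\item Its signature component is the topological boundary of $\oev^{d-1}_{\nr,X}(\gamma)\in\Hr^0(X(\Rb),\Zb/2)$. This is always zero, since $\Hr^0(X(\Rb),\Zb)\to\Hr^0(X(\Rb),\Zb/2)$ is onto. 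So integrality of $\gamma$ plays no role there, and your last paragraph about an ambiguity to absorb is moot.
\item The only obstruction is the component of $\partial\gamma$ in $\Hr^1(X,\mathcal{K})$. This is the projection of $\delta(\gamma)$, where $\delta$ is the connecting map of $0\to\Hsc^d_X(\Zb/2)\to\Ibf^{d-1}/\Ibf^{d+1}\to\Hsc^{d-1}_X(\Zb/2)\to0$.
\end{itemize}

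\textbf{Why the integrality heuristic fails.} You assert without argument that $\delta$ kills classes lifting to $\Hr^{d-1}_{\nr}(X,\Zb(d-1))$. Integral liftability kills $Sq^1$-type Bocksteins, but $\delta$ is not such a Bockstein. There is not even an evident morphism of extensions relating $\Ibf^{n}/\Ibf^{n+2}$ to $\Zb/4$-coefficients. The reason is that symbols are additive only modulo $\Ir^{n+1}$, not modulo $\Ir^{n+2}$: already for $n=1$, $\langle1,-a\rangle+\langle1,-b\rangle-\langle1,-ab\rangle=\langle1,-a\rangle\otimes\langle1,-b\rangle$. Connecting maps of this kind are of $Sq^2$-type. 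For instance, on $\Hr^i(X,\Hsc^i_X(\Zb/2))=\CH^i(X)/2$, the analogous map to $\Hr^{i+1}(X,\Hsc^{i+1}_X(\Zb/2))$ is $Sq^2$ (Totaro). This does not vanish on integral classes: for the hyperplane class $h$ of $\Pb^2$, $Sq^2h=h^2\neq0$. Proposition~\ref{propcubic} only controls $\Hr^{d-1}_{\nr}$ and says nothing about $\delta$, so the chase does not close.

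\textbf{The fallback, and what the paper does.} Your fallback points toward the paper's route, but omits everything that makes it work, and a line does not suffice.
\begin{itemize}
\item The paper uses that $X$ contains a $2$-plane $\Pi$ when $d\geq5$. Projecting from $\Pi$ gives $\pi:\widetilde X\to\Pb^{d-2}_\Rb$ on the blowup, with fibers $\Pb^3$'s or quadric surfaces, whose real loci are empty or connected.
\item For each component $K$ of $\widetilde X(\Rb)$, Stone--Weierstrass on the base then gives $f$ separating $\pi(K)$ from $\pi(\widetilde X(\Rb)\setminus K)$. One sets $q_K:=\langle1,-f\rangle\otimes\langle1,1\rangle^{\otimes d-2}$ and uses birational invariance of $\Hr^0(-,\Ibf^{d-1})$.
\item The heart is unramifiedness. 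Along a divisor $D$ with $\ord_D(f)$ odd, the residue is a multiple of $\langle1,1\rangle^{\otimes d-2}$. Since $D(\Rb)=\varnothing$, the generic fiber of $D\to D':=\pi(D)$ is a smooth quadric surface $\langle1,a,b,c\rangle$ with $a,b,c$ totally positive.
\item One then has $\langle1,1\rangle^{\otimes d-2}=\langle1,a\rangle\otimes\langle1,b\rangle\otimes\langle1,c\rangle\otimes\langle1,1\rangle^{\otimes d-5}$ in $\Wr(\Rb(D'))$. Indeed, both lie in $\Ir^{d-2}$ with equal signatures, and $\dim D'\leq d-3$ forces such classes to coincide. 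This Pfister form becomes hyperbolic over $\Rb(D)$.
\end{itemize}
A line only gives a conic bundle over $\Pb^{d-1}_\Rb$. There $D'$ can have dimension $d-2$, and the comparison in $\Ir^{d-2}(\Rb(D'))$ is no longer justified. The hypothesis $d\geq5$ enters precisely through the existence of $\Pi$.
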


\begin{proof}
Since $d\geq 5$, one can use \cite[Proposition 1]{debarreSurIntersectionsCompletes2000} to find a $2$-dimensional linear subspace~$\Pi\subset X$. Let $\widetilde{\Pb^{d+1}_{\Rb}}\to \Pb^{d+1}_{\Rb}$ and $\widetilde{X}\to X$ be the blowups along $\Pi$, so $\widetilde{X}$ is the strict transform of $X$ in $\widetilde{\Pb^{d+1}_{\Rb}}$. Let $\pi':\widetilde{\Pb^{d+1}_{\Rb}}\to\Pb^{d-2}_{\Rb}$ and $\pi:\widetilde{X}\to\Pb^{d-2}_{\Rb}$ be the projections from $\Pi$. As $\pi'$ is a $\Pb^3$-bundle and $\Oc_{\widetilde{\Pb^{d+1}_{\Rb}}}(\widetilde{X})$ has degree~$2$ on the fibers of $\pi'$, each fiber of $\pi$ is isomorphic to a $\Pb^3$, or to a possibly singular quadric surface.

By birational invariance of $\Hr^0(X,\Ibf^{d-1})$, it suffices to show that 
$$\sign^0_{d-1,\widetilde{X}}:\Hr^0(\widetilde{X},\Ibf^{d-1})\to\Hr^0(\widetilde{X}(\Rb),\Zb)$$
is surjective. Fix a connected component $K\subset \widetilde{X}(\Rb)$. By definition of~$\sign^0_{d-1,\widetilde{X}}$, we must find ${q_K\in \Hr^0(\widetilde{X},\Ibf^{d-1})}$ whose signature, when viewed as an element of~$\Wr_{\nr}(\widetilde{X})$, is equal to $2^{d-1}$ on $K$ and to $0$ on the other connected components of~$\widetilde{X}(\Rb)$. 

We first construct $q_K$. The above description of the fibers of $\pi$ implies that the real loci of the real fibers of $\pi$ are empty or connected. The compact subsets~$\pi(K)$ and~${\pi(\widetilde{X}(\Rb)\setminus K)}$ of~$\Pb^{d-2}(\Rb)$ are therefore disjoint. Applying the Stone--Weierstrass theorem to an affine open neighborhood~$U\subset\Pb^{d-2}_{\Rb}$ of~$\Pb^{d-2}(\Rb)$ yields a rational function~${f}$ on~$\Pb^{d-2}_{\Rb}$ that is regular along~$\Pb^{d-2}(\Rb)$, negative on~$\pi(K)$, and positive on~${\pi(\widetilde{X}(\Rb)\setminus K)}$. Define~$q_K$ to be the Pfister form $\langle 1,-f\rangle\otimes\langle 1,1\rangle^{\otimes d-2}$. The form~$q_K$ belongs to $\Ir^{d-1}(\Rb(\widetilde{X}))$ and has the required signature by our choice of~$f$. It remains to prove that~$q_K$ belongs to the subgroup~${\Wr_{\nr}(\widetilde{X})}$.
 
Fix an integral divisor $D\subset \widetilde{X}$. Set~$F:=\Rb(D)$. Our goal is to show that the residue of~$q_K$ along~$D$ vanishes. If the order of vanishing $\ord_D(f)$ of $f\in\Rb(\widetilde{X})$ along~$D$ is even, this is automatic (see e.g.\,\cite[Remark 19.12]{elmanKarpenkoMerkurjevQuadraticForms2008}). We may therefore assume that~$\ord_D(f)$ is odd. Then the residue is a multiple of~$\langle 1,1\rangle^{\otimes d-2}$ (again by \cite[Remark 19.12]{elmanKarpenkoMerkurjevQuadraticForms2008}). We will show that this form vanishes in~$\Wr(F)$. 

Set $D':=\pi(D)$ and $F':=\Rb(D')$. As $\ord_D(f)$ is nonzero, the inclusion ${D'\subset \Pb^{d-2}_{\Rb}}$ is strict, so we have $\dim(D')\leq d-3$. It follows from \cite[Theorem~31.22 and Corollary~35.29]{elmanKarpenkoMerkurjevQuadraticForms2008} that any class in~$\Ir^{d-2}(F')$ whose signatures with respect to all field orderings of $F'$ vanish must be the trivial class. If $F'$ admits no field ordering, this implies that $\Ir^{d-2}(F')=0$, hence that $\langle 1,1\rangle^{\otimes d-2}$ already vanishes in $\Wr(F')$. We henceforth assume that $F'$ admits at least one field ordering.

Let $Q$ be the generic fiber of $\pi|_D:D\to D'$. It is an irreducible component of a fiber of $\pi$, so either it is a smooth quadric surface or one has $Q(F')\neq\varnothing$. We claim that~$Q$ has no point in any real closed extension of $F'$. As $F'$ admits a field ordering, it follows that $Q$ must be a smooth quadric surface. To prove the claim, note that~$f$ is invertible along~$\widetilde{X}(\Rb)$, so $D(\Rb)=\varnothing$. We deduce from the Tarski--Seidenberg transfer principle (see \cite[Proposition 5.2.3]{BochnakCosteRoy1998}) that~$D$ has no point in any real closed extension of $\Rb$. Hence $Q$ has no point in any real closed extension of~$F'$.
 
 Let $a,b,c\in (F')^\times$ be such that the form $\langle 1,a,b,c\rangle$ is an equation of $Q$. Since~$Q$ has no point in any real closed extension of $F'$, the elements~$a,b,c$ are positive with respect to all the field orderings of $F'$. The forms~$\langle1,a\rangle\otimes\langle1,b\rangle\otimes\langle1,c\rangle\otimes\langle1,1\rangle^{\otimes d-5}$ and~$\langle1,1\rangle^{\otimes d-2}$ therefore have identical signatures. Since both belong to $\Ir^{d-2}(F')$, they are equal in $\Wr(F')$. As the Pfister form $\langle1,a\rangle\otimes\langle1,b\rangle\otimes\langle1,c\rangle$ contains~$\langle 1,a,b,c\rangle$ as a subform, it is isotropic over $F'(Q)=F$, hence hyperbolic over $F$ (see \cite[Corollary 9.10]{elmanKarpenkoMerkurjevQuadraticForms2008}). Thus $\langle1,1\rangle^{\otimes d-2}$ vanishes in $\Wr(F)$, as required.
\end{proof}

\subsubsection{Products with \texorpdfstring{$\Pb^1$}{the projective line}}

The following lemma will enable us to carry exponent bounds under $\Pb^1$-suspension in the motivic sense. It is possible to deduce it from general principles of stable motivic homotopy theory; we prefer to give a direct proof to keep the paper more self-contained. 

\begin{lem}
\label{lem:P1_suspension_image_quadratic_real_cycle_class_map}
Let $c,d,n\geq 0$ be integers. Let $X$ be a smooth variety of dimension~$d$ over $\Rb$ and let $\Lc$ be a line bundle on $X$. Let $p:\Pb_X^1\to X$ be the structural morphism. Then $\sign_{n,X}^{c}$ is a retract of~$\sign_{n+1,\Pb_X^1}^{c+1}$, in the sense that there exists a diagram
\[\begin{tikzcd}
	{\Hr^{c}(X,\Ibf^{n}(\Lc))} & {\Hr^{c+1}(\Pb_X^1,\Ibf^{n+1}(p^*\Lc))} \\
	{\Hr^{c}(X(\Rb),\Zb(\Lc))} & {\Hr^{c+1}(\Pb_X^1(\Rb),\Zb(p^*\Lc))}
	\arrow["\alpha"', shift right=2, from=1-1, to=1-2]
	\arrow["{\sign_{n,X}^{c}}"', from=1-1, to=2-1]
	\arrow["\beta"', from=1-2, to=1-1]
	\arrow["{\sign_{n+1,\Pb_X^1}^{c+1}}", from=1-2, to=2-2]
	\arrow["{\alpha'}", shift left=2, from=2-1, to=2-2]
	\arrow["{\beta'}", from=2-2, to=2-1]
\end{tikzcd}\]
whose inner and outer squares commute and such that ${\beta\circ\alpha=\Id}$ and $\beta'\circ\alpha'=\Id$.
\end{lem}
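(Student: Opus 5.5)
The plan is to realize $\alpha$ and $\beta$ as the two halves of a projective bundle formula, namely pushforward along the zero section and restriction to a fibre away from the section. Let $s:X\to\Pb^1_X$ be the zero section, with normal bundle $N$ trivial, and let $U:=\Pb^1_X\setminus s(X)\simeq\Abb^1_X$, with inclusion $u$.

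\textbf{Construction of $\alpha$ and $\alpha'$.} Take $\alpha:=s_*$, the Gysin pushforward in $\Ibf$-cohomology. It comes from purity for $\Ibf$-cohomology \cite[Theorem 2.41]{hornbostelRealCycleClass2021}, which gives
\[
\Hr^c(X,\Ibf^n(\Lc))\simeq \Hr^{c+1}_{s(X)}(\Pb^1_X,\Ibf^{n+1}(p^*\Lc)),
\]
using the trivialization of the normal bundle to remove the twist by $\det N$. Composing with forgetting supports gives $\alpha$. Take $\alpha'$ to be the analogous map in singular cohomology, that is, the Thom isomorphism for the codimension-one submanifold $X(\Rb)\subset\Pb^1_X(\Rb)$ followed by forgetting supports. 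Compatibility of the signature with purity, in the form of the construction of the real cycle class map in \cite{hornbostelRealCycleClass2021}, gives $\sign\circ\alpha=\alpha'\circ\sign$. The shift from $\Ibf^n$ to $\Ibf^{n+1}$ matches, since the Thom class $\langle 1,-t\rangle$ has signature $2$ at points of the negative side and the normalization $\sign_{n+1}=\sign/2^{n+1}$ absorbs exactly this factor.

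\textbf{Construction of $\beta$ and $\beta'$.} Let $\infty:X\to\Pb^1_X$ be the section at infinity, which is disjoint from $s(X)$. The localization sequence reads
\[
\Hr^c(U,\Ibf^{n+1})\xrightarrow{\partial}\Hr^{c+1}_{s(X)}(\Pb^1_X,\Ibf^{n+1})\to\Hr^{c+1}(\Pb^1_X,\Ibf^{n+1})\to\Hr^{c+1}(U,\Ibf^{n+1}).
\]
Homotopy invariance \cite[Théorème 11.2.9]{faselGroupesChowWitt2008} gives $\Hr^*(U)\simeq\Hr^*(X)$ via $p$. The composite $\Hr^c(X)\to\Hr^c(U)\to\Hr^{c+1}_{s(X)}$ then vanishes, because classes pulled back from $X$ extend to all of $\Pb^1_X$, so $\alpha$ is injective.

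To define $\beta$, I would use the decomposition of $\Hr^{c+1}(\Pb^1_X,\Ibf^{n+1})$ coming from the exact sequence split by $\infty^*$ and $p^*$. Concretely, set $\beta:=(s_*)^{-1}\circ(\mathrm{id}-p^*\infty^*)$, projecting onto the image of $s_*$. For this to be well defined, one needs $\ker\infty^*$ on $\Pb^1_X$ to equal the image of $s_*$. This holds because $\Pb^1_X\setminus\infty(X)\simeq\Abb^1_X$, so restriction to this complement is identified with $\infty^*$ by homotopy invariance. The localization sequence for $\infty(X)$ then identifies $\ker$ of that restriction with the image of classes supported on $\infty(X)$. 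Since $s$ and $\infty$ are $\Abb^1$-homotopic via the $\mathrm{PGL}_2$-action, these supported classes match the image of $s_*$. I would redo this identification directly with $s$, arguing that $s_*=\infty_*$ after an automorphism of $\Pb^1$ acting trivially on $\Ibf$-cohomology.

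The same recipe defines $\beta'$ topologically. Both decompositions are natural in the coefficient sheaf, and the signature is a morphism of sheaves $\Ibf^{n+1}\to\iota_*\Zb$ commuting with $p^*$, $\infty^*$ and the purity isomorphisms. Hence the outer square commutes as well, and $\beta\circ\alpha=\Id$ and $\beta'\circ\alpha'=\Id$ hold by construction.

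\textbf{Main obstacle.} I expect the hardest step to be commutativity of the signature with the purity (Thom) isomorphisms, with the correct shift $n\mapsto n+1$, and twisted by $\Lc$. I would first try to cite the compatibility in \cite{hornbostelRealCycleClass2021} for twisted $\Ibf^j$-cohomology with supports. Failing that, I would check it locally on the Gersten resolutions by computing the residue of $\langle1,-t\rangle\cdot\varphi$, which is $\varphi$, and the jump of its signature across $t=0$. A secondary subtlety is the sign or automorphism issue in identifying $s_*$ with $\infty_*$. It can be sidestepped by working only with $s$ and the complement $U'=\Pb^1_X\setminus\infty(X)$, on which $s$ is an $\Abb^1$-homotopy inverse of the projection.
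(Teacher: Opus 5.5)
Your proposal is essentially the paper's proof with the roles of the zero section and the section at infinity exchanged. The paper takes $\alpha=i_*$ for the section $i$ at infinity and splits the restriction $j^*$ to $\Abb^1_X=\Pb^1_X\setminus i(X)$ by $\sigma=p^*\circ(r^*)^{-1}$, so that $\sigma\circ j^*=p^*\circ s^*$ in your notation. It then sets $\beta=\Id-\sigma\circ j^*$, viewed in $\im(i_*)$, does the same topologically, and gets the last square by a diagram chase from the compatibility of signatures with $i_*$, $j^*$ and $p^*$.

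The only flaw is in your justification of $\ker\infty^*=\im s_*$: restriction to $\Pb^1_X\setminus\infty(X)$ corresponds to $s^*$, not $\infty^*$. No $\mathrm{PGL}_2$ argument is needed. Since $\infty$ is a section of the $\Abb^1$-bundle $U=\Pb^1_X\setminus s(X)\to X$, homotopy invariance identifies restriction to $U$ with $\infty^*$. The localization sequence for $s(X)$ that you already wrote then gives $\ker\infty^*=\im s_*$.
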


\begin{proof}
Let $p:\Pb_X^1\to X$ and $q:\Pb_X^1\to \Pb^1_\Rb$ be the projections.
Let $i:X\to\Pb_X^1$ be the closed immersion induced by the point at infinity of $\Pb^1_{\Rb}$, and let $j:\Abb_X^1\hookrightarrow\Pb_X^1$ be the complementary open immersion. We also denote by $r:\Abb_X^1\to X$ the projection. 

Consider the localization long exact sequence in $\Ibf$-cohomology
{\small
\begin{equation*}
\cdots \to \Hr^c(X,\Ibf^n(\omega_{X/\Pb_X^1}\otimes\Lc))
\xrightarrow{i_*}
\Hr^{c+1}(\Pb_X^1,\Ibf^{n+1}(p^*\Lc))
\xrightarrow{j^*}
\Hr^{c+1}(\Abb_X^1,\Ibf^{n+1}(r^*\Lc))
\to\cdots
\end{equation*}
}
stemming from \cite[Theorem 2.41]{hornbostelRealCycleClass2021}. One computes that
$$\omega_{X/\Pb_X^1}=\omega_{X/\Rb}\otimes i^*\omega_{\Pb_X^1/\Rb}^{-1}=\omega_{X/\Rb}\otimes
i^*(p^*\omega_{X/\Rb}\otimes q^*\omega_{\Pb_\Rb^1/\Rb})^{-1}\simeq\Oc_X,$$
where the last isomorphism holds as $p\circ i=\Id$ and $q\circ i$ is constant. In addition, for all $m\geq 0$, since $r^*=j^*\circ p^*$ and $r^*:\Hr^m(X,\Ibf^{n+1}(\Lc))\to\Hr^m(\Abb_X^1,\Ibf^{n+1}(r^*\Lc))$ is an isomorphism by \cite[Théorème 11.2.9]{faselGroupesChowWitt2008}, the morphism $\sigma:=p^*\circ(r^*)^{-1}$ is a section of~$j^*:\Hr^m(\Pb_X^1,\Ibf^{n+1}(p^*\Lc))\to\Hr^m(\Abb_X^1,\Ibf^{n+1}(r^*\Lc))$ which is therefore surjective. It follows that the above localization exact sequence induces a short exact sequence
\begin{equation*}
0\to\Hr^c(X,\Ibf^n(\Lc))
\xrightarrow{i_*}
\Hr^{c+1}(\Pb_X^1,\Ibf^{n+1}(p^*\Lc))
\xrightarrow{j^*}
\Hr^{c+1}(\Abb_X^1,\Ibf^{n+1}(r^*\Lc))
\to0
\end{equation*}
which is split by $\sigma$. An entirely similar argument based on the localization exact sequence, on the Thom isomorphism, and on homotopy invariance for cohomology with locally constant coefficients yields a short exact sequence
\begin{equation*}
0\to\Hr^c(X(\Rb),\Zb(\Lc))
\xrightarrow{i(\Rb)_*}
\Hr^{c+1}(\Pb_X^1(\Rb),\Zb(p^*\Lc))
\xrightarrow{j(\Rb)^*}
\Hr^{c+1}(\Abb_X^1(\Rb),\Zb(r^*\Lc))
\to0
\end{equation*}
which is split by $\sigma':=p(\Rb)^*\circ (r(\Rb)^*)^{-1}$. 

Set $\alpha:=i_*$ and $\alpha':=i(\Rb)_*$ and define $\beta:=\Id-(\sigma\circ j^*)$ and ${\beta':=\Id-(\sigma\circ j(\Rb)^*)}$, so $\beta$ and $\beta'$ are the retractions of $\alpha$ and~$\alpha'$ induced by $\sigma$ and $\sigma'$. In the diagram
\[{
\small\begin{tikzcd}[row sep=1.5em, column sep=2em]
	0 & {\Hr^c(X,\Ibf^n(\Lc))}
	  & {\Hr^{c+1}(\Pb_X^1,\Ibf^{n+1}(p^*\Lc))}
	  & {\Hr^{c+1}(\Abb_X^1,\Ibf^{n+1}(r^*\Lc))} & 0 \\
	0 & {\Hr^c(X(\Rb),\Zb(\Lc))}
	  & {\Hr^{c+1}(\Pb_X^1(\Rb),\Zb(p^*\Lc))}
	  & {\Hr^{c+1}(\Abb_X^1(\Rb),\Zb(r^*\Lc))} & 0,
	\arrow[from=1-1, to=1-2]
	\arrow["\alpha"',shift right=2, from=1-2, to=1-3]
	\arrow["\beta"',from=1-3, to=1-2]
	\arrow["{\sign_{n,X}^c}"', from=1-2, to=2-2]
	\arrow["{j^*}"',shift right=2, from=1-3, to=1-4]
	\arrow["\sigma"',from=1-4, to=1-3]
	\arrow["{\sign_{n+1,\Pb_X^1}^{c+1}}"', from=1-3, to=2-3]
	\arrow[from=1-4, to=1-5]
	\arrow["{\sign_{n+1,\Abb_X^1}^{c+1}}"', from=1-4, to=2-4]
	\arrow[from=2-1, to=2-2]
	\arrow["{\alpha'}", shift left=2, from=2-2, to=2-3]
	\arrow["\beta'",from=2-3, to=2-2]
	\arrow["{j(\Rb)^*}",shift left=2, from=2-3, to=2-4]
	\arrow["\sigma'",from=2-4, to=2-3]
	\arrow[from=2-4, to=2-5]
\end{tikzcd}}
\]
the left outer square and the right inner and outer squares commute by functoriality of higher signature morphisms. A diagram chase then shows that the left inner square also commutes. This completes the proof of the lemma.
\end{proof}

\subsubsection{Higher signature morphisms with small image}

\begin{prop}
\label{propdeYaX}
Fix $0\leq c<d$ and $0\leq n< d$. Set $m:=\max(0,n-c)$. Let~$Y$ be a smooth variety of dimension $d-c$ over $\Rb$. Define $X:=Y\times(\Pb_\Rb^1)^{c}$. If the cokernel of $\sign_{m,Y}^0:\Hr^0(Y,\Ibf^m)\to \Hr^0(Y(\Rb),\Zb)$ is not killed by $2^{d-c-m-1}$, then the cokernel of $\sign_{n,X}^c:\Hr^c(X,\Ibf^n)\to \Hr^c(X(\Rb),\Zb)$  is not killed by~$2^{d-n-1}$.
\end{prop}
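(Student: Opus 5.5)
We argue by induction on $c$, applying Lemma \ref{lem:P1_suspension_image_quadratic_real_cycle_class_map} $c$ times, with an extra step to handle the case $n<c$. The basic observation is that a retract of a morphism has a retract cokernel. More precisely, in the situation of Lemma \ref{lem:P1_suspension_image_quadratic_real_cycle_class_map}, the maps $\alpha'$ and $\beta'$ induce maps between the cokernels of $\sign_{n,X}^c$ and $\sign_{n+1,\Pb^1_X}^{c+1}$. This uses the commutativity of the inner and outer squares, and the composition of the induced maps on cokernels is the identity since $\beta'\circ\alpha'=\Id$. Hence the cokernel of $\sign^c_{n,X}$ is a direct summand of the cokernel of $\sign^{c+1}_{n+1,\Pb^1_X}$. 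In particular, if the former is not killed by some integer $N$, neither is the latter.

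\emph{Case $n\geq c$.} Here $m=n-c$. Starting from $\sign^0_{m,Y}$ and applying the observation $c$ times (with $\Lc=\Oc$ and varieties $Y\times(\Pb^1_\Rb)^k$, which are smooth of dimension $d-c+k$), we find that the cokernel of $\sign^0_{m,Y}$ is a direct summand of the cokernel of $\sign^c_{m+c,X}=\sign^c_{n,X}$. The exponent bound is unchanged: $2^{d-c-m-1}=2^{d-n-1}$. This proves the claim.

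\emph{Case $n<c$.} Here $m=0$. The same iteration shows that the cokernel of $\sign^0_{0,Y}$ is a direct summand of the cokernel of $\sign^c_{c,X}$, so the latter is not killed by $2^{d-c-1}$. We then pass from $\Ibf^c$ to $\Ibf^n$ using the square \eqref{signdn}, applied with $c$ and $n$ in place of $d$ and $n$. This square still commutes for any $n\leq c$, since the inclusion $\Ibf^c\subseteq\Ibf^n$ multiplies signatures normalized by $2^{-c}$ versus $2^{-n}$ by $2^{c-n}$. It gives $\Im(\sign^c_{n,X})\supseteq 2^{c-n}\Im(\sign^c_{c,X})$, which goes in the wrong direction for us. We therefore need the reverse containment $\Im(\sign^c_{n,X})\subseteq 2^{c-n}\Im(\sign^c_{c,X})$, or at least enough of it to control exponents.

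For this, we would use \eqref{InHn} and \eqref{eq:milnor_conjecture} on $X$. For $k<c$, the map $\Hr^c(X,\Ibf^{k+1})\to\Hr^c(X,\Ibf^k)$ has cokernel injecting into $\Hr^c(X,\Hsc^k_X(\Zb/2))$. By the Bloch--Ogus (Gersten) resolution, this group vanishes when $k<c$, since the Gersten complex of $\Hsc^k$ lives in degrees $0,\dots,k$. Thus each map $\Hr^c(X,\Ibf^{k+1})\to\Hr^c(X,\Ibf^k)$ is surjective for $k<c$, and hence $\Hr^c(X,\Ibf^c)\to\Hr^c(X,\Ibf^n)$ is surjective. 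Combined with the commutative square, this gives $\Im(\sign^c_{n,X})=2^{c-n}\Im(\sign^c_{c,X})$.

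It remains to transfer the exponent. Pick $x\in\Hr^c(X(\Rb),\Zb)$ such that $2^{d-c-1}x\notin\Im(\sign^c_{c,X})$. We want $2^{d-n-1}x'\notin\Im(\sign^c_{n,X})$ for a suitable $x'$, and we take $x'=x$. Suppose instead that $2^{d-n-1}x=2^{c-n}y$ with $y\in\Im(\sign^c_{c,X})$. Then $2^{c-n}(2^{d-c-1}x-y)=0$. This forces $2^{d-c-1}x=y$ provided $\Hr^c(X(\Rb),\Zb)$ has no $2$-torsion, which would be a contradiction.

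The main obstacle is this torsion issue, since $\Hr^c(X(\Rb),\Zb)$ may contain $2$-torsion. We expect to handle it by the Künneth formula: $X(\Rb)=Y(\Rb)\times(S^1)^c$ contains the summand $\Hr^0(Y(\Rb),\Zb)\otimes\Hr^c((S^1)^c,\Zb)$, which is torsion-free, and the element $x$ produced by the retraction argument lies in it. Indeed, $\alpha'$ is the pushforward along $i$, which sends $\Hr^0(Y(\Rb),\Zb)$ into this summand. We then replace the torsion-freeness argument by working in this summand, which is a direct summand of $\Hr^c(X(\Rb),\Zb)$ by Künneth, and the argument above goes through there.
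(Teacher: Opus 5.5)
Your strategy is the paper's. For $n\geq c$ you use iterated retracts from Lemma~\ref{lem:P1_suspension_image_quadratic_real_cycle_class_map}. For $n<c$ you use the retract at the level of $\Ibf^c$ and the surjectivity of $\Hr^c(X,\Ibf^c)\to\Hr^c(X,\Ibf^n)$; your Bloch--Ogus justification, $\Hr^c(X,\Hsc^k_X(\Zb/2))=0$ for $k<c$, is a fine replacement for the reference the paper cites. This gives $\Im(\sign^c_{n,X})=2^{c-n}\Im(\sign^c_{c,X})$.

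The gap is in the torsion step. From $2^{d-n-1}x=2^{c-n}y$ with $y\in\Im(\sign^c_{c,X})$, the class $2^{d-c-1}x-y$ need not lie in your summand $S=\Hr^0(Y(\Rb),\Zb)\otimes\Hr^c((S^1)^c,\Zb)$. To use the torsion-freeness of $S$ you must project $y$ onto $S$. You then need the projection of $y$ to still lie in $\Im(\sign^c_{c,X})$. Knowing only that $S$ is a direct summand does not give this. As written, your argument does not exclude $y=2^{d-c-1}x+t$ with $t$ a torsion class in the complementary summand killed by $2^{c-n}$. In that case $2^{d-n-1}x\in\Im(\sign^c_{n,X})$ and no contradiction arises.

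The missing input is the commutation $\beta'\circ\sign^{c+1}_{n+1,\Pb^1_X}=\sign^c_{n,X}\circ\beta$ from Lemma~\ref{lem:P1_suspension_image_quadratic_real_cycle_class_map}.
\begin{itemize}
\item Let $\theta\colon\Hr^c(X(\Rb),\Zb)\to\Hr^0(Y(\Rb),\Zb)$ be the composite of the $c$ maps $\beta'$.
\item Write $x$ as the image of some $x_0$ under the composite of the maps $\alpha'$, where $2^{d-c-1}x_0\notin\Im(\sign^0_{0,Y})$.
\item Then $\theta(x)=x_0$, and the iterated commutation gives $\theta\bigl(\Im(\sign^c_{c,X})\bigr)\subseteq\Im(\sign^0_{0,Y})$.
\item Applying $\theta$ gives $2^{c-n}\bigl(2^{d-c-1}x_0-\theta(y)\bigr)=0$ in the torsion-free group $\Hr^0(Y(\Rb),\Zb)$.
\item Hence $2^{d-c-1}x_0=\theta(y)\in\Im(\sign^0_{0,Y})$, a contradiction.
\end{itemize}
This is exactly how the paper concludes, and it needs no K\"unneth formula.

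Your K\"unneth projection onto $S$ can be made to work. One checks that it agrees with the composite of the maps $\alpha'$ with $\theta$, and that composite maps $\Im(\sign^c_{c,X})$ into itself by the two squares of the lemma. But that identification is precisely the point that needed an argument.
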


\begin{proof}
Suppose first that $n\geq c$. Applying $c$ times Lemma \ref{lem:P1_suspension_image_quadratic_real_cycle_class_map} shows that $\sign_{n-c,Y}^0$ is a retract of $\sign_{n,X}^c$. Since the cokernel of $\sign_{n-c,Y}^0$ is not killed by $2^{d-n-1}$, we deduce that the cokernel of $\sign_{n,X}^c$ is not killed by $2^{d-n-1}$, as required.

Suppose now that $n<c$. Iterating Lemma~\ref{lem:P1_suspension_image_quadratic_real_cycle_class_map} yields a surjective~morphism
$$\theta:\Hr^c(X(\Rb),\Zb)\to\Hr^0(Y(\Rb),\Zb)$$
satisfying $\theta\bigl(\Im(\sign_{c,X}^c)\bigr)=\Im(\sign_{0,Y}^0)$ (to be precise, the morphism $\theta$ is the composition of $c$ of the morphisms $\beta'$ from Lemma~\ref{lem:P1_suspension_image_quadratic_real_cycle_class_map}). By \cite[(2-4)]{lerbetImageHigherSignature2026}, the inclusion $\Ibf^c\subseteq\Ibf^n$ induces a commutative~square
\begin{equation}
\label{signsignnc}
\begin{tikzcd}[row sep=small]
	{\Hr^c(X,\Ibf^c)} & {\Hr^c(X,\Ibf^n)} \\
	{\Hr^c(X(\Rb),\Zb)} & {\Hr^c(X(\Rb),\Zb)}
	\arrow[two heads, from=1-1, to=1-2]
	\arrow["{\sign_{c,X}^c}"', from=1-1, to=2-1]
	\arrow["{\,\sign_{n,X}^c}", from=1-2, to=2-2]
	\arrow["{\cdot 2^{c-n}}"', from=2-1, to=2-2]
\end{tikzcd}
\end{equation}
whose top horizontal arrow is surjective (see e.g.\,\cite[Lemma~2.6]{lerbetWittGroupsSmooth2026}). It follows from diagram~\eqref{signsignnc} and from the equality $\theta\bigl(\Im(\sign_{c,X}^c)\bigr)=\Im(\sign_{0,Y}^0)$ that
\begin{equation}
\label{thetasign}
\theta\bigl(\Im(\sign_{n,X}^c)\bigr)=2^{c-n}\,\theta\bigl(\Im(\sign_{c,X}^c)\bigr)=2^{c-n}\Im(\sign_{0,Y}^0).
\end{equation}

Since the cokernel of ${\sign_{0,Y}^0:\Wr_{\nr}(Y)\to\Hr^0(Y(\Rb),\Zb)}$ is not killed by $2^{d-c-1}$ and~$\Hr^0(Y(\Rb),\Zb)$ is torsion free, the cokernel of ${2^{c-n}\,\sign_{0,Y}^0}$ is not killed by $2^{d-n-1}$.
It follows from \eqref{thetasign} that $\Hr^0(Y(\Rb),\Zb)/\theta\bigl(\Im(\sign_{n,X}^c)\bigr)$ is not killed by $2^{d-n-1}$. As~$\theta$ is surjective, we deduce that the cokernel of~$\sign_{n,X}^c$ is not killed by~$2^{d-n-1}$.
\end{proof}

We finally prove Theorem \ref{thexpcokeroptimalintro} from the introduction.

\begin{thm}
\label{thexpcokeroptimal}
Fix $0\leq c\leq d$ and $0\leq n< d$. There exists a smooth variety $X$ of dimension~$d$ over~$\Rb$ such that the cokernel of 
$$\sign_{n,X}^c:\Hr^c(X,\Ibf^n)\to \Hr^c(X(\Rb),\Zb)$$ 
is not killed by~$2^{d-n-1}$. If~$d-c\leq 4$, one can moreover choose $X$ to be projective.
\end{thm}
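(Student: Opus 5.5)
The plan is to reduce everything to Proposition \ref{propdeYaX}, feeding it either a power of the punctured affine line (for the general statement) or a cubic hypersurface (for the projective statement). The case $c=d$ is not covered by Proposition \ref{propdeYaX}, since there $c<d$ is assumed, so we treat it separately.

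\emph{Case $c<d$.} Set $m:=\max(0,n-c)$ and note that $m<d-c$, since $n<d$. First take $Y:=\Gb_{m,\Rb}^{d-c}$. By Proposition \ref{propmultiplicative}, applied in dimension $d-c$ with $m$ in place of $n$, the cokernel of $\sign^0_{m,Y}$ is not killed by $2^{d-c-m-1}$. Proposition \ref{propdeYaX} then shows that $X:=Y\times(\Pb^1_\Rb)^c$ has the required property.

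For the projective assertion, assume $d-c\leq 4$. Take $Y\subset\Pb^{d-c+1}_\Rb$ to be a smooth cubic hypersurface of dimension $d-c$ with $Y(\Rb)$ disconnected, chosen very general when $d-c=4$. Since $m<d-c$, Proposition \ref{prop:cokernel_cubic} shows that the cokernel of $\sign^0_{m,Y}$ is not killed by $2^{d-c-m-1}$. As $X=Y\times(\Pb^1_\Rb)^c$ is projective, Proposition \ref{propdeYaX} again concludes.

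The main point to check in this case is the existence of such $Y$. One needs a real smooth cubic of each dimension $1\leq e\leq 4$ with disconnected real locus, and it must remain so after a small very general perturbation.
\begin{itemize}
\item I would use the affine-chart cubic $x_0 F=\prod_{i=1}^3(x_1-a_ix_0)+\epsilon G$, i.e.\ a perturbation of a plane cubic curve with two real components (an oval plus a pseudo-line), extended to higher dimensions. Concretely, take a cone-like cubic $x_{e+1}(\ldots)$ perturbed so that the real locus contains a small sphere component (an "oval") disjoint from the main one-sided component.
\item Alternatively, cite the known classification: for every $e\geq1$ there are smooth real cubic $e$-folds whose real locus has two components, the extra one being a sphere, e.g.\ Krasnov's or Finashin–Kharlamov's results. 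The simple example $x_{e+1}(x_1^2+\dots+x_e^2-x_0^2)\cdots$ also works.
\item Having two connected components is an open condition in the real parameter space. The very general locus is the complement of a countable union of nowhere dense subvarieties, so it meets every nonempty open set of real parameters. Hence a very general choice is possible when $e=4$.
\end{itemize}

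\emph{Case $c=d$.} Here $n<d=c$, and no $Y$ of dimension $0$ can be used directly. Instead I would take $X:=\Gb_{m,\Rb}\times(\Pb^1_\Rb)^{d-1}$, or for projectivity $X:=E\times(\Pb^1_\Rb)^{d-1}$ with $E$ a real elliptic curve with two real components; this matches the constraint $d-c\leq 4$ being satisfied. The argument is modelled on the $n<c$ branch of the proof of Proposition \ref{propdeYaX}, with the roles of degrees $c-1$ and $c$ shifted.
\begin{enumerate}
\item Iterate Lemma \ref{lem:P1_suspension_image_quadratic_real_cycle_class_map} $d-1$ times to get a surjection $\theta:\Hr^d(X(\Rb),\Zb)\to\Hr^1(Y(\Rb),\Zb)$, where $Y$ is the curve factor. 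It is compatible with images in the sense that $\theta\bigl(\Im(\sign^d_{d,X})\bigr)=\Im(\sign^1_{1,Y})$.
\item Use the surjectivity of $\Hr^d(X,\Ibf^d)\to\Hr^d(X,\Ibf^n)$ together with square \eqref{signsignnc} to get $\theta\bigl(\Im(\sign^d_{n,X})\bigr)=2^{d-n}\Im(\sign^1_{1,Y})$.
\item Observe that $\Hr^1(Y(\Rb),\Zb)$ is nonzero and torsion free, so its quotient by $2^{d-n}$ times a subgroup is not killed by $2^{d-n-1}$. Since $\theta$ is surjective, this lifts to the cokernel of $\sign^d_{n,X}$.
\end{enumerate}
This last case is where I expect the only genuine subtlety: one must verify the surjectivity of $\Hr^c(X,\Ibf^c)\to\Hr^c(X,\Ibf^n)$ when $c=\dim X$. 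That is exactly the cited Lemma 2.6 of Lerbet, so I expect it to go through.
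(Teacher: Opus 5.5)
For $c<d$ your argument is the paper's: apply Proposition~\ref{propdeYaX} to $Y=\Gb_{m,\Rb}^{d-c}$ (via Proposition~\ref{propmultiplicative}), and, when $d-c\leq 4$, to a very general smooth cubic of dimension $d-c$ with disconnected real locus (via Proposition~\ref{prop:cokernel_cubic}). The paper takes the existence of such cubics for granted. Your openness/Baire argument is the right justification for the very general case. However, the product $x_{e+1}(x_1^2+\dots+x_e^2-x_0^2)$ you list is reducible, hence singular, so it is not itself an example.

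For $c=d$ you take a different route, and one of your two choices fails. For $X=\Gb_{m,\Rb}\times(\Pb^1_\Rb)^{d-1}$ the curve factor has $\Hr^1(\Rb^\times,\Zb)=0$, so your step~3 is false. In fact $\Hr^d(X(\Rb),\Zb)=\Hr^d(\{\pm 1\}\times(S^1)^{d-1},\Zb)=0$, so the cokernel of $\sign^d_{n,X}$ vanishes.

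With $X=E\times(\Pb^1_\Rb)^{d-1}$ your argument is correct, and since this $X$ is projective it covers both assertions. In step~3 you need that $\Hr^1(E(\Rb),\Zb)\simeq\Zb^2$ is finitely generated, not merely torsion free. The passage to the curve through Lemma~\ref{lem:P1_suspension_image_quadratic_real_cycle_class_map} and $\theta$ is superfluous, however. The square \eqref{signsignnc} with $c=d$ already gives $\Im(\sign^d_{n,X})=2^{d-n}\Im(\sign^d_{d,X})\subseteq 2^{d-n}\Hr^d(X(\Rb),\Zb)$ for every smooth $X$ of dimension $d$. So any $X$ whose real locus has a compact orientable connected component works, since such a component contributes a direct summand $\Zb$ to $\Hr^d(X(\Rb),\Zb)$. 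The paper simply takes $X=(\Pb^1_\Rb)^d$, whose real locus is a $d$-torus. It then uses Corollary~\ref{cor:integral_cycle_class_map_split_surjective} to identify the cokernel exactly as $\Zb/2^{d-n}$, although the inclusion above already suffices for the non-killing statement.
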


\begin{proof}
If $c=d$, take $X:=(\Pb^1_{\Rb})^c$. As $\sign_{c,X}^c$ is surjective by Corollary~\ref{cor:integral_cycle_class_map_split_surjective}, diagram~\eqref{signsignnc} implies that the cokernel of $\sign_{n,X}^c$ is not killed by $2^{d-n-1}$.

Assume now that $c<d$. To prove the first assertion, apply Proposition \ref{propdeYaX} to~$Y:=\Gb_{m,\Rb}^{d-c}$ (this is legitimate by Proposition \ref{propmultiplicative}). To prove the second assertion, apply it to a very general smooth projective cubic hypersurface $Y$ of dimension~${d-c}$ over~$\Rb$ such that $Y(\Rb)$ is disconnected (use Proposition \ref{prop:cokernel_cubic}).
\end{proof}

\subsubsection{Applications to Chow--Witt groups}
\label{parChowWitt}

Let $X$ be a smooth variety over $\Rb$. Let~$\Lc$ be a line bundle on $X$.
The real cycle class map ${\widetilde{\cl}_{\Rb}:\widetilde{\CH}^c(X,\Lc)\to \Hr^c(X(\Rb),\Zb(\Lc))}$ for Chow--Witt groups (see \cite{faselGroupesChowWitt2008} for Chow--Witt groups and \cite[\S 3.D]{hornbostelRealCycleClass2021} and \cite[\S 2.5]{lerbetImageHigherSignature2026} for the real cycle class map in this context) factors as 
$$\widetilde{\CH}^c(X,\Lc)\twoheadrightarrow\Hr^c(X,\Ibf^c(\Lc))\xrightarrow{\sign_{c,X}^c} \Hr^c(X(\Rb),\Zb(\Lc)),$$
where the first arrow is surjective by \cite[Lemma 2.23]{lerbetImageHigherSignature2026}. Theorem~\ref{thexpcoker} applied with~$c=n$ therefore has the following consequence.

\begin{cor}
Let $X$ be a smooth variety of dimension $d$ over $\Rb$. Let $\Lc$ be a line bundle on $X$. For all $0\leq c\leq d$, the cokernel of the real cycle class map~${\widetilde{\cl}_{\Rb}:\widetilde{\CH}^c(X,\Lc)\to \Hr^c(X(\Rb),\Zb(\Lc))}$ is killed by~$2^{d-c}$.
\end{cor}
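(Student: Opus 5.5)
The plan is to reduce directly to Theorem \ref{thexpcoker} with $n=c$, using the factorization of the real cycle class map recalled just above the statement.

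First, I use that $\widetilde{\cl}_{\Rb}$ factors as the composition
$$\widetilde{\CH}^c(X,\Lc)\twoheadrightarrow\Hr^c(X,\Ibf^c(\Lc))\xrightarrow{\sign_{c,X}^c}\Hr^c(X(\Rb),\Zb(\Lc)),$$
where the first arrow is surjective by \cite[Lemma 2.23]{lerbetImageHigherSignature2026}. Because of this surjectivity, the image of $\widetilde{\cl}_{\Rb}$ coincides with the image of $\sign_{c,X}^c$. Consequently the two cokernels are equal as quotients of $\Hr^c(X(\Rb),\Zb(\Lc))$.

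Next, I apply Theorem \ref{thexpcoker} with $n:=c$. This is legitimate because $0\leq c\leq d$ is exactly the range allowed there for both indices. It says that the cokernel of $\sign_{c,X}^c$ is killed by $2^{d-c}$, and the corollary follows.

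The only point requiring any care is the factorization itself. One needs the real cycle class map to agree with the composition of the map $\widetilde{\CH}^c(X,\Lc)=\Hr^c(X,\Kbf^{\mathrm{MW}}_c(\Lc))\to\Hr^c(X,\Ibf^c(\Lc))$, induced by the sheaf morphism $\Kbf^{\mathrm{MW}}_c\to\Ibf^c$, with the higher signature morphism. This compatibility is how the real cycle class map is defined in \cite[\S 3.D]{hornbostelRealCycleClass2021} and \cite[\S 2.5]{lerbetImageHigherSignature2026}, so I will simply cite it, as the text preceding the statement already does. There is no real obstacle: the substance lies in Theorem \ref{thexpcoker}, ultimately in the canonical section from Corollary \ref{cor:integral_cycle_class_map_split_surjective}.
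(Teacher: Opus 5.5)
Your proof is correct and matches the paper's own argument. The surjection $\widetilde{\CH}^c(X,\Lc)\twoheadrightarrow\Hr^c(X,\Ibf^c(\Lc))$ from \cite[Lemma 2.23]{lerbetImageHigherSignature2026} makes the images of $\widetilde{\cl}_{\Rb}$ and $\sign^c_{c,X}$ coincide, and Theorem~\ref{thexpcoker} applied with $n=c$ then gives the bound $2^{d-c}$.
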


Similarly, taking $n=c$ in Theorem \ref{thexpcokeroptimal} yields the following corollary.

\begin{cor}
Fix $0\leq c<d$. There exists a smooth variety~$X$ of dimension $d$ over $\Rb$ such that the cokernel of the real cycle class map~${\widetilde{\cl}_{\Rb}:\widetilde{\CH}^c(X)\to \Hr^c(X(\Rb),\Zb)}$ is not killed by~$2^{d-c-1}$. If $d-c\leq 4$, one can moreover choose $X$ to be projective.
\end{cor}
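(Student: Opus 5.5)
This follows from the Chow--Witt factorization recalled in \S\ref{parChowWitt} together with Theorem \ref{thexpcokeroptimal} applied with $n=c$. Fix $0\leq c<d$. Taking $n:=c$, the hypothesis $0\leq n<d$ of Theorem \ref{thexpcokeroptimal} holds. That theorem therefore provides a smooth variety $X$ of dimension $d$ over $\Rb$ such that the cokernel of $\sign_{c,X}^c:\Hr^c(X,\Ibf^c)\to \Hr^c(X(\Rb),\Zb)$ is not killed by $2^{d-c-1}$. When $d-c\leq 4$, the theorem moreover allows $X$ to be chosen projective. Concretely, $X$ is $\Gb_{m,\Rb}^{d-c}\times(\Pb^1_\Rb)^c$ in general, and $Y\times(\Pb^1_\Rb)^c$ with $Y$ a very general smooth cubic of dimension $d-c$ with disconnected real locus in the projective case.

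Next I will use the factorization of the real cycle class map with trivial line bundle $\Lc=\Oc_X$:
$$\widetilde{\CH}^c(X)\twoheadrightarrow\Hr^c(X,\Ibf^c)\xrightarrow{\sign_{c,X}^c}\Hr^c(X(\Rb),\Zb).$$
The first arrow is surjective by \cite[Lemma 2.23]{lerbetImageHigherSignature2026}. Hence the image of $\widetilde{\cl}_{\Rb}$ equals the image of $\sign_{c,X}^c$, so the two cokernels coincide. In particular, the cokernel of $\widetilde{\cl}_{\Rb}$ is not killed by $2^{d-c-1}$, which is the assertion, including the projective refinement when $d-c\leq 4$.

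There is no real obstacle here. The only points to check are that the untwisted case $\Lc=\Oc_X$ is the one produced by Theorem \ref{thexpcokeroptimal}, which is stated without twist, and that the strict inequality $c<d$ matches the hypothesis $n<d$ of that theorem.
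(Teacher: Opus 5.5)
Your proposal is correct and is exactly the paper's argument: take $n=c$ in Theorem \ref{thexpcokeroptimal}, then use the surjectivity of $\widetilde{\CH}^c(X)\to\Hr^c(X,\Ibf^c)$ from \cite[Lemma 2.23]{lerbetImageHigherSignature2026}. That surjectivity shows the real cycle class map and $\sign_{c,X}^c$ have the same image, hence the same cokernel, and your concrete description of $X$ matches the proof of that theorem.
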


 {\footnotesize
\bibliographystyle{amsalpha}
\bibliography{references}}

\newcommand{\etalchar}[1]{$^{#1}$}
\providecommand{\bysame}{\leavevmode\hbox to3em{\hrulefill}\thinspace}
\providecommand{\MR}{\relax\ifhmode\unskip\space\fi MR }
\providecommand{\MRhref}[2]{%
  \href{http://www.ams.org/mathscinet-getitem?mr=#1}{#2}
}
\providecommand{\href}[2]{#2}
\begin{thebibliography}{BGPW02}

\bibitem[Bal05]{Balmer2005}
Paul Balmer, \emph{{W}itt groups}, Handbook of K-Theory, vol.~2, Springer,
  2005, pp.~539--576.

\bibitem[BCD{\etalchar{+}}25]{bachmannMilnorWittMotives2025}
Tom Bachmann, Baptiste Calm{\`e}s, Fr{\'e}d{\'e}ric D{\'e}glise, Jean Fasel,
  and Paul~Arne {\O}stv{\ae}r, \emph{Milnor--{{Witt}} motives}, vol. 311,
  Memoirs of the AMS, no. 1572, AMS, 2025.

\bibitem[BCR98]{BochnakCosteRoy1998}
Jacek Bochnak, Michel Coste, and Marie-Fran{\c c}oise Roy, \emph{Real algebraic
  geometry}, Ergeb. Math. Grenzgeb., vol.~36, Springer-Verlag, 1998.

\bibitem[Ben19]{benoistPeriodindexProblemReal2019}
Olivier Benoist, \emph{The period-index problem for real surfaces}, Publ. math.
  IH\'{E}S \textbf{130} (2019), no.~1, 63--110.

\bibitem[BGPW02]{balmerGillePaninWalterGersten2002}
Paul Balmer, Stefan Gille, Ivan Panin, and Charles Walter, \emph{The {Gersten}
  conjecture for {Witt} groups in the equicharacteristic case}, Doc. Math.
  \textbf{7} (2002), 203--217.

\bibitem[BL04]{birkenhakeLangeComplexAbelianVarieties2004}
Christina Birkenhake and Herbert Lange, \emph{Complex abelian varieties}, 2nd
  ed., Grundlehren Math. Wiss., vol. 302, Springer, 2004.

\bibitem[BO20]{benoistOttemFailureIHC2020}
Olivier Benoist and John~Christian Ottem, \emph{Failure of the integral {Hodge}
  conjecture for threefolds of {Kodaira} dimension zero}, Comment. Math. Helv.
  \textbf{95} (2020), no.~1, 27--35.

\bibitem[Br{\"o}80]{brockerReelleDivisoren1980}
Ludwig Br{\"o}cker, \emph{Reelle {D}ivisoren}, Archiv der Mathematik
  \textbf{35} (1980), 140--143.

\bibitem[BW02]{balmerWalterGerstenWitt2002}
Paul Balmer and Charles Walter, \emph{A {Gersten--Witt} spectral sequence for
  regular schemes}, Ann. Sci. \'ENS \textbf{35} (2002), no.~1, 127--152.

\bibitem[BW20]{benoistIntegralHodgeConjecture2020}
Olivier Benoist and Olivier Wittenberg, \emph{On the integral {{Hodge}}
  conjecture for real varieties, {{I}}}, Invent. math. \textbf{222} (2020),
  no.~1, 1--77.

\bibitem[BW24]{BWWu2024}
\bysame, \emph{{T}he {W}u relations in real algebraic geometry},
  \href{http://arxiv.org/abs/2405.11603}{arXiv:2405.11603}, to appear in Duke
  Math. J., 2024.

\bibitem[CO89]{colliot-theleneVarietesUnirationnellesNon1989}
Jean-Louis {Colliot-Th{\'e}l{\`e}ne} and Manuel Ojanguren,
  \emph{{Vari{\'e}t{\'e}s unirationnelles non rationnelles: au-del{\`a} de
  l'exemple d'Artin et Mumford}}, Invent. math. \textbf{97} (1989), no.~1,
  141--158.

\bibitem[Cox79]{coxEtaleHomotopyType1979}
David~A. Cox, \emph{The \'etale homotopy type of varieties over
  {{$\mathbb{R}$}}}, Proc. AMS \textbf{76} (1979), no.~1, 17--22.

\bibitem[CP90]{colliot-theleneRealComponentsAlgebraic1990}
Jean-Louis {Colliot-Th{\'e}l{\`e}ne} and Raman Parimala, \emph{Real components
  of algebraic varieties and étale cohomology}, Invent. math. \textbf{101}
  (1990), no.~1, 81--99.

\bibitem[CS96]{colliot-theleneZerocyclesCohomologyReal1996}
Jean-Louis {Colliot-Th{\'e}l{\`e}ne} and Claus Scheiderer, \emph{Zero-cycles
  and cohomology on real algebraic varieties}, Topology \textbf{35} (1996),
  no.~2, 533--559.

\bibitem[CT95]{colliot-theleneBirationalInvariantsPurity1995}
Jean-Louis Colliot-Th{\'e}l{\`e}ne, \emph{Birational invariants, purity and the
  {Gersten} conjecture}, $K$-Theory and Algebraic Geometry: Connections with
  Quadratic Forms and Division Algebras, Proc. Sympos. Pure Math., vol.~58,
  AMS, 1995, pp.~1--64.

\bibitem[CTV12]{colliot-theleneCohomologieNonRamifiee2012}
Jean-Louis Colliot-Thélène and Claire Voisin, \emph{Cohomologie non ramifiée
  et conjecture de {{Hodge}} entière}, Duke Math. J. \textbf{161} (2012),
  no.~5, 735--801.

\bibitem[DIK00]{degtyarevRealEnriquesSurfaces2000}
Alexander Degtyarev, Ilia Itenberg, and Viatcheslav Kharlamov, \emph{Real
  {{Enriques surfaces}}}, Lecture {{Notes}} in {{Math.}}, vol. 1746, Springer,
  2000.

\bibitem[DK96]{degtyarevHalvesRealEnriques1996}
Alexander Degtyarev and Viatcheslav Kharlamov, \emph{Halves of a real
  {{Enriques}} surface}, Comment. Math. Helv. \textbf{71} (1996), no.~1,
  628--663.

\bibitem[DM00]{debarreSurIntersectionsCompletes2000}
Olivier Debarre and Laurent Manivel, \emph{Sur les intersections compl{\`e}tes
  r{\'e}elles}, CRAS Paris S{\'e}r. I Math. \textbf{331} (2000), no.~11,
  887--892.

\bibitem[EKM08]{elmanKarpenkoMerkurjevQuadraticForms2008}
Richard Elman, Nikita Karpenko, and Alexander Merkurjev, \emph{The algebraic
  and geometric theory of quadratic forms}, AMS Colloquium Publications,
  vol.~56, AMS, 2008.

\bibitem[Fas08]{faselGroupesChowWitt2008}
Jean Fasel, \emph{{Groupes de Chow--Witt}}, {M{\'e}moires de la SMF}, no. 113,
  SMF, Paris, 2008.

\bibitem[FT19]{fu2CyclesHypersurfacesCubiques2019}
Lie Fu and Zhiyu Tian, \emph{$2$-cycles sur les hypersurfaces cubiques de
  dimension $5$}, Math. Z. \textbf{293} (2019), no.~1--2, 661--676.

\bibitem[GH81]{grossHarrisRealAlgebraicCurves1981}
Benedict~H. Gross and Joe Harris, \emph{Real algebraic curves}, Ann. Sci. \'ENS
  \textbf{14} (1981), no.~2, 157--182.

\bibitem[Gro57]{tohoku}
Alexandre Grothendieck, \emph{Sur quelques points d'alg\`ebre homologique},
  T\^ohoku Math. J. (2) \textbf{9} (1957), 119--221.

\bibitem[Hen25]{hennigRealCycleClass2025}
Jan Hennig, \emph{The real cycle class isomorphism for linear schemes},
  \href{http://arxiv.org/abs/2511.02549}{arXiv:2511.02549}, 2025.

\bibitem[Hor25]{hornbostelFewComputationsReal2025}
Jens Hornbostel, \emph{A few computations about the real cycle class map in low
  dimensions}, Q. J. Math. \textbf{76} (2025), no.~1, 185--194.

\bibitem[Huy23]{huybrechtsGeometryCubicHypersurfaces2023}
Daniel Huybrechts, \emph{The geometry of cubic hypersurfaces}, Cambridge
  {{Studies}} in {{Advanced Math.}}, vol. 206, Cambridge University Press,
  2023.

\bibitem[HWXZ21]{hornbostelRealCycleClass2021}
Jens Hornbostel, Matthias Wendt, Heng Xie, and Marcus Zibrowius, \emph{The real
  cycle class map}, Ann. $K$-Theory \textbf{6} (2021), no.~2, 239--317.

\bibitem[Jac17]{jacobsonRealCohomologyPowers2017}
Jeremy~A. Jacobson, \emph{Real cohomology and the powers of the fundamental
  ideal in the {{Witt}} ring}, Ann. $K$-Theory \textbf{2} (2017), no.~3,
  357--385.

\bibitem[Kne76]{Knebusch1976II}
Manfred Knebusch, \emph{{O}n algebraic curves over real closed fields. {II}},
  Math. Z. \textbf{151} (1976), no.~2, 189--205.

\bibitem[Kne77]{Knebusch1977}
\bysame, \emph{{S}ymmetric bilinear forms over algebraic varieties}, Conference
  on Quadratic Forms, Queen's Papers in Pure and Applied Mathematics, vol.~46,
  Queen's University, Kingston, Ontario, 1977, pp.~103--283.

\bibitem[KR26]{KharlamovRasdeaconu2026}
Viatcheslav Kharlamov and Rare{\c s} R{\u a}sdeaconu, \emph{{K}alinin
  effectivity and wonderful compactifications},
  \href{https://arxiv.org/abs/2603.10372}{arXiv:2603.10372}, 2026.

\bibitem[Kra91]{krasnovCharacteristicClasses1991}
Vyacheslav~A. Krasnov, \emph{Characteristic classes of vector bundles on a real
  algebraic variety}, Izv. Akad. Nauk SSSR Ser. Mat. \textbf{55} (1991), no.~4,
  716--746.

\bibitem[Kra94]{krasnovEquivariantGrothendieckCohomology1994}
\bysame, \emph{On the equivariant {Grothendieck} cohomology of a real algebraic
  variety and its application}, Izv. Ross. Akad. Nauk Ser. Mat. \textbf{58}
  (1994), no.~3, 36--52.

\bibitem[Laz04]{lazarsfeldPositivityII}
Robert Lazarsfeld, \emph{Positivity in algebraic geometry {II}}, Ergeb. Math.
  Grenzgeb., vol.~49, Springer-Verlag, 2004.

\bibitem[Ler26a]{lerbetImageHigherSignature2026}
Samuel Lerbet, \emph{On the image of higher signature maps}, Ann. $K$-Theory
  \textbf{11} (2026), no.~2, 171--212.

\bibitem[Ler26b]{lerbetWittGroupsSmooth2026}
\bysame, \emph{Witt groups of smooth real curves and surfaces},
  \href{http://arxiv.org/abs/2608.18599}{arXiv:2608.18599}, 2026.

\bibitem[Lew93]{lewisCylinderHomomorphisms1993}
James~D. Lewis, \emph{Cylinder homomorphisms and {C}how groups}, Math. Nachr.
  \textbf{160} (1993), 205--221.

\bibitem[Mah82]{Mahe1982}
Louis Mah{\'e}, \emph{{S}ignatures et composantes connexes}, Math. Ann.
  \textbf{260} (1982), no.~2, 191--210.

\bibitem[Mon97]{Monnier1997}
Jean-Philippe Monnier, \emph{{A}bout the image of the total signature map in
  the two-dimensional case}, manuscripta math. \textbf{93} (1997), no.~1,
  143--161.

\bibitem[Mon00]{Monnier2000}
\bysame, \emph{{U}nramified cohomology and quadratic forms}, Math. Z.
  \textbf{235} (2000), no.~3, 455--478.

\bibitem[Mor05]{morelMilnorConjectureQuadratic2005}
Fabien Morel, \emph{Milnor's conjecture on quadratic forms and mod 2 motivic
  complexes}, Rend. Sem. Mat. Univ. Padova \textbf{114} (2005), 63--101.

\bibitem[MvH98]{MangolteVanHamel1998}
Fr{\'e}d{\'e}ric Mangolte and Joost van Hamel, \emph{{A}lgebraic cycles and
  topology of real {E}nriques surfaces}, Compos. Math. \textbf{110} (1998),
  no.~2, 215--238.

\bibitem[OVV07]{orlovExactSequenceKM22007}
Dmitri Orlov, Alexander Vishik, and Vladimir Voevodsky, \emph{An exact sequence
  for {$K_*^M/2$} with applications to quadratic forms}, Ann. Math.
  \textbf{165} (2007), no.~1, 1--13.

\bibitem[Par92]{Parimala1992}
Raman Parimala, \emph{{A}lgebraic geometric methods in real algebraic
  geometry}, Real Algebraic Geometry, Lecture Notes in Math., vol. 1524,
  Springer, 1992, pp.~37--51.

\bibitem[Rei72]{reidCompleteIntersectionQuadrics1972}
Miles Reid, \emph{The complete intersection of two or more quadrics}, Ph.D.
  thesis, University of Cambridge, 1972.

\bibitem[Sil82]{silholRealAbelianVarieties1982}
Robert Silhol, \emph{Real abelian varieties and the theory of {C}omessatti},
  Math. Z. \textbf{181} (1982), no.~3, 345--364.

\bibitem[Sil89]{silholRealAlgebraicSurfaces1989}
\bysame, \emph{Real algebraic surfaces}, Lecture Notes in Math., vol. 1392,
  Springer-Verlag, 1989.

\bibitem[vH00a]{vanHamelAlgebraicCyclesTopology2000}
Joost van Hamel, \emph{Algebraic cycles and topology of real algebraic
  varieties}, CWI Tract, vol. 129, Stichting Mathematisch Centrum, Centrum voor
  Wiskunde en Informatica, 2000.

\bibitem[vH00b]{vanHamelTorsionZeroCycles2000}
\bysame, \emph{Torsion zero-cycles and the {Abel--Jacobi} map over the real
  numbers}, The Arithmetic and Geometry of Algebraic Cycles, CRM Proceedings \&
  Lecture Notes, vol.~24, AMS, Providence, RI, 2000, pp.~329--359.

\bibitem[Voi12]{voisinDegree4Unramified2012}
Claire Voisin, \emph{Degree 4 unramified cohomology with finite coefficients
  and torsion codimension 3 cycles}, Geometry and Arithmetic, EMS Series of
  Congress Reports, EMS, 2012, pp.~347--368.

\end{thebibliography}
\end{document}